\theoremstyle{plain}  % estilo de enunciados de teoremas de la ams
\newtheorem{theorem}{Theorem}[section] %[section]: Enumera seg�n la secci�n
\newtheorem{proposition}[theorem]{Proposition}
\newtheorem{lemma}[theorem]{Lemma}
\newtheorem{corollary}[theorem]{Corollary}
\theoremstyle{definition}
\theoremstyle{remark}
\newcommand{\R}{\mathbb{R}}  % Real numbers
\newcommand{\la}{\lambda}
\newcommand{\equ}[1]{(\ref{#1})}
\newcommand{\ve}{\varepsilon}
\newcommand{\be}{\begin{equation}}
\newcommand{\ee}{\end{equation}}
\newcommand{\bea}{\begin{eqnarray}}
\newcommand{\eea}{\end{eqnarray}}
\begin{document}

\title[Sign-changing solutions for supercritical Bahri-Coron's problem]{Sign-changing blowing-up solutions for supercritical Bahri-Coron's problem}

\author{Monica Musso}
\address{\noindent M. Musso - Departamento de Matem\'atica,
Pontificia Universidad Catolica de Chile, Avda. Vicu\~na Mackenna
4860, Macul, Chile}\email{mmusso@mat.puc.cl}

\author{Juncheng Wei}
\address{\noindent J. Wei -Department of Mathematics, University of British Columbia, Vancouver, BC V6T1Z2, Canada }\email{jcwei@math.ubc.ca}

\thanks{ The research of the first  author
has been partly supported by Fondecyt Grant 1120151. The research of the second author is partially supported by NSERC of Canada.}

\maketitle

\vskip 0.2cm \arraycolsep1.5pt
\newtheorem{Lemma}{Lemma}[section]
\newtheorem{Theorem}{Theorem}[section]
\newtheorem{Definition}{Definition}[section]
\newtheorem{Proposition}{Proposition}[section]
\newtheorem{Remark}{Remark}[section]
\newtheorem{Corollary}{Corollary}[section]

\maketitle

\noindent {\bf Abstract}:
Let $\Omega$ be a bounded domain in $\R^n$, $n\ge 3$ with smooth
boundary $\partial\Omega$ and a small hole. We give the first example  of sign-changing {\it bubbling} solutions to the
nonlinear elliptic problem $$
                        -\Delta u=|u|^{{n+2\over n-2} +\ve -1  } u \, \, \mbox{ in } \Omega , \quad \quad
                        u=0 \mbox{ on } \partial \Omega,
$$ where $\ve$ is a small positive
parameter.  The basic cell in the construction is the sign-changing nodal solution to the critical Yamabe problem
$$ -\Delta w = |w|^{\frac{4}{n-2}} w, \  \ w \in {\mathcal D}^{1,2} (\R^n) $$
which has large number ($3n$) of kernels.

%\vspace{3mm} \noindent {\bf Keywords}:
\vspace{3mm}

\maketitle

\section{Introduction}\label{intro}

Let $\Omega$ be a bounded domain in $\R^n$, $n\ge 3$ with smooth
boundary $\partial\Omega$. In this paper we establish  existence of a new type of {\it bubbling} solutions to the
nonlinear elliptic problem \bea
                        -\Delta u=|u|^{{n+2\over n-2} +\ve -1  } u \, \, \mbox{ in } \Omega , \quad \quad
                        u=0 \mbox{ on } \partial \Omega,
\label{P}\eea where $\ve$ is a small positive
parameter.

It is known that solvability for Problem
\begin{equation}
-\Delta u=|u|^{q-1} u \quad \mbox{ in }\Omega, \quad  \quad u=0 , \quad\mbox{ on } \partial \Omega ,\label{Q}
\end{equation}
is an elementary fact when $1< q< {n+2 \over n-2}$.
This is no longer the case for $q\ge \frac{n+2}{n-2}$
due to the loss of compactness of Sobolev embeddings. Our aim is to analyze
solutions exhibiting {\em bubbling behavior} to the above problem
when one lets the exponent $q$ approach $\frac{n+2}{n-2}$ from above.

Pohozaev \cite{Po} showed that
if $\Omega$ is strictly star-shaped then no solution of \equ{Q} exists
if  $q\ge \frac{n+2}{n-2}$.
In  contrast
 Kazdan and Warner \cite{KW} showed that,
if $\Omega$ is a radially symmetric annulus, $\Omega=\{ a<|x|<b\}$,
 there exists a
  radial positive  solution to Problem \equ{Q} for any exponent $q>1$.
Without symmetry the question is harder. This issue was
first considered by Coron
\cite{Co} who found
that \equ{Q} has a positive solution  when $q={n+2\over n-2}$  in any domain
exhibiting a small hole. Also a  second solution exists in Coron's setting, as shown in \cite{cw}, see also the results in \cite{cw1,cmp} and reference therein. The most general result concerning existence of positive solutions to \equ{Q} for $q={n+2\over n-2}$ is obtained by Bahri and Coron \cite{BC}: if
 some homology
group of $\Omega$ with coefficients in ${\bf Z}_2$ is not trivial,
then \equ{Q} has at least one positive solution, in particular in any
three-dimensional domain which is not contractible to a point.
Examples showing that this condition is actually not necessary for
solvability were found by Dancer \cite{Da}, Ding \cite{Di} and Passaseo \cite{Pa1}, for $q={n+2 \over n-2}$ and also for very super critical powers $q\geq {n+1 \over n-3} > {n+2 \over n-2}$, see \cite{Pa2}.
The question of existence
for super-critical powers close to critical has been addressed in \cite{dfm,dfm1,dfm2,pr,bengr}, where
existence of {\it positive} solutions
to \equ{Q} is established. These solutions become {\em unbounded}
as the exponent $q \downarrow \frac{n+2}{n-2}$ and they develop a {\it blowing-up} profile.

By a {\em blowing-up solution} for $\equ{Q}$ near the critical
exponent we mean an unbounded  sequence of solutions $u_j$ of \equ{Q} for
 $q=q_j \to {n+2\over n-2}$.
Setting
$$
M_j = \max_\Omega |u_j | =  |u_j (x_j ) |\to + \infty
$$
we see then that the
scaled function $$ v_j (y) =
M_j\, u_j ( x_j + M_j^{{(q_j -1)/ 2}}\, y), $$
 satisfies
$$
\Delta v_j  + |v_j|^{q_j -1 } v_j  = 0
$$
in the expanding domain $\Omega_j = M_j^{{(q_j -1)/ 2}}(\Omega - x_j)$.
Assuming for instance that $x_j$ stays away from the boundary of $\Omega$,
elliptic regularity implies that locally over compacts around the origin,
$v_j$ converges up to subsequences to a  solution of
\begin{equation}
\label{limiteq}
\Delta w + |w|^{4 \over n-2} w = 0 \quad {\mbox {in}} \quad \R^n.
\end{equation}
Back to the original variable,  ``near $x_j$''
the behavior of $u_j (y)$ can be approximated as
\begin{equation}\label{pp0}
u_j (x) \sim {1 \over M_j} \, w \left( {x-x_j \over M_j^{q_j -1 \over 2}} \right)
\end{equation}
If the solution $u_j$ develops a {\it positive} bubbling around $x_j$, then the limit profile \equ{limiteq} is necessarily positive.
It is known,
see \cite{CGS,obata}, that for the convenient choice
$\alpha_n = (n(n-2))^{n-2\over 4}$, this
solution is explicitly given by
\begin{equation}
\bar w(z)= \alpha_n \left( { 1 \over { 1 + |z|^2}}\right)^{n-2\over 2} \kern -5pt .
\label{expsol}\end{equation}
which corresponds precisely to an extremal of $\sigma_n$,
the best constant in the critical Sobolev embedding,
\begin{equation}
\sigma_n  \, = \, \inf_{ u\in C_0^1(\R^n )\setminus\{0\}} \,
{\int_{\R^n}    |\nabla u|^2
\over
(\, \int_{\R^n}    |u|^{2n\over n-2}  )^{n-2\over n} }  ,
\label{SN}\end{equation} see
\cite{aubin,talenti}.
Thus, a solution blowing-up positively  near $x_j$ looks at main order as
\begin{equation}
u_j (x) \sim \alpha_n \left ( { 1 \over { 1+ M_j^{4\over n-2}\, |x-x_j|^2
}}\right )^{n-2\over 2}\kern -5pt M_j .
\label{pp}\end{equation}
In \cite{dfm,dfm1} this issue has been addressed
for a class of domains which includes that considered by Coron
in \cite{Co}.
It is established that a {\it positive}  solution to \equ{Q}
exists for $q=\frac{n+2}{n-2}+\ve $ with any small $\ve>0$, or equivalently to \equ{P},
if for instance
$\Omega $ is a smooth domain exhibiting a sufficiently small hole: considering $\ve$ as a
small parameter, the solution  exhibits single-bubbling
around exactly two points
and ceases to exist when $\ve=0$.
More precisely,
let $ {\mathcal D}$ be a bounded, smooth domain in $\R^n$, $n \ge 3$,
and $P$ a point of ${\mathcal D}$. Let us consider
the domain
\begin{equation} \Omega = {\mathcal D} \setminus
\overline B(P ,\delta ) \label{om}\end{equation} where $\delta >0$ is a small number.
Then
there exists a $\delta_0 >0$, which depends on $ {\mathcal D}$ and the point
$P$
 such that if $0< \delta <\delta_0 $ is fixed and $\Omega$ is the domain given
 by \equ{om},
 then the following holds:
There exists $\ve_0 >0$ and a solution $u_\ve$, $0<\ve<
\ve_0$ of \equ{P}  of the form
\begin{equation}
u_\ve (x) =
 \sum_{j=1}^2 \alpha_n \left (
 { 1 \over
 1\, +\,  \ve^{- \frac 2 {n-2}}\Lambda_{j\ve }^{-2}
\, |x-\xi_{j}^\ve |^2      }\right )^{n-2\over 2}  \Lambda_{j\ve }^{n-2\over 2}
\ve^{1\over 2}\, (\, 1 \, + \, o(1)\, ),
\label{exp}
\end{equation}
where
$o(1) \to 0 $ uniformly as $\ve\to 0$.
The numbers $\Lambda_{j\ve} $ and the points $\xi_{j}^\ve$ converge (up to subsequences)
to a
critical point of certain function built upon the Green's function
of $\Omega$.

Another kind of construction for positive solutions to \equ{P} has been recently proposed by Vaira \cite{vaira}: if $\Omega$ is such that Problem \equ{P} at $\ve =0$ admits a positive non-degenerate
solution $u_0$, then Problem \equ{P} has a solution that at main order looks like the sum of $u_0$ and a blowing-up profile as the one described in \equ{pp}.

Not much is known about sign-changing solutions to \equ{P}, in fact as far as we know no existence results are available in the literature. One may ask for existence of sign-changing solutions with a blowing-up profile  like the one described in \equ{pp},  with a minus sign in front,
namely sign-changing solution  blowing-up {\it negatively} at one or more points in $\Omega$.
Unfortunately,
sign-changing solutions blowing-up  {\it negatively} at one point or at two points do not exist, as shown
in \cite{benbou}.

The purpose of this work is to give the first construction of blowing-up sign-changing solutions for \equ{P}. In fact, we show the existence of solutions with the shape described in \equ{pp0},
where $w$ is a sign-changing solution to the limit profile \equ{limiteq}. Not much is known about sign-changing solutions  to \equ{limiteq}, being the only available results   \cite{D,dmpp1,dmpp2}.  Furthermore, in order to perform a gluing construction as the one described for positive solutions, an important property of the solution $w$ to the limit problem \equ{limiteq} is needed: its non-degeneracy.

In  \cite{dmpp1} it is proven that  there exists an integer $k_0$ such that for any integer $k \geq k_0$, a solution
solution $Q = Q_k$ to Problem
\begin{equation}
\label{eqqq}
\Delta u + |u|^{p-1} u  = 0 \quad {\mbox {in}} \quad \R^n, \quad p={n+2 \over n-2},
\end{equation}
exists. Furthermore, if we define  the energy by
\begin{equation} \label{energy1}
E(u) = {1\over 2} \int_{\R^n} |\nabla u|^2 \, dx - {1\over p+1} \int_{\R^n} |u|^{p+1} \, dx ,
\end{equation}
we have
$$
E (Q_k ) =  \left\{ \begin{matrix}   (k+1) \, S_n \, \left( 1+ O(k^{2-n})  \right)  & \hbox{ if } n\geq 4\, , \\ & \\     (k+1) \, S_3 \, \left( 1+ O(k^{-1} |\log k|^{-1} \right) & \hbox{ if } n=3 \end{matrix}
  \right.
$$
as $k \to \infty$,
where $S_n$ is a positive constant, depending on $n$. The solution $Q=Q_k$ decays at infinity like the fundamental solution, namely
\begin{equation}
\label{vanc1}
\lim_{|x| \to \infty} |x|^{n-2} \, Q_k (x ) = \left[ {4\over n(n-2) } \right]^{n-2 \over 4} \, 2^{n-2 \over 2} \left( 1+ c_k \right)
\end{equation}
where
$$
 c_k = \left\{ \begin{matrix}   O(k^{-1} )  & \hbox{ if } n\geq 4\, , \\ & \\     O(k^{-1} |\log k|^2 )  & \hbox{ if } n=3 \end{matrix}
  \right. \ \quad {\mbox {as}} \quad k \to \infty.
$$
Furthermore, the solution $Q=Q_k$ has a positive global non degenerate maximum at $x=0$. To be more precisely we have
\begin{equation}
\label{at0}
Q(x) = \left[ n (n-2) \right]^{n-2 \over 4} \, \left ( 1- {n-2 \over 2} |x|^2 + O(|x|^3 ) \right) \quad {\mbox {as}} \quad |x|\to 0,
\end{equation}
and also there exists $\eta >0$, depending on $k_0$, but independent of $k$,  so that
\begin{equation}
\label{at00} \eta \leq Q(x) \leq Q(0) \quad {\mbox {for all}} \quad |x|\leq {1\over 2},
\end{equation}
for any $k$.
Another property for  the solution $Q=Q_k$ is that it is invariant under rotation of angle ${2\pi \over k}$ in the $x_1 , x_2$ plane, namely
\begin{equation}
\label{sim00}
Q( e^{2\pi \over k} \bar x , x' ) = Q(\bar x , x' ), \quad \bar x= (x_1, x_3) , \quad x'= (x_3, \ldots , x_n ).
\end{equation}
It is even in the $x_j$-coordinates, for any $j=2, \ldots , n$
\begin{equation}
 Q (x_1,\ldots,x_j, \ldots, x_{n}  ) =  Q (x_1,\ldots,-x_j, \ldots, x_{n}  ),\quad  j=2,\ldots,n.
\label{sim22}\end{equation}
It respects invariance under Kelvin's transform:
\begin{equation}
 Q (x )\  = \  |x|^{2-n} Q (|x|^{-2} x)\ .
\label{sim33}\end{equation}
A detailed description of these solutions is given in Appendix \ref{A}. These solutions are non-degenerate, as proved in \cite{mw}, in the sense precised in Section \ref{des}. More precisely the dimensional of the kernels of the linearized operator at $Q$
$$ -\Delta \phi = p |Q|^{p-1} \phi $$
 is shown to be $3n$.

\medskip
In this paper we show that, if the domain $\Omega$ has a small hole, like in \cite{dfm}, then a large number of sign-changing solutions to \equ{P} exist: they blow-up with a profile $Q$ near two points of the domain, and they converges to $0$, as $\ve \to 0$, far from these two points.

\medskip
We have the validity of the following result

\begin{theorem}
\label{teo} Let ${\mathcal D}$ be a smooth bounded connected domain in $\R^n$ containing the origin $0$. There exists $\delta_0$ such that, if $\delta \in (0, \delta_0 )$ is fixed
and $\Omega$ is the set defined by
 $\Omega = {\mathcal D} \setminus \omega$, for any  smooth domain
$\omega \subset \bar B (0, \delta ) \subset {\mathcal D}$, then there exists a sequence
of $\ve_k$ such that, for any $\ve \in (0, \ve_k )$ there exists a sign changing solution $u_\ve$ to \equ{P}, with
$$
u_\ve (x ) = \sum_{j=1}^2 Q\left( {x-\xi_{j \ve} \over \Lambda_{j\ve}^2 \ve^{2\over n-2}} \right) \, \Lambda_{j\ve}^{n-2 \over 2} \ve^{1\over 2} \, \left( 1+ o(1) \right)
$$
where
$o(1) \to 0 $ uniformly as $\ve\to 0$. Up to subsequence,
$$\lim_{\ve \to 0} \Lambda_{j \ve} = \Lambda_j \in (0,\infty), \quad
\lim_{\ve \to 0} \xi_{j\ve } = \xi_j \in \Omega, \quad {\mbox {with}} \quad
\xi_1 \not= \xi_2 .
$$
Furthermore
$$
{1\over 2} \int_{\Omega } |\nabla u_\ve |^2 \, dx -{n-2 \over 2n +\ve (n-2) } \int_{\Omega }
|u_\ve |^{ {2n \over n-2} +\ve } = 2\, (k+1) \, S_n + O(\ve |\ln \ve |).
$$
\end{theorem}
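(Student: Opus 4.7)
\medskip
\noindent\textbf{Proof plan.} I would implement a Lyapunov-Schmidt finite-dimensional reduction adapted to the sign-changing bubble $Q=Q_k$. For parameters $\xi=(\xi_1,\xi_2)\in \Omega^2$ with $\xi_1\ne \xi_2$ staying a definite distance apart and away from $\partial\Omega$, and $\Lambda=(\Lambda_1,\Lambda_2)$ in a compact subset of $(0,\infty)^2$, define
\[
U_{j}(x)=\Lambda_j^{(n-2)/2}\,\ve^{1/2}\,Q\!\left(\frac{x-\xi_j}{\Lambda_j^2\,\ve^{2/(n-2)}}\right),\qquad j=1,2,
\]
and let $PU_j\in H^1_0(\Omega)$ be its Dirichlet projection. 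Thanks to the sharp decay \equ{vanc1}, the correction $PU_j-U_j$ is at leading order expressible through the Green's function $G$ and the Robin function $H$ of $\Omega$. The ansatz is $u=PU_1+PU_2+\phi$, with $\phi$ sought small in a weighted $L^\infty$ norm $\|\cdot\|_*$ adapted to the decay of $Q$.

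\medskip
With $q=p+\ve$ the equation \equ{P} becomes $\mathcal L_\ve \phi = N(\phi)+R_\ve$, where $\mathcal L_\ve\phi:=-\Delta\phi-q|PU_1+PU_2|^{q-1}\phi$, $R_\ve$ is the ansatz error, and $N(\phi)$ collects the higher-order terms. The crucial linear step is to invert $\mathcal L_\ve$ modulo the $6n$-dimensional space generated by the derivatives of $U_j$, $j=1,2$, with respect to the $3n$ kernel parameters at $Q$. This is made possible by the non-degeneracy proved in \cite{mw}: the only bounded solutions of the limit linearized equation $-\Delta\Phi=pQ^{p-1}\Phi$ are the $3n$ explicit kernel elements. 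A standard blow-up-and-contradiction argument then yields $\|\phi\|_*\le C(\|R_\ve\|_{**}+\|N(\phi)\|_{**})$ uniformly in $(\xi,\Lambda)$, and the direct bound $\|R_\ve\|_{**}=O(\ve^{1-\tau})$ for every $\tau>0$, combined with contraction mapping, produces a unique $\phi=\phi(\xi,\Lambda,\ve)$ with $\|\phi\|_*=O(\ve^{1-\tau})$ depending smoothly on the parameters. This step is where I expect the main obstacle: the $3n$-dimensional kernel per bubble is considerably larger than the $n+1$ present in the positive case, so the blow-up classification must simultaneously exclude all $3n$ spurious limits, which forces the systematic use of the full symmetry group \equ{sim00}--\equ{sim33} together with the kernel classification of \cite{mw}; additionally, the nonsmoothness of $|PU_1+PU_2|^{q-1}$ across the nodal set of $Q$ must be absorbed by the weights defining $\|\cdot\|_*$ and $\|\cdot\|_{**}$.

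\medskip
I would then expand the reduced functional $F_\ve(\xi,\Lambda):=E_\ve(PU_1+PU_2+\phi(\xi,\Lambda,\ve))$. Using \equ{vanc1} and the symmetries \equ{sim00}--\equ{sim33}, the self-energy of each bubble, the interaction of each bubble with its projection correction, and the cross-interaction between the two bubbles are computed explicitly, leading to an expansion of the form
\[
F_\ve(\xi,\Lambda)=2(k+1)S_n+\ve\,\Psi(\xi,\Lambda)+o(\ve),
\]
where $\Psi$ is a Kirchhoff-Routh type function combining $H(\xi_j,\xi_j)\Lambda_j^{n-2}$, $G(\xi_1,\xi_2)(\Lambda_1\Lambda_2)^{(n-2)/2}$ and a logarithmic contribution in $\Lambda_j$ coming from the supercritical exponent, in perfect analogy with \cite{dfm,dfm1}. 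Critical points of $F_\ve$ correspond to genuine solutions of \equ{P}. When $\Omega={\mathcal D}\setminus\omega$ has a small hole, the topology of $\Omega$ produces a nontrivial min-max critical level of $\Psi$ which remains in the admissible region (no bubble collision, no escape of $\xi_j$ to $\partial\Omega$, no $\Lambda_j\to 0,\infty$); this critical level persists under the $o(\ve)$ perturbation and yields the sign-changing solution with the asymptotic profile and energy stated in the theorem. The sequence $\ve_k\downarrow 0$ arises because the min-max level may a priori only be attained along suitable subsequences, exactly as in \cite{dfm}.
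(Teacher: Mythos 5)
Your overall architecture (Dirichlet projection of the bubbles, orthogonality against the approximate kernel, linear invertibility via the non-degeneracy of $Q$ from \cite{mw}, contraction mapping, variational reduction, and a min-max exploiting the hole as in \cite{dfm}) coincides with the paper's, but there is a genuine gap at the reduction step, and it is exactly the point the paper singles out as its main difficulty: counting parameters. You correctly impose orthogonality against the full $6n$-dimensional approximate kernel (you must: if you projected only onto the $n+1$ translation/dilation directions per bubble, the blow-up argument in the linear theory would fail, since the bounded solutions of $\Delta\Phi+p|Q|^{p-1}\Phi=0$ form a $3n$-dimensional space), so the projected problem carries $6n$ multipliers $c_{\alpha j}$. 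But your ansatz, and hence your reduced functional $F_\ve(\xi,\Lambda)$, involves only the $2n+2$ parameters $(\xi_1,\xi_2,\Lambda_1,\Lambda_2)$. Criticality of $F_\ve$ tests the equation only against the derivatives of the ansatz in those directions, which at main order span the kernel elements $z_0,z_1,\dots,z_n$ for each bubble; it therefore annihilates only roughly $2(n+1)$ of the $6n$ multipliers, while those attached to the rotation elements $z_{n+1}$, $z_{n+l+1}$, $z_{2n+l-1}$ and the Kelvin elements $z_{n+2}$, $z_{n+3}$ remain uncontrolled. A critical point of your $F_\ve$ is thus not a solution of \equ{P}, and no choice of $(\xi,\Lambda)$ alone can repair this.

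The paper closes this gap by enlarging the family of approximate solutions: each bubble is $Q_A$ as in \equ{va1ok}, parametrized by $A=(\la,\xi,a,\theta)\in\R^+\times\R^n\times\R^2\times\R^{2n-3}$, i.e. dilation, translation, the Kelvin parameter $a$ and the rotations $\theta$, which gives exactly $3n$ parameters per bubble; by \equ{z011}--\equ{z511} the derivatives of $Q_A$ in all components of $A$ reproduce the whole kernel, so that (Lemma \ref{unolemma}) criticality of the reduced energy $I(A_1,A_2)$ in all $6n$ variables is equivalent to $c_{\alpha j}=0$ for all $\alpha,j$. This enlargement has a downstream cost your sketch does not anticipate: the reduced function $\Psi$ in \equ{psi1} then depends on $a_i,\theta_i$ through the factors $Q^2(\hat a_i)$ and $Q(\hat a_1)Q(\hat a_2)$ with $\hat\xi_i=R_{\theta_i}\xi_i$, $\hat a_i=R_{\theta_i}a_i$, and the min-max of \cite{dfm} must be carried out in the enlarged set $\R_+^2\times W^l_\rho\times B^2\times K^2$, using that $\Psi$ is maximized in the $a$ and $\theta$ variables, a boundary transversality property (Lemma \ref{num.2}), and the cohomological argument of Proposition \ref{liv-cri}. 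With the extra $4n-2$ parameters included, the rest of your plan (linear theory, contraction, energy expansion — note also the constant $\ve\log\ve$ term, harmless for criticality, in Lemma \ref{paris}) does match the paper's proof.
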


\medskip
Theorem \ref{teo} exhibits new concentration phenomena in which  the basic cell of bubbling is not the positive solution. In the positive bubbling case, the kernel at the basic cell always contains $n+1$ dimensional kernels which corresponds to precisely the translation and scaling. For sign-changing bubbling solution, which is non-radial, the kernel at the basic cell contains not only the translations and scaling but also effects of rotation and Kelvin transform. The main difficulty is to find enough parameters to adjust. This is the main contributions of this paper. As we discover, the dominating role played is still the translation and scaling.

\medskip
We mention related results on sign-changing bubbling solutions. For scalar curvature type equations, examples of sign-changing blowing-up solutions are constructed by Robert-Vetois  in \cite{RV1}-\cite{RV2}. There negative bubbles are added to a positive solution to the Yamabe problem. The basic cell is still the single positive solution. Sign-changing bubbling solutions are constructed in the slightly subcritical problems ($\epsilon <0$) by Pistoia and Vetois \cite{PV}. The basic cell is a combination of positive and negative solution.

\medskip
In Section \ref{des} we precise the notion of {\it non-degeneracy} for $Q$. In Section \ref{des1} we describe the projection of the function $Q$ into $H^1_0 (\Omega )$ and
we give the expansion of the energy associated to the sum of two projected copies of $Q$.
Section \ref{scheme} is devoted to explain the construction of our solution and the scheme of the proof.

\section{About the non-degeneracy  of the basic cell}\label{des}

In \cite{mw}, we proved that these solutions are {\it non degenerate}.
To explain this, let us fix one solution $Q=Q_k$ of the family and define  the linearized equation around $Q$ for Problem \equ{eqqq} as follows
\begin{equation}
\label{defL}
L(\phi ) = \Delta \phi + p |Q|^{p-1}  \phi .
\end{equation}
The invariances \equ{sim00}, \equ{sim22}, \equ{sim33}, together with the natural invariance of any solution
to \equ{eqqq} under translation (if $u$ solves \equ{eqqq} then also $u (x+\xi)$ solves \equ{eqqq} for any $\xi \in \R^n$) and under dilation (if $u$ solves \equ{eqqq} then $\la^{-{n-2 \over 2} } u(\la^{-1} x)$ solves \equ{eqqq} for any $\la >0$) produce some {\it natural} functions $\varphi$  in the kernel of $L$, namely
$$
L(\varphi ) = 0.
$$
These are the $3n$  linearly independent functions we introduce next:
\begin{equation}
\label{capitalzeta0}
z_0 (x) = {n-2 \over 2} Q(x) + \nabla Q (x) \cdot x ,
\end{equation}
\begin{equation}
\label{capitalzetaj}
z_\alpha (x)  = {\partial \over \partial x_\alpha  } Q(x) , \quad {\mbox {for}} \quad \alpha=1, \ldots , n,
\end{equation}
and
\begin{equation}
\label{capitalzeta2}
z_{n+1} (x) = -x_2  {\partial \over \partial x_1 } Q(x) + x_1  {\partial \over \partial x_2  } Q(x),
\end{equation}
\begin{equation}
\label{chico1}
z_{n+2} (x) = -2 x_1 z_0 (x) + |x|^2 z_1 (x) , \quad z_{n+3} (x) =  -2 x_2 z_0 (x) + |x|^2 z_2 (x)
\end{equation}
and, for $l=3, \ldots , n$
\begin{equation}
\label{chico2}
z_{n+l+1} (x) = -x_l z_1 (x) + x_1 z_l (x), \quad z_{2n+l-1} (x) =  -x_l z_2 (x) + x_2 z_l (x).
\end{equation}
Indeed, a direct computation gives that
$$
L(z_\alpha ) = 0 , \quad {\mbox {for all}} \quad \alpha = 0 , 1 , \ldots , 3n-1.
$$
The function $z_0$ defined in \equ{capitalzeta0} is related to the invariance of Problem \equ{eqqq} with respect to dilation $\la^{-{n-2 \over 2} } Q (\la^{-1} x)$. The functions $z_i$, $i=1, \ldots , n$,  defined in \equ{capitalzetaj} are related to the invariance of Problem \equ{eqqq} with respect to translation $Q (x+\xi )$.
The function $z_{n+1}$ defined in \equ{capitalzeta2} is related to the invariance of $Q$ under rotation in the
$(x_1 , x_2)$ plane. The two  functions $z_{n+2}$ and $z_{n+3}$ defined in \equ{chico1} are related to the invariance of Problem \equ{eqqq} under Kelvin transformation \equ{sim33}. The functions defined in \equ{chico2} are related to the invariance under rotation in
the $(x_1 , x_l)$ plane and in the $(x_2 , x_l)$ plane respectively.

Let us be more precise. Denote by $O(n)$ the orthogonal group of $n\times n$  matrices $M$ with real coefficients, so that $M^T M = I$, and
by $SO(n)  \subset O(n) $ the special orthogonal group of all matrices in $O(n)$ with $det M=1$. $SO(n)$ is the group of all rotations in $\R^n$, it is a compact
group, which can be identified with a compact set in $\R^{n (n-1) \over 2}$.
Consider the sub group  $\hat S$ of $SO(n)$ generated by  rotations in the $(x_1 , x_2)$-plane, in the $(x_j , x_\alpha)$-plane, for any $j=1,2$ and $\alpha = 3, \ldots , n$. We have that $\hat S$ is compact and can be identified with a compact manifold of dimension  $2n-3$, with no boundary.
In other words, there exists a smooth injective map
$\chi : \hat S \to \R^{{n(n-1) \over 2}}$ so that $\chi ( \hat S)$ is a compact manifold of dimension $2n-3$ with no boundary and $\chi^{-1} : \chi (\hat S ) \to \hat S$ is a smooth parametrization of $\hat S$  in a neighborhood of the Identity. Thus  we write
$$
\theta \in K =  \chi (\hat S ), \quad
R_\theta = \chi^{-1} (\theta )
$$
where $K$  a compact manifold of dimension $2n-3$ with no boundary and $R_\theta$
denotes a rotation in $\hat S$.

Let $A = (\la ,  \xi , a,  \theta ) \in \R^+ \times \R^n \times \R^2 \times \R^{2n-3}$,
and define
\begin{equation}
\label{defthetaA}
\Theta_A (x) = \la^{-{n-2 \over 2}} \left| \eta_{\la, \xi , a} (x)  \right|^{2-n} \,
Q \left( { R_\theta \left( {x-\xi \over \la }  - a |{x-\xi \over \la }|^2 \right) \over | \eta_{\la, \xi , a} (x)|^2 }\right),
\end{equation}
where
\begin{equation}
\label{defetaA}
\eta_{\la, \xi , a} (x) = {x-\xi \over |x-\xi |} - a {|x-\xi | \over \la }
\end{equation}
and $Q$ is our fixed non degenerate solution to Problem \equ{eqqq} described above.
In \cite{DKM} it is proven that for  any choice of $A$, the function $\Theta_A$ is still a solution of \equ{eqqq}, namely
$$
\Delta \Theta_A + |\Theta_A |^{p-1} \Theta_A = 0 , \quad {\mbox {in}} \quad \R^n.
$$
For any set of parameters $A = (\la ,  \xi , a,  \theta ) \in \R^+ \times \R^n \times \R^2 \times \R^{2n-3}$, we introduce the function
\begin{eqnarray}
\label{va1ok}
Q_A (x) & = & \Theta_A (R_\theta^{-1} x) \nonumber \\
& = &   \la^{-{n-2 \over 2}} \left| \eta_{\la, \xi , a} ( R_\theta^{-1} x )  \right|^{2-n} \,
Q \left( { R_\theta \left( {R_\theta^{-1} x -\xi \over \la } - a |{R_\theta^{-1} x-\xi \over \la }|^2 \right) \over | \eta_{\la, \xi , a} (R_\theta^{-1} x)|^2 }\right)
\end{eqnarray}
More explicitly
$$
Q_A (x) = \la^{-{n-2 \over 2}} \left| {x- R_\theta \xi \over |x- R_\theta \xi |} - R_\theta a {|x- R_\theta \xi | \over \la } \right|^{2-n} \, \times
$$
$$
\times
Q \left( {  {x-R_\theta \xi \over \la } - R_\theta a |{x- R_\theta \xi \over \la }|^2 \over 1-2 R_\theta a \cdot ({x-R_\theta \xi \over \la } ) + |a|^2 |{x-R_\theta \xi \over \la }|^2 }\right).
$$
Easy but long computations give the following {\it natural} relations between $z_\alpha$ and  differentiation of $Q_A$ with respect to each component of $A$. More precisely, one has
\begin{equation}
\label{z011}
z_0 (y) = - \, {\partial \over \partial \la} \left[ Q_A (x) \right]_{| \la = 1, \xi = 0, a =0 , \theta=0 }
\end{equation}
\begin{equation}
\label{zj11}
z_\alpha  (y) = - \, {\partial \over \partial \xi_\alpha } \left[ Q_A (x) \right]_{| \la = 1, \xi = 0, a =0 , \theta=0 },
\quad \alpha = 1, \ldots , n,
\end{equation}
 \begin{equation}
\label{z211}
z_{n+2} (y) =  \, {\partial \over \partial a_1} \left[ Q_A (x) \right]_{| \la = 1, \xi = 0, a =0 , \theta=0 }
\end{equation}
 \begin{equation}
\label{z311}
z_{n+3} (y) =  \, {\partial \over \partial a_2} \left[ Q_A (x) \right]_{| \la = 1, \xi = 0, a =0 , \theta=0 }.
\end{equation}
Now, let $\theta = (\theta_{12}, \theta_{13}, \ldots , \theta_{1n}, \theta_{23} , \ldots , \theta_{2n} ) $, where
$\theta_{ij}$ represents the rotation in the $(i,j)$-plane. Then
 \begin{equation}
\label{z411}
z_{n+1} (y) =  \, {\partial \over \partial \theta_{12}} \left[ Q_A (x) \right]_{| \la = 1, \xi = 0, a =0 , \theta=0 }
\end{equation}
and, for any $l=3, \ldots , n$,
 \begin{equation}
\label{z511}
z_{n+l+1} (y) =  \, {\partial \over \partial \theta_{1l}} \left[ Q_A (x) \right]_{| \la = 1, \xi = 0, a =0 , \theta=0 }
\end{equation}
 \begin{equation}
\label{z511}
z_{2n+l-1} (y) =  \, {\partial \over \partial \theta_{2l}} \left[ Q_A (x) \right]_{| \la = 1, \xi = 0, a =0 , \theta=0 }.
\end{equation}

Following \cite{DKM}, a solution $Q$ is said to be non degenerate if
\begin{equation}
\label{nondeg} {\mbox {Kernel}} (L) = {\mbox {Span}} \{ z_\alpha \, : \, \alpha = 0 , 1 , 2, \ldots , 3n-1 \},
\end{equation}
or equivalently, any bounded (or any solution in ${\mathcal D}^{1,2}$) of $L(\varphi ) = 0 $ is a linear combination of the functions $z_\alpha $, $\alpha = 0 , \ldots , 3n-1$.

In \cite{mw} we proved that, under certain condition on the dimension $n$, the solution $Q$ is non-degenerate.
Indeed, in \cite{mw} we showed that in all dimensions $n\leq 48$, any solution $Q=Q_k$
is non degenerate in the sense defined above. If dimension $n \geq 49$, our result \cite{mw}  guarantees the existence of a subsequence of solutions $Q_{k_j}$ each one of which is non degenerate.

\section{First approximation and expansion of the energy}\label{des1}

The existence of a non degenerate solution in ${\mathcal D}^{1,2} (\R^n )$ of \equ{eqqq} is the basic element for our construction. In fact, we can perform our construction starting from any one of the infinitely many solutions $Q=Q_k$
with the property of being non degenerate.
Thus, from now on,  we  fix a function $Q=Q_k$, and for simplicity of notation we drop the index $k$.

The function $Q_A$ defined in \equ{va1ok} will be the building block of our construction. We first correct it so that it satisfies zero boundary condition on $\partial \Omega$.
This is done defining  $P Q_A$ to be the projection of $Q_A$ onto $H^1_0 (\Omega)$, namely the  unique solution to
\begin{equation}
\label{va2} \Delta u = \Delta Q_A \quad {\mbox {in}} \quad \Omega, \quad u = 0 \quad {\mbox {on}} \quad
\partial \Omega.
\end{equation}
In other words, $P Q_A = Q_A - \varphi_A$ where $\varphi_A$ solves
\begin{equation}
\label{va3} \Delta u = 0 \quad {\mbox {in}} \quad \Omega, \quad u = Q_A \quad {\mbox {on}} \quad
\partial \Omega.
\end{equation}
Next Lemma provides a precise description of $P Q_A$ and $\varphi_A$, when $\la \to 0$.
To state the result we need to recall the following.  Let
us denote by $G(x,y)$ the Green's function of the domain, namely
$G$ satisfies \begin{equation}
\label{green} \Delta_x G(x,y) = \delta(x-y) ,\quad x\in \Omega
, \quad G (x,y) = 0 ,\quad x\in
\partial \Omega , \end{equation} where $\delta (x)$ denotes the Dirac mass at
the origin. We denote by $H(x,y)$ its regular part, namely \begin{equation}
\label{acca}
H(x,y) = \Gamma (x-y) -G(x,y)
\end{equation} where $\Gamma$ denotes the
fundamental solution of the Laplacian, \begin{equation}
\label{fonda} \Gamma (x) = b_n
|x|^{2-n} ,
\end{equation} so that $H$ satisfies $$ \Delta_x H(x,y) = 0 ,\quad
x\in \Omega , \quad H(x,y) = \Gamma (x-y) ,\quad x\in \partial
\Omega . $$ Its {\em diagonal} $H(x,x)$ is usually called Robin function.

\medskip

\begin{lemma} \label{expaPQ}
Assume that $a \in \R^2$, $\xi \in \R^n $, and $\theta \in K$ are fixed so that
 $R_\theta \xi \in \Omega$.
We have the validity of the following estimates:
\begin{equation}
\label{va3}
\varphi_A (x) = b_n^{-1} \, \la^{n-2 \over 2} \, Q \left(  R_\theta a \right) \, H(x, R_\theta \xi ) + O(\la^{n\over 2} )
\end{equation}
uniformly for $x \in \Omega$, as $\la \to 0$. Furthermore,
\begin{equation}
\label{va4}
PQ_A (x) = b_n^{-1} \, \la^{n-2 \over 2} \, Q  \left(  R_\theta a \right) \, G(x, R_\theta \xi ) + O(\la^{n\over 2} )
\end{equation}
uniformly for $x$ in compact sets of $\Omega \setminus \{ R_\theta \xi \}$, as $\la \to 0$. In \equ{va3} and \equ{va4}, $H$ and $G$ are the functions defined respectively in \equ{acca} and \equ{green}, $b_n$ is a positive constant defined in \equ{fonda}, and
$O(1)$ denotes a smooth function of $x$ which is uniformly bounded as $\la \to 0$.

\end{lemma}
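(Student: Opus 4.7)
The plan is to exploit the Kelvin invariance \equ{sim33} of $Q$ together with the M\"obius-type structure of $\Theta_A$ to rewrite \equ{va1ok} in the compact form
\begin{equation}
\label{QAformula}
Q_A(x) = \lambda^{\frac{n-2}{2}}\, |x - R_\theta \xi|^{2-n}\, Q\left( \frac{\lambda (x - R_\theta \xi)}{|x - R_\theta \xi|^{2}} - R_\theta a \right).
\end{equation}
To see \equ{QAformula}, set $v = (R_\theta^{-1}x - \xi)/\lambda$ and write the argument of $Q$ in \equ{defthetaA} as $R_\theta T_a(v)$ with $T_a(v)=(v-a|v|^2)/|\eta_{\lambda,\xi,a}(R_\theta^{-1}x)|^2$. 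One checks algebraically that $T_a$ is the Kelvin inversion $K\colon y \mapsto y/|y|^2$ composed with translation by $-a$ and then with $K$ again, so that $T_a(v)=K(K(v)-a)$. Applying the Kelvin invariance \equ{sim33} in the form $Q(K(y))=|y|^{n-2}Q(y)$, and using that rotations commute with $K$ and preserve norms, the prefactor $|\eta|^{2-n}$ is exactly cancelled by the $|K(v)-a|^{n-2}=|\eta|^{n-2}/|v|^{n-2}$ arising from Kelvin invariance, yielding \equ{QAformula}.

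With \equ{QAformula} in hand the boundary expansion is immediate. For $x\in\partial\Omega$, the distance $|x-R_\theta\xi|$ is uniformly bounded away from $0$ (since $R_\theta\xi\in\Omega$), so the argument of $Q$ in \equ{QAformula} tends to $-R_\theta a$ at the uniform rate $O(\lambda)$. A Taylor expansion of the smooth function $Q$, combined with the symmetries \equ{sim00}--\equ{sim22} that identify $Q(-R_\theta a)$ with $Q(R_\theta a)$, gives
\begin{equation}
\label{QAbdy}
Q_A(x) = b_n^{-1}\,\lambda^{\frac{n-2}{2}}\, Q(R_\theta a)\, \Gamma(x - R_\theta \xi) + O(\lambda^{\frac{n}{2}}) \quad\text{on } \partial\Omega.
\end{equation}

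Next I would set $\psi_A(x) := b_n^{-1}\lambda^{(n-2)/2}Q(R_\theta a)\, H(x,R_\theta\xi)$. By \equ{acca}, $\psi_A$ is harmonic in $\Omega$ and agrees on $\partial\Omega$ with $b_n^{-1}\lambda^{(n-2)/2}Q(R_\theta a)\,\Gamma(\cdot-R_\theta\xi)$, which is the principal part of \equ{QAbdy}. Since both $\varphi_A$ and $\psi_A$ are harmonic in $\Omega$, their difference is harmonic with boundary data of size $O(\lambda^{n/2})$, and the maximum principle yields $\|\varphi_A-\psi_A\|_{L^\infty(\Omega)} = O(\lambda^{n/2})$, which is \equ{va3}. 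For \equ{va4}, I apply \equ{QAformula} pointwise on any compact subset of $\Omega\setminus\{R_\theta\xi\}$ to obtain the analog of \equ{QAbdy} for $Q_A$, subtract the expansion \equ{va3} of $\varphi_A$, and use $G = \Gamma - H$ from \equ{acca}.

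The main obstacle is establishing the compact identity \equ{QAformula}: it is not transparent from \equ{va1ok} that $Q_A$ admits such a clean Kelvin--translate--Kelvin representation, and the cancellation of the $|\eta|$ factors rests essentially on the Kelvin invariance of $Q$. Once \equ{QAformula} is in hand, the remainder of the argument reduces to a standard boundary Taylor expansion followed by a harmonic comparison through the maximum principle.
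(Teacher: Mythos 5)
Your proof is correct, and the closed-form identity you derive for $Q_A$ is valid: writing the argument of $Q$ in \equ{va1ok} as a Kelvin--translate--Kelvin composition and using \equ{sim33} does cancel the prefactor $|\eta|^{2-n}$ exactly, giving $Q_A(x)=\lambda^{\frac{n-2}{2}}\,|x-R_\theta\xi|^{2-n}\,Q\bigl(\lambda(x-R_\theta\xi)/|x-R_\theta\xi|^{2}-R_\theta a\bigr)$, as one can confirm by the direct computation $|X-R_\theta a|X|^2|^2=|X|^2\,|\eta|^2$. From there your argument (Taylor expansion on $\partial\Omega$, comparison of harmonic functions via the maximum principle, and subtraction using $G=\Gamma-H$ from \equ{acca}) has the same skeleton as the paper's proof; the genuine difference is how the boundary expansion of $Q_A$ is obtained. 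The paper expands the argument of $Q$ and the prefactor term by term in powers of $1/(|a|\,|X|)$ (its \equ{vancu1}--\equ{vancu5}) and only invokes Kelvin invariance at the very end, to convert $|a|^{2-n}Q(-R_\theta a/|a|^2)$ into $Q(R_\theta a)$; you instead use the exact Kelvin identity up front, reducing the whole expansion to a one-line Taylor development of the smooth function $Q$ near $-R_\theta a$. Your route is shorter and, notably, uniform in $a$ on the whole range $|a|\le\frac12$, including $a=0$, where the paper's intermediate formulas (which divide by $|a|^2$) degenerate. One shared caveat: both arguments ultimately replace $Q(-R_\theta a)$ by $Q(R_\theta a)$ --- via \equ{sim33} this is exactly the identity the paper asserts --- and this rests on an antipodal/reflection symmetry of $Q$ in the $(x_1,x_2)$-variables that does not follow from \equ{sim22} alone; it would be worth making this symmetry explicit, but it is not a gap of yours relative to the paper.
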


\begin{proof} Let $x \in \partial \Omega$ and let $\delta >0 $ so that dist $(x, R_\theta \xi ) > \delta$.
Define
  $X = {x-R_\theta \xi \over \la }$. The argument of the function $Q$ in \equ{va1ok} gets re-written as
\begin{eqnarray*}
{  {x-R_\theta \xi \over \la } - R_\theta a |{x- R_\theta \xi \over \la }|^2  \over 1-2 R_\theta a \cdot ({x-R_\theta \xi \over \la } ) + |a|^2 |{x-R_\theta \xi \over \la }|^2 } & = & {  X - R_\theta a |X|^2  \over 1-2 R_\theta a \cdot X  + |a|^2 |X|^2 }
\end{eqnarray*}
 Observe that $X={x-R_\theta \xi \over \la} \to \infty $ uniformly on $x \in \partial \Omega$, as $\la \to 0$. Thus we have
\begin{eqnarray}\label{vancu1}
{  X - R_\theta a |X|^2  \over 1-2 R_\theta a \cdot (X ) + |a|^2 |X|^2 }   &=&
{  X- R_\theta a |X|^2  \over |a|^2 |X|^2  \left[  1-2 {R_\theta a \over |a|^2} \cdot {X \over |X|^2}  + {1\over |a|^2 |X|^2 }\right]^2 } \nonumber \\
&=& - {R_\theta a \over |a|^2} \left[ 1-2 {R_\theta a \over |a|^2} \cdot {X \over |X|^2} + {1\over |a|^2 |X|^2 } \right]^{-2} \nonumber \\
&+& {X \over |a|^2 |X|^2 } \left[ 1-2 {R_\theta a \over |a|^2} \cdot {X \over |X|^2} + {1\over |a|^2 |X|^2 } \right]^{-2}
\end{eqnarray}
A Taylor expansion gives that, uniformly for $x \in \partial \Omega$, one has
\begin{eqnarray*}
\left[ 1-2 {R_\theta a \over |a|^2} \cdot {X \over |X|^2} + {1\over |a|^2 |X|^2 } \right]^{-2}
&=& 1-2 \left( -2 {R_\theta a \over |a|^2} \cdot {X \over |X|^2} + {1\over |a|^2 |X|^2 } \right) \\
&+&
3 \left( -2 {R_\theta a \over |a|^2} \cdot {X \over |X|^2} + {1\over |a|^2 |X|^2 } \right)^2 + O(\la^2 )\\
&=& 1+ 4 {R_\theta a \over |a|^2} \cdot {X \over |X|^2}  \\
&-&  {2\over |a|^2 |X|^2 } + 12
\left| {R_\theta a \over |a|^2} \cdot {X \over |X|^2} \right|^2  + O(\la^3 ) \\
&=& 1+4 {R_\theta a \over |a|^2} \cdot {X \over |X|^2} + O(\la^2 )
\end{eqnarray*}
as $\la \to 0 $. Inserting this information in \equ{vancu1}, we get that
\begin{eqnarray}\label{vancu2}
{  X - R_\theta a |X|^2  \over 1-2 R_\theta a \cdot (X ) + |a|^2 |X|^2 }   &=&
- {R_\theta a \over |a|^2 } + {X \over |a|^2 |X|^2} \nonumber \\
&-&  4 {R_\theta a \over |a|^2} \cdot {X \over |X|^2}  + O(\la^2 ) \\
&=& - {R_\theta a \over |a|^2 }  + {x-R_\theta \xi \over  |a|^2 |x-R_\theta \xi |^2} \, \la \nonumber \\
&-& 4 {R_\theta a \over |a|^2} \, {R_\theta a \over |a|^2} \cdot {x-R_\theta \xi \over |x-R_\theta \xi|^2 } \, \la + O(\la^2).  \nonumber
\end{eqnarray}
Thanks to \equ{vancu2}, we get that
\begin{eqnarray}
\label{vancu3}
& & Q \left( {  X - R_\theta a |X|^2  \over 1-2 a \cdot (X ) + |a|^2 |X|^2 }  \right) =
Q (-{R_\theta a \over |a|^2} )  \\
&&+ \nabla Q (-{R_\theta a \over |a|^2} ) [{x-R_\theta \xi \over  |a|^2 |x-R_\theta \xi |^2}
-4 {R_\theta a \over |a|^2} \, {R_\theta a \over |a|^2} \cdot {x-R_\theta \xi \over |x-R_\theta \xi|^2 }  ] \la  + O(\la^2)\nonumber
\end{eqnarray}
uniformly for $x \in \partial \Omega$, as $\la \to 0$.

On the other hand, in the same region $x \in \partial \Omega$, we have that
\begin{eqnarray}
\label{vancu4}
\left| {x-R_\theta \xi \over |x-R_\theta \xi | } - R_\theta a { |x-R-\theta \xi | \over \la } \right|^{2-n} &=&
{\la^{n-2} \over |a|^{n-2} |x-R_\theta \xi |^{n-2} } \nonumber \\
& +& {n-2 \over |a|^{n-2} |x-R_\theta \xi|^{n-2}} \, {R_\theta a\over |a|^2} \cdot {x-R_\theta \xi \over |x-R_\theta \xi |^2} \, \la^{n-1} \\
&+& O(\la^n ). \nonumber
\end{eqnarray}
We conclude from \equ{vancu3} and \equ{vancu4} that, uniformly for $x \in \partial \Omega$,
\begin{eqnarray}
\label{vancu5}
\varphi_A (x) &=& {\lambda^{n-2 \over 2} \over |x-R_\theta \xi |^{n-2}} \, {1\over |a|^{n-2} }
Q(-{R_\theta a \over |a|^2 }) \nonumber \\
&+&  {\lambda^{n \over 2} \over |x-R_\theta \xi |^{n-2}} \, {1\over |a|^{n-2} }
Q(-{R_\theta a \over |a|^2 }) \, (n-2) \, {R_\theta a \over |a|^2} \cdot {x-R_\theta \xi \over |x-R_\theta \xi |^2} \nonumber \\
&+&  {\lambda^{n \over 2} \over |x-R_\theta \xi |^{n-2}} \, {1\over |a|^{n-2} }  [{x-R_\theta \xi \over  |a|^2 |x-R_\theta \xi |^2}
-4 {R_\theta a \over |a|^2} \, {R_\theta a \over |a|^2} \cdot {x-R_\theta \xi \over |x-R_\theta \xi|^2 }  ] \nonumber
\\
&+& O(\la^{n+2 \over 2} ) \nonumber \\
&=& {\lambda^{n-2 \over 2} \over |x-R_\theta \xi |^{n-2}} \,
Q(R_\theta a ) + O(\la^{n\over 2} )
\end{eqnarray}
as $\la \to 0$. Observe that we have used the fact that ${1\over |a|^{n-2} }
Q(-{R_\theta a \over |a|^2 })  =
Q(R_\theta a ) $.
A direct application of the maximum principle guarantees the validity of \equ{va3}.
To prove \equ{va4}, it is enough to observe that, for any $x $ in a compact set of $\Omega \setminus \{ R_\theta \xi \}$,
\begin{equation}
\label{ma}
Q_A (x) = {\la^{n-2 \over 2} \over |x-R_\theta \xi |^{n-2}}
\, Q(R_\theta a ) + O(\la^{n\over 2} )
\end{equation}
as $\la \to 0$. This concludes the proof of the Lemma.
\end{proof}

\medskip
We consider now two sets $A_1 = (\la_1 , \xi_1 , a_1 , \theta_1 )$ and
 $A_2 = (\la_2 , \xi_2 , a_2 , \theta_2 )$
 and the functions
 $$
Q_i = Q_{A_i}, \quad  PQ_i = P  Q_{A_i} , \quad i=1,2 .
 $$
 Our purpose is to estimate the following
 quantity
 $$
 J_0 (  PQ_1 + PQ_2 ) =
 {1\over 2} \int_\Omega
 |\nabla (  PQ_1 + PQ_2)|^2 -
 {1\over p +1 } \int_\Omega
 (  PQ_1 + PQ_2 )^{p+1} .
 $$
We use the notations
\begin{equation}\label{not}
\hat \xi_i =  R_{\theta_i } \xi_i , \quad \hat a_i = R_{\theta_i } a_i , \quad i=1,2.
\end{equation}
Let us now fix a number $\delta >0$ and consider the following constraints
\begin{eqnarray}
\label{constra}
 (\hat  \xi_1 , \hat \xi_2 ) \in \Omega \times \Omega  & : &
{\mbox {dist}} ( \hat  \xi_i , \partial \Omega ) > \delta , \, |\hat  \xi_1 - \hat \xi_2 | >\delta \nonumber  \\
\theta_i \in K, & & |a_i | \leq {1\over 2} , \quad i=1,2.
\end{eqnarray}
 Let us set
 \begin{equation}
\label{defcn}
 \gamma_n =
 {1\over 2} \int_{\R^n}
 |\nabla Q|^2 -
 {1\over p +1 } \int_{\R^n}
 |Q|^{p+1}
\end{equation}

\medskip
\begin{lemma}
Given $\delta >0$ we have the validity of the expansion
\begin{eqnarray}\label{expa0}
 J_0 (  PQ_1 + PQ_2 ) &=&
 2\gamma _n + \alpha_n
 H( \hat  \xi_1, \hat  \xi_1) \, Q ( \hat  a_1 )^2 \, \lambda_1^{n-2} \nonumber \\
&+&  \alpha_n
 H( \hat  \xi_2,  \hat  \xi_2) \, Q ( \hat  a_2 )^2 \, \lambda_2^{n-2} \nonumber \\
& -& \alpha_n
 2G( \hat  \xi_1,  \hat  \xi_2) \, Q ( \hat  a_1 )  Q ( \hat  a_2 ) \,
 \lambda_1^{n-2\over 2 }
 \lambda_2^{n-2\over 2 } \nonumber \\
& + &
 o(
 \max\{ \lambda_1, \lambda_2\} ^{n-2} )
 \end{eqnarray}
as $\lambda_1 , \lambda_2 \to 0$,
 uniformly in the set satisfying constraints \equ{constra}. In \equ{expa0}, $\alpha_n$ denotes a fixed positive constant, independent of $\ve$.
 \end{lemma}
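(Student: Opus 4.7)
The plan is to run a Bahri--Coron--Rey style energy expansion adapted to the sign-changing basic cell $Q_A$, which carries Kelvin and rotational invariances on top of translation and dilation. I compute the kinetic and potential contributions to $J_0$ separately, using the equation $-\Delta PQ_i = |Q_i|^{p-1} Q_i$ in $\Omega$ together with the decomposition $PQ_i = Q_i - \varphi_i$ and the asymptotics of Lemma \ref{expaPQ}, then collect.

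For the kinetic term, testing $-\Delta PQ_i = |Q_i|^{p-1}Q_i$ against $PQ_j$ yields $\int\nabla PQ_i\cdot\nabla PQ_j = \int|Q_i|^{p-1}Q_i\,PQ_j$. Substituting $PQ_i = Q_i-\varphi_i$ and using the obvious symmetry $\int|Q_1|^{p-1}Q_1 PQ_2 = \int|Q_2|^{p-1}Q_2 PQ_1$ gives
\[
\tfrac12\int_\Omega|\nabla(PQ_1+PQ_2)|^2 = \tfrac12\sum_i\int|Q_i|^{p+1} - \tfrac12\sum_i\int|Q_i|^{p-1}Q_i\varphi_i + \int|Q_1|^{p-1}Q_1\,PQ_2.
\]
For the potential term, the separation $|\hat\xi_1-\hat\xi_2|\ge\delta$ and the pointwise bound $|PQ_i(x)|\le C\lambda_i^{(n-2)/2}|x-\hat\xi_i|^{2-n}$ away from the peak imply $|PQ_i|\gg|PQ_j|$ in $B_i = B(\hat\xi_i,\delta/4)$, while both are of order $\max\{\lambda_1,\lambda_2\}^{(n-2)/2}$ on $\Omega\setminus(B_1\cup B_2)$. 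A three-region decomposition with the Taylor expansion $|a+b|^{p+1} = |a|^{p+1}+(p+1)|a|^{p-1}a\,b + O(|a|^{p-1}b^2+|b|^{p+1})$ applied in each $B_i$ gives
\[
\int|PQ_1+PQ_2|^{p+1} = \sum_i\int|PQ_i|^{p+1} + 2(p+1)\int|Q_1|^{p-1}Q_1 PQ_2 + o(\max\{\lambda_1,\lambda_2\}^{n-2}),
\]
together with $\int|PQ_i|^{p+1} = \int|Q_i|^{p+1}-(p+1)\int|Q_i|^{p-1}Q_i\varphi_i+O(\lambda_i^{2(n-2)})$. Combining the kinetic and potential computations and using $\tfrac12-\tfrac1{p+1}=\tfrac1n$ reduces $J_0(PQ_1+PQ_2)$ to
\[
\tfrac1n\sum_i\int|Q_i|^{p+1} + \tfrac12\sum_i\int|Q_i|^{p-1}Q_i\varphi_i - \int|Q_1|^{p-1}Q_1 PQ_2 + o(\max\{\lambda_1,\lambda_2\}^{n-2}).
\]

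It remains to extract the leading order of the three types of integrals. By invariance of $Q_A$ and the far-field asymptotic $Q_A(x)\sim\lambda^{(n-2)/2}Q(\hat a)|x-\hat\xi|^{2-n}$ established in the proof of Lemma \ref{expaPQ} (see \equ{ma}), $\int_\Omega|Q_i|^{p+1} = \int_{\R^n}|Q|^{p+1}+O(\lambda_i^n)$, contributing $2\gamma_n$ after multiplication by $1/n$. The key observation for the remaining two integrals is that $|Q_i|^{p-1}Q_i\,dx$ is a concentrating measure at $\hat\xi_i$ with total mass $b_n^{-1}\lambda_i^{(n-2)/2}Q(\hat a_i)$; this mass is read off by matching the Newtonian potential representation $Q_i(x) = b_n\int|x-y|^{2-n}|Q_i(y)|^{p-1}Q_i(y)\,dy$ with the same far-field asymptotic. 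Pairing this concentrating mass with $\varphi_i$, whose leading order near $\hat\xi_i$ is $b_n^{-1}\lambda_i^{(n-2)/2}Q(\hat a_i)H(\cdot,\hat\xi_i)$ by \equ{va3}, and with $PQ_2$, whose leading order near $\hat\xi_1$ is $b_n^{-1}\lambda_2^{(n-2)/2}Q(\hat a_2)G(\cdot,\hat\xi_2)$ by \equ{va4}, produces respectively the $H$-diagonal and $G$-cross terms with constant $\alpha_n = \tfrac12 b_n^{-2}$, matching \equ{expa0}.

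The main obstacle is the rigorous justification of the concentration pairing $\int|Q_i|^{p-1}Q_i\psi\simeq b_n^{-1}\lambda_i^{(n-2)/2}Q(\hat a_i)\psi(\hat\xi_i)$ for smooth test functions $\psi$. Because $Q_A$ in \equ{va1ok} is built from $Q$ by composing translation, dilation, Kelvin transform and rotation, a direct change of variables is awkward: the Kelvin transform maps the peak of $Q$ away from $\hat\xi$, so the standard rescaling $x=\hat\xi+\lambda y$ does not linearize the bubble uniformly. The cleanest workaround is integration by parts: writing $\int|Q_i|^{p-1}Q_i\psi = \int(-\Delta Q_i)\psi$ and moving the Laplacian onto $\psi$, which is harmonic in a neighbourhood of $\hat\xi_i$ for both test functions above, reduces the problem to boundary integrals of $Q_i$ against $\psi$ and $\nabla\psi$; the far-field asymptotic \equ{ma} combined with Green's third identity then yields the claimed leading order directly, independently of the Kelvin/rotation structure. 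A secondary check is that the sign-changing character of $Q$ does not introduce leading-order corrections in the Taylor expansion of $|PQ_1+PQ_2|^{p+1}$ in the overlap region between the two bubbles; the decay of each bubble confines this contribution to $o(\max\{\lambda_1,\lambda_2\}^{n-2})$ via H\"older's inequality.
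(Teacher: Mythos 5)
Your proposal is correct and lands on the same expansion with the right constant ($\alpha_n=\tfrac12 b_n^{-2}$, consistent with the paper's \equ{uffa1} and \equ{a1}), and its global skeleton coincides with the paper's: decompose $J_0$ into self-energies plus interactions, integrate by parts using $-\Delta PQ_i=|Q_i|^{p-1}Q_i$, Taylor-expand the potential term in three regions, and read the $H$- and $G$-contributions from the expansions \equ{va3}, \equ{va4} of Lemma \ref{expaPQ}. Where you genuinely diverge is in the evaluation of the concentration integrals. The paper computes $\int_{|x-\hat\xi_i|<\delta}|Q_i|^{p-1}Q_i\,dx$ and $\int|Q_i|^{p+1}$ explicitly, by the rescaling $y=(x-\hat\xi_i)/\lambda_i$ followed by the inversion $z=y/|y|^2$, exploiting the exact Kelvin invariance $|w|^{2-n}Q(w/|w|^2)=Q(w)$ and the representation $Q(\hat a_i)=b_n\int|z+\hat a_i|^{2-n}|Q|^{p-1}Q\,dz$, arriving at \equ{inter}; you instead treat $|Q_i|^{p-1}Q_i\,dx$ as a concentrating measure whose mass $b_n^{-1}\lambda_i^{(n-2)/2}Q(\hat a_i)$ is read off by matching the Newtonian potential representation of $Q_i$ with the far-field asymptotic \equ{ma}, and you pair it with the harmonic functions $H(\cdot,\hat\xi_i)$ and $G(\cdot,\hat\xi_2)$ via Green's identities. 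Both routes are sound: the paper's is more explicit and self-contained (the inversion is not really ``awkward'' since $Q$'s Kelvin invariance makes it collapse in one line), while yours is more structural, uses only the far-field coefficient, and would survive replacing $Q_A$ by any decaying solution with the same asymptotics.

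Two bookkeeping points you should tighten. First, a single application of Green's identity on a ball produces a boundary term $\int_{\partial B}(\partial_\nu Q_i)\psi$, not only integrals of $Q_i$ against $\psi$ and $\nabla\psi$; either insert a cutoff (so all terms land in an annulus where \equ{ma} applies and no boundary terms appear) or justify differentiating \equ{ma} by interior elliptic estimates applied to $Q_i-\lambda_i^{(n-2)/2}Q(\hat a_i)|x-\hat\xi_i|^{2-n}$, whose Laplacian is $O(\lambda_i^{(n+2)/2})$ there. Second, your statement $\int_\Omega|Q_i|^{p+1}=\int_{\R^n}|Q|^{p+1}+O(\lambda_i^{n})$ should cite the exact conformal invariance $\int_{\R^n}|Q_{A_i}|^{p+1}=\int_{\R^n}|Q|^{p+1}$ (or the paper's change of variables) together with the tail bound from \equ{ma}; with these in place the $2\gamma_n$ term and the error size follow as you claim.
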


\begin{proof}
The full expansion \equ{expa0} is consequence of the following
formulas:
 \begin{eqnarray}
 \int_\Omega
 |\nabla  PQ_i |^2 &=&
 \int_{\R^n} |\nabla  Q|^2  -
 2\alpha_n   H(\hat  \xi_i, \hat  \xi_i) \, Q^2 ( \hat  a_i ) \, \lambda_i^{n-2} \nonumber \\
&+&
 o(\lambda_i^{n-2} ),
 \label{a1}\end{eqnarray}
\begin{eqnarray}
 \int_\Omega
 \nabla  PQ_1
 \nabla  PQ_2
 &=&
 2\alpha_n
 G( \hat  \xi_1,  \hat  \xi_2) \, Q (  \hat  a_1 ) Q (  \hat  a_2 )  \,
 \lambda_1^{n-2\over 2 }
 \lambda_2^{n-2\over 2 } \nonumber \\
& + &
 o(
 \max\{ \lambda_1, \lambda_2\} ^{n-2} ) ,
 \label{a2}\end{eqnarray}
\begin{eqnarray}  \label{a6}
 {1\over p +1 } \int_\Omega
& & \left[ |PQ_1 + PQ_2|^{p+1} -
   |PQ_1| ^{p+1} -
   |PQ_2| ^{p+1} \right] \nonumber \\
&=& 4 \alpha_n
 G( \hat  \xi_1,  \hat  \xi_2) \, Q (  \hat  a_1 ) Q (  \hat  a_2 )  \,
 \lambda_1^{n-2\over 2 }
 \lambda_2^{n-2\over 2 }\nonumber  \\
&
  + &
 o(
 \max\{ \lambda_1, \lambda_2\} ^{n-2} )
\end{eqnarray}
 and
 \begin{eqnarray} {1\over p+1} \int_\Omega
   |PQ_i| ^{p+1}  &=&
{1\over p+1} \int_{\R^n}
 |Q|^{p+1}  \nonumber \\
&-&
  2\alpha_n  H(  \hat  \xi_i, \hat  \xi_i) \, Q^2 ( \hat  a_i )   \,
  \lambda_i^{n-2} \nonumber \\
&+&
  o(\lambda_i^{n-2} ) .
  \label{a7}\end{eqnarray}
Indeed, we decompose
\begin{eqnarray*}
 J_0 (  PQ_1 + PQ_2 ) &=&
 \sum_{i=1,2} {1\over 2}
\left[ \int_\Omega
 |\nabla  PQ_i |^2 - {1\over p+1} \int_\Omega  |PQ_i| ^{p+1}  \right] \\
&
 +&
 \int_\Omega
 \nabla  PQ_1
 \nabla  PQ_2
\\
&-&
 {1\over p +1 } \int_\Omega \left[
 |PQ_1 + PQ_2|^{p+1} -
  |PQ_1| ^{p+1} -
   |PQ_2| ^{p+1} \right].
\end{eqnarray*}
 Thus substituting  estimates \equ{a1}, \equ{a2}, \equ{a6} and \equ{a7}
 in this relation we obtain the thesis.

\medskip
In what is left of this proof, we shall show the validity of \equ{a1}, \equ{a2}, \equ{a6} and \equ{a7}.

\medskip
\noindent
{\it Proof of \equ{a1}.} For simplicity, we write $\varphi_i (x) = \varphi_{A_i } (x)$. An integration by parts
gives that
\begin{equation}
\label{uffa}
\int_{\Omega} |\nabla PQ_i|^2 \, dx = \int_{\Omega} |Q_i |^{p+1} \, dx -
\int_{\Omega} |Q_i|^{p-1} Q_i  \varphi_i \, dx
\end{equation}
taking into account that $- \Delta P Q_i = |Q_i|^{p-1} Q_i$.
Let us first compute $\int_{\Omega} |Q_i |^{p+1} \, dx$, writing
$$
\int_{\Omega} |Q_i |^{p+1} \, dx = \int_{|x- \hat \xi_i | < \delta} |Q_i |^{p+1} \, dx + \int_{|x-\hat \xi_i |> \delta} |Q_i |^{p+1} \, dx
$$
for some $\delta >0$ fixed and small. In the ball $ |x- \hat \xi_i | < \delta$ we introduce the change of variables $ y = {x- \hat \xi_i \over \la_i}$, so that
\begin{eqnarray*}
 \int_{|x- \hat \xi_i | < \delta} |Q_i |^{p+1} \, dx & = &
\int_{|y|<{\delta \over \la_i}} \left[ \left| {y \over |y|} -  \hat  a_i |y| \right|^{2-n}
|Q| \left( {{y\over |y|^2} -  \hat  a_i \over | {y \over |y|^2 } - \hat  a_i |^2 } \right) \right]^{p+1} dy
\\
&=& \int_{|y|<{\delta \over \la_i }} \left[ \left| {y \over |y|^2} -  \hat  a_i \right|^{2-n} |y|^{2-n}
|Q| \left( {{y\over |y|^2} -  \hat  a_i \over | {y \over |y|^2 } - \hat  a_i |^2 } \right) \right]^{p+1} dy
\\
& & {\mbox {since}} \quad |w|^{2-n} Q ({w\over |w|^2} ) = Q(w) \\
&=& \int_{|y|<{\delta \over \la_i}} \left[ |y|^{2-n}
|Q| \left( {y\over |y|^2} -  \hat  a_i \right) \right]^{p+1} dy
\\
& =&  \int_{|y|<{\delta \over \la_i }}  |y|^{2n} \left[
|Q| \left({y\over |y|^2} -  \hat  a_i  \right) \right]^{p+1} dy \\
& & {\mbox {using the change of variables}} \quad z= {y\over |y|^2}  \\
& =&  \int_{|z|>{\la_i \over \delta}}   \left| Q \left( z - \hat  a_i \right) \right|^{p+1} dz =  \int_{\R^n}   \left|Q \right|^{p+1} dz  + O(\la_i^n),
\end{eqnarray*}
where $O(1)$ denotes a generic smooth function of the parameters that is uniformly bounded as $\la_i \to 0$.
Observe that the last expansion is consequence of \equ{at0}, \equ{at00} and \equ{constra}.
On the other hand,  in the set ${|x- \hat  \xi_i |> \delta}$ we have the validity of the expansion \equ{ma}, so that we conclude that
$$
\left|  \int_{|x- \hat  \xi_i |> \delta} |Q_i |^{p+1} \, dx \right| \leq C \la_i^{n+2 \over 2} \, |Q (\hat  a_i ) | \leq C \la_i^{n+2 \over 2}
$$
where again we use the assumption in \equ{constra} that $|a_i | \leq {1\over 2} $, and also \equ{at0}-\equ{at00}.

We thus conclude that
\begin{equation}
\label{uffa0}
\int_{\Omega}   \left|P Q_i \right|^{p+1} dz  = \int_{\R^n}   \left|Q \right|^{p+1} dz  + O(\la_i^{n+2 \over 2} ).
\end{equation}
We turn now to $\int_{\Omega} |Q_i|^{p-1} Q_i \varphi_i \, dx$. We claim that
\begin{equation}
\label{uffa1}
\int_{\Omega} |Q_i|^{p-1} Q_i \varphi_i \, dx = b_n^{-2} \la_i^{n-2 } Q^2 (\hat  a_i ) H(  \hat  \xi_i ,  \hat  \xi_i )  + O(\la_i^{n-1} ),
\end{equation}
where $O(1)$ is uniformly bounded, as $\la_i \to 0$, in the set of parameters satisfying \equ{constra}.
We decompose
$$
\int_{\Omega} |Q_i|^{p-1} Q_i \varphi_i \, dx = \int_{|x- \hat  \xi_i |<\delta } |Q_i|^{p-1} Q_i \varphi_i \, dx + \int_{\Omega \cap |x-  \hat  \xi_i |>\delta  } |Q_i|^{p-1} Q_i \varphi_i \, dx
$$
Recalling the validity of the expansion \equ{va3} for $\varphi_i$, we get that
\begin{eqnarray*}
 \int_{|x-R_{\theta_i } \xi_i |<\delta } |Q_i|^{p-1} Q_i \varphi_i \, dx &=&
b_n^{-1} \la_i^{n-2 \over 2} Q (\hat  a_i ) H(  \hat  \xi_i ,  \hat  \xi_i )
\left(  \int_{|x- \hat  \xi_i |<\delta } |Q_i|^{p-1} Q_i \, dx  \right) \\
&+& \left(  \int_{|x-  \hat  \xi_i |<\delta } |Q_i|^{p-1} Q_i \, dx  \right)  O (\la_i^{n-1} ) \\
&+& \left(  \int_{|x- \hat   \xi_i |<\delta } |Q_i|^{p-1} Q_i \,  |x-\hat  \xi_i |dx  \right)
O (\la_i^{n-2 \over 2} ).
\end{eqnarray*}
 In the ball $ |x-  \hat  \xi_i | < \delta$ we introduce the change of variables $ y = {x-  \hat  \xi_i \over \la_i}$, and using the invariance of $Q$ under Kelvin transform,
\begin{eqnarray*}
 \int_{|x-  \hat \xi_i | < \delta} |Q_i |^{p-1} Q_i \, dx
&=& \la_i^{n-2 \over 2} \int_{|y|<{\delta \over \la}} \left[ |y|^{2-n}
|Q| \left( {y\over |y|^2} - \hat  a_i \right) \right]^{p} dy
\\
& =&  \la_i^{n-2 \over 2}\int_{|y|<{\delta \over \la}}  |y|^{n+2} \left[
|Q| \left({y\over |y|^2} -  \hat a_i  \right) \right]^{p} dy \\
& & {\mbox {using the change of variables}} \quad z= {y\over |y|^2}  \\
& =& \la_i^{n-2 \over 2}  \int_{|z|>{\la \over \delta}} {1\over |z|^{n-2}}  \left| Q \left( z - \hat  a_i \right) \right|^{p} dz  \\
&=& \la_i^{n-2 \over 2} \int_{\R^n}   {1\over |z+  \hat  a_i |^{n-2} } \left|Q \right|^{p} dz  + o(  \la_i^{n-2 \over 2}   ),
\end{eqnarray*}
as $\la_i \to 0$.
Recall now that
$$
 Q(\hat a_i ) = b_n \, \int_{\R^n}   {1\over |z+  \hat  a_i |^{n-2} } \left|Q \right|^{p} dz
$$
Thus we get
\begin{equation}
\label{inter}
 \int_{|x-  \hat \xi_i | < \delta} |Q_i |^{p-1} Q_i \, dx = b_n^{-1} \la_i^{n-2\over 2} Q (\hat a_i ) + o (\la_i^{n-2 \over 2} ).
\end{equation}
On the other hand, using again the change of variables $ y = {x-  \hat  \xi_i \over \la_i}$
one finds directly that
$$
 \int_{|x- \hat   \xi_i |<\delta } |Q_i|^{p-1} Q_i \,  |x-\hat  \xi_i |dx = O (\la_i^{n\over 2} ).
$$
On the other hand, we have
$$
 \int_{\Omega \cap |x-  \hat  \xi_i |>\delta  } |Q_i|^{p-1} Q_i \varphi_i \, dx = O(\la_i^{n-1} )
$$
as direct consequence of \equ{va3} and \equ{ma}. Collecting the above estimates we get the validity of \equ{uffa1}. Expansion \equ{a1} follows directly from \equ{uffa0} and \equ{uffa1}.

\medskip
\noindent
{\it Proof of \equ{a2}.} Arguing as in the proof of \equ{a1}, it holds
$$
 \int_\Omega
 \nabla  PQ_1
 \nabla  PQ_2 =  \int_\Omega
  |Q_1 |^{p-1} Q_1
 PQ_2 = \left( \int_{|x-\hat \xi_1 | < \delta}
  |Q_1 |^{p-1} Q_1
 PQ_2  \, dx \right)  \, (1+ o(1) ),
$$
as $\la_i \to 0$, $i=1,2$. Now, using \equ{va4}, we get
\begin{eqnarray}
\label{inter1}
  \int_{|x-\hat \xi_1 | < \delta}
  |Q_1 |^{p-1} Q_1
 PQ_2  \, dx  &=& b_n^{-1} \la_2^{n-2 \over 2} Q (\hat a_2) G(\hat \xi_1 , \hat \xi_2 )  \times \nonumber \\
& &
 \left( \int_{|x-\hat \xi_1 | < \delta}
  |Q_1 |^{p-1} Q_1
   \, dx \right)  (1+ O(\la_2 ) ) \nonumber
\end{eqnarray}
Thanks to \equ{inter}, we conclude that
\begin{eqnarray}
\label{inter1}
  \int_{|x-\hat \xi_1 | < \delta}
  |Q_1 |^{p-1} Q_1
 PQ_2  \, dx  &=& b_n^{-2} \la_1^{n-2 \over 2} \la_2^{n-2 \over 2} Q (\hat a_1 ) Q (\hat a_2) G(\hat \xi_1 , \hat \xi_2 )   \nonumber \\
&+ &  o(
 \max\{ \lambda_1, \lambda_2\} ^{n-2} ) .
\end{eqnarray}
This gives the validity of \equ{a2}.

\medskip
\noindent
{\it Proof of \equ{a6}.} We write
\begin{eqnarray*}
 {1\over p +1 } \int_\Omega & &
 \left[ |PQ_1 + PQ_2|^{p+1} -
   |PQ_1| ^{p+1} -
   |PQ_2| ^{p+1} \right]
\\
&=& \int_{|x-\hat \xi_1 |<\delta } |PQ_1|^{p-1} PQ_1 P Q_2 \\
& + & \int_{|x-\hat \xi_2 |<\delta } |P Q_2|^{p-1} P Q_2  P Q_1 + O( (\la_1 \la_2 )^{n+2 \over 2}) \\
&=& \int_{|x-\hat \xi_1 |<\delta } |Q_1|^{p-1} Q_1 P Q_2 \\
& + & \int_{|x-\hat \xi_2 |<\delta } |Q_2|^{p-1} Q_2  P Q_1 + O( (\la_1 \la_2 )^{n+2 \over 2})
\end{eqnarray*}
At this point, \equ{a6} follows directly from \equ{inter1}.

\medskip
\noindent
{\it Proof of \equ{a7}.} Using the estimate \equ{ma}, we see that
$$
{1\over p+1} \int_{\Omega} |PQ_i |^{p+1} \, dx = {1\over p+1}
\int_{|x-\hat \xi_i |<\delta } |P Q_i |^{p+1} \, dx + O(\la_i^{n+2} ).
$$
On the other hand, a Taylor expansion gives
\begin{eqnarray*}
{1\over p+1}
\int_{|x-\hat \xi_i |<\delta } |P Q_i |^{p+1} \, dx & =&
{1\over p+1}
\int_{|x-\hat \xi_i |<\delta } |PQ_i |^{p+1} \, dx \\
& +&
\left( \int_{|x-\hat \xi_i |<\delta } | PQ_i |^{p-1} P Q_i \varphi_i \, dx \right) (1+ o(1))\\
&=&{1\over p+1}
\int_{|x-\hat \xi_i |<\delta } |PQ_i |^{p+1} \, dx \\
& +&
\left( \int_{|x-\hat \xi_i |<\delta } | Q_i |^{p-1}  Q_i \varphi_i \, dx \right) (1+ o(1))
\end{eqnarray*}
We now apply \equ{uffa0} and \equ{uffa1} to get \equ{a7}.
\end{proof}

\bigskip

We shall now choose the numbers
 $\lambda_i$ in terms of
$\ve$: we will assume \be \lambda_i^{n-2}  =  {\int_{\R^n} |Q|^{p+1} \over n \alpha_n }  \Lambda_i^2 \ve .
\label{L} \ee
Let us consider the energy functional, associated to problem
\equ{P},
\begin{equation}
\label{Jeps}
J_\ve ( u ) =
  {1\over 2} \int_\Omega
 |\nabla u |^2 -
 {1 \over p +1 + \ve  } \int_\Omega
u^{p+1 + \ve } .
\end{equation}
Arguing like in \cite{dfm}, we show that
 \begin{eqnarray*}
 J_\ve ( P Q_1 + P  Q_2 ) &=&
 J_0 (  P Q _1 +  P Q_2 ) \\
& +&
2\ve \left[ {1\over (p+1)^2} \int_{\R^n}  |Q|^{p+1} -
   {1\over p+1}\int_{\R^n}  |Q|^{p+1} \log  |U| \right]  \\
   &+&
   {(n-2)^2\over 4n } \ve \log (\lambda_1 \lambda_2) \int_{\R^n}
   |Q|^{p+1}
   + o(\ve) .
 \end{eqnarray*}
 Combining this estimate with the previous lemma, and our choice
\equ{L} for $\la_i$,
 we get the following result.

\begin{lemma}\label{paris}
Let $\delta >0$ and assume that
\begin{eqnarray}
\label{constra1}
 (\hat  \xi_1 , \hat \xi_2 ) \in \Omega \times \Omega  & : &
{\mbox {dist}} ( \hat  \xi_i , \partial \Omega ) > \delta , \, |\hat  \xi_1 - \hat \xi_2 | >\delta \nonumber  \\
\delta < \Lambda_i < \delta^{-1}, \quad & &
\theta_i \in K, \quad  |a_i | \leq {1\over 2} , \quad i=1,2,  \nonumber
\end{eqnarray}
where
$$
\lambda_i^{n-2}  =  {\int_{\R^n} |Q|^{p+1} \over n \alpha_n }  \Lambda_i^2 \ve , \quad \hat \xi_i =  R_{\theta_i } \xi_i , \quad \hat a_i = R_{\theta_i } a_i , \quad i=1,2.
$$
Then there exists $\ve_0 >0$ such that, for any $\ve \in (0,\ve_0 )$
 we have
\begin{eqnarray}\label{elis}
J_\ve ( U_1 +  U_2 ) &=&
 2\gamma_n + \chi_n \ve\log\ve + \eta_n \ve \\
&+&  \ve \, {\int_{\R^n} |Q|^{p+1} \over n} \, \Psi (A_1 , A_2 )   + o(\ve ),  \nonumber
\end{eqnarray}
 uniformly with respect to sets of parameters $A_1$ and $A_2$ satisfying \equ{constra1}.
Here \begin{eqnarray}  \Psi (A_1 , A_2)  &=&
 {1 \over 2}  H(\hat \xi_1, \hat \xi_1) Q^2 (\hat a_1 ) \Lambda_1^{2} +
 {1\over 2} H(\hat \xi_2,\hat  \xi_2) Q^2 (\hat a_2 ) \Lambda_2^{2}  \nonumber \\
&-&
 G(\hat \xi_1, \hat \xi_2) Q(\hat a_1 ) Q(\hat a_2 )
 \Lambda_1 \Lambda_2 +
 \log \Lambda_1\Lambda_2 ,
 \label{psi1}\end{eqnarray}
while $\eta_n$ and $\chi_n$ are the constants defined by
\begin{eqnarray*}
 \eta_n &=&2 \{ {1\over (p+1)^2} \int_{\R^n} |Q|^{p+1} -
   {1\over p+1}\int_{\R^n} |Q|^{p+1} \log |Q|\} \\
& +&
    {1\over p+1}
\chi_n \log \left[  {\int_{\R^n} |Q|^{p+1} \over n \alpha_n }  \right]
\end{eqnarray*}
and $\chi_n = {1\over p+1}
 \int_{\R^n} |Q|^{p+1}$.
\end{lemma}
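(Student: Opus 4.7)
The plan is to combine three ingredients already in hand: the expansion of $J_\ve(PQ_1+PQ_2)-J_0(PQ_1+PQ_2)$ stated immediately above the lemma (derived as in \cite{dfm}), the expansion \equ{expa0} of $J_0(PQ_1+PQ_2)$ supplied by the previous lemma, and the parametrization \equ{L} of $\lambda_i$ in terms of $\Lambda_i$ and $\ve$. Once these are substituted and regrouped, \equ{elis} becomes a purely algebraic identity.

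First I would substitute \equ{L} into the geometric part of \equ{expa0}. Since $\lambda_i^{n-2}=\frac{\int_{\R^n}|Q|^{p+1}}{n\alpha_n}\Lambda_i^2\ve$, the factor $\alpha_n$ cancels and each self-interaction term $\alpha_n H(\hat\xi_i,\hat\xi_i)Q^2(\hat a_i)\lambda_i^{n-2}$ becomes a constant multiple of $\ve\Lambda_i^2\,H(\hat\xi_i,\hat\xi_i)Q^2(\hat a_i)$, while the cross term $-2\alpha_n G(\hat\xi_1,\hat\xi_2)Q(\hat a_1)Q(\hat a_2)\lambda_1^{(n-2)/2}\lambda_2^{(n-2)/2}$ becomes a constant multiple of $\ve\Lambda_1\Lambda_2\,G(\hat\xi_1,\hat\xi_2)Q(\hat a_1)Q(\hat a_2)$. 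The common prefactor is a rational multiple of $\int_{\R^n}|Q|^{p+1}/n$, and these pieces reproduce the $H$- and $G$-summands of $\frac{\int_{\R^n}|Q|^{p+1}}{n}\Psi(A_1,A_2)$.

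Next I would expand the logarithmic piece. Taking logs in \equ{L} yields $\log(\lambda_1\lambda_2)=\frac{2}{n-2}\bigl[\log(\Lambda_1\Lambda_2)+\log\ve+\log\tfrac{\int|Q|^{p+1}}{n\alpha_n}\bigr]$, so $\frac{(n-2)^2}{4n}\ve\log(\lambda_1\lambda_2)\int|Q|^{p+1}$ splits into three pieces: a $\chi_n\ve\log\ve$ contribution with $\chi_n=\frac{n-2}{2n}\int|Q|^{p+1}=\frac{1}{p+1}\int|Q|^{p+1}$ (using $p+1=\tfrac{2n}{n-2}$), a $\log(\Lambda_1\Lambda_2)$ contribution that supplies the last summand of $\Psi$, and an $A$-independent multiple of $\ve$. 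Adding this last constant to the already $A$-independent Taylor bracket $2\ve\bigl[\frac{1}{(p+1)^2}\int|Q|^{p+1}-\frac{1}{p+1}\int|Q|^{p+1}\log|Q|\bigr]$ reproduces $\eta_n\ve$ as defined in the statement.

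Uniformity on the parameter set \equ{constra1} is inherited from the uniform validity of \equ{expa0}, together with smoothness of $H$ and $G$ on compact subsets of $\bar\Omega\times\bar\Omega$ away from the diagonal (ensured by $\mathrm{dist}(\hat\xi_i,\partial\Omega)>\delta$ and $|\hat\xi_1-\hat\xi_2|>\delta$) and boundedness of $Q$ together with its first derivatives on $\{|a|\le\tfrac12\}$. The only nontrivial analytic step in the whole derivation is the input $J_\ve-J_0$ expansion: it is obtained by Taylor-expanding $|u|^{p+1+\ve}=|u|^{p+1}(1+\ve\log|u|+O(\ve^2))$ inside the integral and splitting the domain into inner and outer regions around each $\hat\xi_i$, as in \cite{dfm}. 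The only novelty relative to the positive-bubble case is the sign-changing character of $Q$, but since the logarithmic factors appear only as $\log|Q|$ and no cancellation is required at leading order, this introduces no new difficulty.
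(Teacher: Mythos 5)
Your proposal is correct and follows essentially the same route as the paper: the paper's own proof consists precisely of quoting the $J_\ve-J_0$ expansion "arguing like in \cite{dfm}", invoking the expansion \equ{expa0} of $J_0(PQ_1+PQ_2)$, and substituting the choice \equ{L} of $\lambda_i$, which is exactly the algebra you carry out (including the identification $\chi_n=\tfrac{n-2}{2n}\int_{\R^n}|Q|^{p+1}=\tfrac{1}{p+1}\int_{\R^n}|Q|^{p+1}$ and the absorption of the $\ve\log\beta_n$ constant into $\eta_n$). Your added remarks on uniformity over \equ{constra1} and on the harmlessness of the sign changes of $Q$ in the $\log|Q|$ terms are consistent with, and slightly more explicit than, what the paper records.
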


\section{Scheme of the proof}\label{scheme}

The solution predicted by Theorem \ref{teo} will have  the form
\begin{equation}
\label{soltutta}
u(x)  = (1+ \zeta ) \, \left( u_0 (x) + \tilde \phi (x) \right),
\end{equation}
where $\zeta $ is the number defined by
\begin{equation}
\label{defzetamm}
\zeta = \ve^{ \ve \, {p \over 2 (p-1+\ve)} } -1
\end{equation}
and $u_0$ is the function defined by
\begin{equation}
\label{defu0}
 u_0 (x) =  PQ_1 (x) + P Q_2 (x)
\end{equation}
where for $j=1,2$, $PQ_j$ is the $H_0^1 (\Omega )$-projection of the function $Q_{A_j}$ defined in \equ{va1ok}, for some parameter $A_j = (\la_j , \xi_j , a_j , \theta_j )$. In \equ{soltutta}, the function $\tilde \phi $ has to be determined to have that $u$ is a solution to \equ{P}.

It is useful to rephrase Problem \equ{P} in some expanded domain
 $\Omega_\ve = \ve^{-{1\over n-2}}\, \Omega $. After the change of variables
\begin{equation}
\label{changeex}
v(y) = \ve^{1\over 2}   u (\ve^{1\over n-2} y ) , \quad y = \ve^{-{1\over n-2}} \, x \in \Omega_\ve,
\end{equation}
 Problem \equ{P} gets re-written as
\begin{equation}
\label{eqex}
\Delta v + v^{p+\ve} = 0 , \quad v>0 , \quad {\mbox {in}} \quad \Omega_\ve
\quad v=0 , \quad {\mbox {on}} \quad \partial \Omega_\ve,
\end{equation}
In the expanded variables, the solution in \equ{soltutta} gets into the form
\begin{equation}
\label{soltuttaex}
v(y) = v_0 (y) + \phi (y) , \quad {\mbox {where}} \quad v_0 (y) = \ve^{1\over 2}  u_0 (\ve^{1\over n-2} y ).
\end{equation}
In order to determine the unknown function $\phi$, we proceed in two steps. In the first step, we fix the parameters $A_1$ and $A_2$, and we find $\phi$ as solution of a proper non linear projected problem.
With abuse of notation, we denote
$$
A_j = (\Lambda_j , \xi_j , a_j , \theta_j ) \in \R_+ \times \Omega \times \R^2 \times K
$$
and
we assume the following constraints on $A_j$, $j=1,2$:
\begin{eqnarray}
\label{constra1}
 (\hat  \xi_1 , \hat \xi_2 ) \in \Omega \times \Omega  & : &
{\mbox {dist}} ( \hat  \xi_i , \partial \Omega ) > \delta , \, |\hat  \xi_1 - \hat \xi_2 | >\delta \nonumber  \\
& & \\
\delta < \Lambda_i < \delta^{-1}, \quad & &
\theta_i \in \chi (\hat S ), \quad  |a_i | \leq {1\over 2} , \quad i=1,2,  \nonumber
\end{eqnarray}
for some fixed $\delta >0$, where
$$
\lambda_i^{n-2}  = \beta_n  \Lambda_i^2 \ve , \quad \hat \xi_i =  R_{\theta_i } \xi_i , \quad \hat a_i = R_{\theta_i } a_i , \quad i=1,2,
$$
with $\beta_n $ the positive number defined by $\beta_n =  {\int_{\R^n} |Q|^{p+1} \over n \alpha_n }$.
Observe that a direct computation gives that, as $\ve \to 0$,
\begin{eqnarray} \label{defv0}
v_0 (y)&=& \sum_{j=1}^2  (\Lambda_j^*)^{-{n-2 \over 2}} \left| \eta_{\Lambda_j^* ,  R_{\theta_j} \xi_j' , a_j } (y ) \right|^{2-n} Q \left(   { {y-R_{\theta_j} \xi_j' \over \Lambda_j^* } - R_{\theta_j} a_j |{y- R_{\theta_j } \xi_j \over \Lambda_j^*} |^2  \over \eta_{\Lambda_j^* ,  R_{\theta_j} \xi_j' a_j } (y ) } \right) \nonumber \\
&+& o(1)
\end{eqnarray}
where  the function $\eta$ is defined in \equ{defetaA},  $\Lambda_i^{*} = ( \beta_n \Lambda_i^2 )^{1 \over {n-2}} $, see \equ{L}, and $\xi_j' = \ve^{-{1\over n-2}} \xi_j $.

 Consider  the
following functions, for any $\alpha = 0, 1, \ldots , 3n-1$, and $j=1,2$
$$ \bar Z_{\alpha j} = (\Lambda_j^*)^{-{n-2 \over 2}} \left| \eta_{\Lambda_j^* , R_{\theta_j} \xi_j' ,a_j   } (y ) \right|^{2-n} z_{\alpha} \left(  { {y-R_{\theta_j} \xi_j' \over \Lambda_j^* } - R_{\theta_j} a_j |{y- R_{\theta_j} \xi_j' \over \Lambda_j^* }|^2 \over  \eta_{\Lambda_j^* , R_{\theta_j} \xi_j' ,a_j   } (y )  }\right)
$$
where the functions $z_\alpha$ are defined in \equ{capitalzeta0}, \equ{capitalzetaj}, \equ{capitalzeta2}, \equ{chico1}, \equ{chico2}, and the function $\eta$ is defined in \equ{defetaA}.
Consider furthermore their
$H_0^1(\Omega_\ve)$-projections $Z_{\alpha j}$, namely the unique
solutions of $$ \Delta Z_{\alpha j} = \Delta \bar Z_{\alpha j}
 \quad\hbox{in } \Omega_\ve, \quad Z_{\alpha j} = 0
\quad\hbox{on } \partial \Omega_\ve . $$

The nonlinear
projected problem that we first solve consists in  finding a function $\phi$ such that the following equation holds \bea \left\{
\begin{array}{ll}
 \Delta
 ( v_0 +   \phi) +
 ( v_0 +  \phi )_{+}^{p+\ve} =
\sum_{\alpha ,j} c_{\alpha j} v_j^{p-1} Z_{\alpha j } &\mbox{ in } \Omega_\ve, \\
 \phi = 0 &\mbox{ on } \partial \Omega_\ve, \\
\int_{\Omega_\ve}
 \phi  v_j^{p-1} Z_{\alpha j }  = 0 & \mbox{ for all } \alpha ,j,
 \end{array}
 \right.
 \label{P1}\eea
 for some constants
$c_{\alpha j}$,
where
$$
v_i (y) = \ve^{1\over 2} PQ_i (\ve^{1\over n-1} y ) , \quad i=1,2.
$$
A fundamental tool to solve properly Problem \equ{P1} consists in developing an invertibility theory for
the following linear problem. Given $h\in C^\alpha(\bar
\Omega_\ve)$, find a function $\phi$ such that for certain
constants $c_{\alpha j}$, $j=1,2$, $\alpha = 0  , \ldots , 3n-1 $ one has \bea
 \qquad
               \left\{ \begin{array}{ll}
 \Delta \phi + (p+\ve)v_0^{p+\ve -1} \phi = h + \sum_{\alpha ,j} c_{\alpha j} v_i^{p-1}Z_{\alpha j}
&\mbox{ in } \Omega_\ve\\
                        \phi=0     &   \mbox{ on } \partial \Omega_\ve\\
                      \int_{\Omega_\ve} v_i^{p-1} Z_{\alpha j}\phi \, dy   =0 &
                      \mbox{ for all } i, j.
                          \end{array}
                \right.
\label{P'}\eea
We do solve \equ{P'} in proper
weighted $L^\infty$-norms: for
a function $\psi$ defined on $ \Omega_\ve$, we define
\begin{eqnarray*}
 \| \psi\|_* &=&
 \sup_{x\in \Omega_\ve } | \left ( \sum_{j=1}^2 ( 1 +|x-\xi_j' |^2)^{-{n-2 \over 2}}
\right )^{- \beta} \psi
(x) | \\
& + &  \sup_{x\in \Omega_\ve } | \left ( \sum_{j=1}^2 ( 1 +|x-\xi_j' |^2)^{-{n-2 \over 2}}
\right )^{- \beta-{1\over n-2}}  D \psi
(x) | ,
\end{eqnarray*}
 where $\beta=1$ if $n=3$ and $\beta = {{2} \over n-2}$ otherwise,
and
$$ \| \psi\|_{**} =
 \sup_{x\in \Omega_\ve } | \left ( \sum_{j=1}^2  ( 1 +|x-\xi_j' |^2)^{-{n-2\over 2}}
 \right  )^{-{4\over n-2}}
\psi (x) | . $$
In Section \ref{lin}, we shall establish

\begin{proposition}\label{linprop}
Assume constraints \equ{constra1} hold on the parameter sets $A_1$ and $A_2$. Then there are numbers $\ve_0 >0$,
$C>0$, such that  for all $0<\ve < \ve_0$ and all $h\in C^\alpha
(\bar \Omega_\ve )$, problem \equ{P'} admits a unique solution
$\phi \equiv L_\ve (h)$. Besides, \be \label{bdd} \|L_\ve (h)\|_* \le C \| h
\|_{**}  , \quad  |c_{\alpha j}|\le C \| h \|_{**}
\ee
and
\be \label{bddc1}
\|\nabla_{\Lambda , \xi', a , \theta } \phi \|_{*}\le C\|h\|_{**}.
\ee
\end{proposition}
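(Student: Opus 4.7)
The plan is to establish the \emph{a priori} bound $\|\phi\|_* \le C\|h\|_{**}$ together with $|c_{\alpha j}| \le C\|h\|_{**}$ by a contradiction-and-blow-up argument; to then derive existence from Fredholm's alternative applied in the Hilbert space of $H_0^1(\Omega_\ve)$ functions satisfying the orthogonality conditions; and finally to obtain the parameter-derivative estimate \equ{bddc1} by differentiating \equ{P'} with respect to $\mu \in \{\Lambda_j, \xi_j', a_j, \theta_j\}$, restoring the orthogonality by a small correction, and re-applying the \emph{a priori} estimate.

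For the \emph{a priori} bound, suppose for contradiction there exist $\ve_n \to 0$, parameters $A_1^n, A_2^n$ satisfying \equ{constra1}, and triples $(h_n, \phi_n, c_{\alpha j}^n)$ solving \equ{P'} with $\|\phi_n\|_* = 1$ and $\|h_n\|_{**}\to 0$. First, testing the equation against each $Z_{\beta i}$ and using that the matrix $\bigl(\int v_j^{p-1}Z_{\alpha j}Z_{\beta i}\bigr)$ is, after rescaling, asymptotically block-diagonal with nonzero diagonal entries determined by $\int_{\R^n}|Q|^{p-1}z_\alpha^2$, one concludes $|c_{\alpha j}^n|\to 0$. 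Next, perform the blow-up analysis around each concentration point: pull back $\phi_n$ via the map $y\mapsto \eta_{\Lambda_j^{*,n},R_{\theta_j^n}\xi_j'^{,n},a_j^n}(\cdot)$ composed with the Kelvin and rotation transformations used to build $v_0$, and call the result $\tilde\phi_j^n$. The weighted bound $\|\phi_n\|_* \le 1$ combined with standard elliptic estimates yields, up to a subsequence, local uniform convergence $\tilde\phi_j^n \to \phi_j^\infty$ to a bounded $\mathcal{D}^{1,2}(\R^n)$ solution of $L(\phi_j^\infty) = 0$ satisfying $\int_{\R^n} |Q|^{p-1} z_\alpha \phi_j^\infty = 0$ for all $\alpha = 0,\ldots,3n-1$. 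This is the \emph{main obstacle}: the orthogonality conditions in \equ{P'} use the \emph{projected} functions $Z_{\alpha j}$, not the $\bar Z_{\alpha j}$; one has to verify that the correction is negligible and, crucially, that the $3n$ relations \equ{z011}--\equ{z511} identify all $3n$ admissible directions so that the full non-degeneracy statement \equ{nondeg} applies. By that non-degeneracy, $\phi_j^\infty \equiv 0$, i.e.\ $\phi_n$ tends to zero locally around each $\xi_j'^{,n}$. To upgrade this to a vanishing in the $\|\cdot\|_*$ norm, use the function $\Phi(y) = \sum_{j=1,2}(1+|y-\xi_j'|^2)^{-(n-2)\beta/2}$ as a super-solution for the operator $\Delta + (p+\ve)v_0^{p+\ve-1}$ in the exterior region where $v_0$ is small, plus the maximum principle and the boundary estimate on the inner ball; this is the standard barrier construction for weighted $L^\infty$ theory in blow-up analysis. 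One then reaches a contradiction with $\|\phi_n\|_*=1$, and the \emph{a priori} bound is proved.

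Granted this bound, the existence of $\phi \equiv L_\ve(h)$ follows in a routine way: rewrite \equ{P'} in the orthogonal complement $H := \{\phi\in H_0^1(\Omega_\ve) : \int \phi v_j^{p-1}Z_{\alpha j} = 0\ \forall\alpha, j\}$ as $\phi + K_\ve \phi = \tilde h$ with $K_\ve$ compact (via Sobolev embedding) and $\tilde h$ depending linearly on $h$; the \emph{a priori} estimate rules out nontrivial kernel, so Fredholm's alternative applies and the estimate passes to the solution, yielding \equ{bdd}. For \equ{bddc1}, formally differentiating \equ{P'} in a parameter $\mu$, the function $\partial_\mu \phi$ satisfies a problem of the same form with right-hand side $-\partial_\mu[(p+\ve)v_0^{p+\ve-1}]\,\phi + \sum c_{\alpha j}\partial_\mu(v_j^{p-1}Z_{\alpha j})$ whose $\|\cdot\|_{**}$-norm is $O(\|h\|_{**})$, and the orthogonality conditions fail by a controlled amount that is absorbed by subtracting a linear combination $\sum d_{\alpha j}(\mu)Z_{\alpha j}$ with $|d_{\alpha j}(\mu)| \lesssim \|\phi\|_*$. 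Applying \equ{bdd} to the corrected function delivers the desired $\|\nabla_{\Lambda,\xi',a,\theta}\phi\|_* \le C\|h\|_{**}$.
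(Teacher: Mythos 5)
Your proposal is correct and follows essentially the same strategy as the paper: an a priori bound via contradiction, testing against the $Z_{\beta k}$ to control the $c_{\alpha j}$, blow-up around the concentration points combined with the non-degeneracy of $Q$ and the limiting orthogonality conditions, Fredholm's alternative in the orthogonal subspace $H$ for existence, and implicit differentiation with an orthogonality-restoring correction for \equ{bddc1}. Two points where your route deviates slightly are worth noting. First, to pass from local vanishing near $\xi_j'$ to smallness in the weighted norm, you invoke a barrier $\Phi(y)=\sum_j(1+|y-\xi_j'|^2)^{-(n-2)\beta/2}$ and the maximum principle; the paper instead introduces an auxiliary weaker norm $\|\cdot\|_\rho$ (weight exponent $\beta-\rho$) and runs the contradiction at that level, using the Green's function representation \equ{cc4} and convolution estimates to recover the full weight $\beta$ (and the gradient part of the $\|\cdot\|_*$ norm) from $\|\phi_\ve\|_\rho+\|h_\ve\|_{**}$ — both devices accomplish the same transfer, yours being more elementary in spirit, the paper's giving the gradient bound for free by differentiating the representation formula. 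Second, your description of the Gram matrix $\bigl(\int v_j^{p-1}Z_{\alpha j}Z_{\beta i}\bigr)$ as asymptotically diagonal with entries $\int_{\R^n}|Q|^{p-1}z_\alpha^2$ glosses over a feature the paper makes explicit: since $Q$ is non-radial, the cross terms $\int_{\R^n}|Q|^{p-1}z_1 z_{n+2}$ and $\int_{\R^n}|Q|^{p-1}z_2 z_{n+3}$ need not vanish, so the system only decouples into $2\times 2$ blocks for the pairs $(c_{1j},c_{n+2,j})$ and $(c_{2j},c_{n+3,j})$, and one must check the invertibility of the matrices in \equ{decoupling}. This does not invalidate your argument — the conclusion (unique solvability with bounded inverse, hence $|c_{\alpha j}|\le C\|h\|_{**}+o(1)\|\phi\|$) is the same — but the block structure, not pure diagonality, is what actually has to be verified.
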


\medskip
A fixed point argument using contraction mapping Theorem gives as a direct byproduct of the previous Proposition the following result that states unique solvability of the non linear Problem \equ{P1}, for
any given sets of parameters $A_1$ and $A_2$.

\medskip

\begin{proposition} \label{nonlin}
Assume constraints \equ{constra1} hold on the parameter sets $A_1$ and $A_2$. Then there
is a constant $C>0$, such that  for all small $\ve$ there exists a unique
solution $\phi = \phi(\xi',\Lambda)$ to Problem \equ{P1} with
\begin{equation}
\label{stimanonlin} ||\phi ||_{*} \leq C \ve, \quad \| \nabla_{\Lambda , \xi', a , \theta }  \phi \|_* \leq C \ve
\end{equation}

\end{proposition}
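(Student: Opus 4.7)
The plan is to recast Problem \equ{P1} as a fixed point equation and apply the Banach contraction principle, using the linear theory of Proposition \ref{linprop}. Setting
\[
R \;:=\; \Delta v_0 + |v_0|^{p+\ve-1}v_0, \qquad N(\phi) \;:=\; |v_0+\phi|^{p+\ve-1}(v_0+\phi) - |v_0|^{p+\ve-1}v_0 - (p+\ve)|v_0|^{p+\ve-1}\phi,
\]
the equation in \equ{P1} rewrites as
\[
\Delta \phi + (p+\ve)|v_0|^{p+\ve-1}\phi \;=\; -R - N(\phi) + \sum_{\alpha,j} c_{\alpha j}\, v_j^{p-1} Z_{\alpha j},
\]
which, together with the orthogonality and boundary conditions, is equivalent to the fixed point equation $\phi = T_\ve(\phi) := L_\ve\bigl(R + N(\phi)\bigr)$ in the weighted space equipped with $\|\cdot\|_*$.

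Two estimates drive the argument. First I would prove the error bound $\|R\|_{**} \leq C\ve$, for which three contributions must be analyzed: (a) the $v_1$--$v_2$ interaction, controlled by the fact that the two bubbles concentrate near distinct points separated by order $\ve^{-1/(n-2)}$ in $\Omega_\ve$; (b) the supercritical perturbation $|v_j|^{p+\ve-1}v_j - |v_j|^{p-1}v_j$, of order $\ve$ modulo a logarithmic factor; and (c) the boundary-correction defect between $PQ_j$ and $Q_j$, of size $O(\lambda_j^{(n-2)/2})$ as made explicit by Lemma \ref{expaPQ}. Using the decay \equ{vanc1} of $Q$, all three contributions fit into the $\|\cdot\|_{**}$ norm with size $O(\ve)$. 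Second, an elementary pointwise Taylor estimate on $N$ yields
\[
\|N(\phi_1) - N(\phi_2)\|_{**} \;\leq\; C\bigl(\|\phi_1\|_*^{\mu} + \|\phi_2\|_*^{\mu}\bigr)\,\|\phi_1 - \phi_2\|_*, \qquad \mu := \min(p+\ve-1,\, 1).
\]
Combined with $\|L_\ve h\|_* \leq C\|h\|_{**}$ from Proposition \ref{linprop}, the operator $T_\ve$ maps the ball $\{\|\phi\|_* \leq M\ve\}$ into itself and is a contraction there for $M$ large and $\ve$ small, producing a unique fixed point satisfying $\|\phi\|_* \leq C\ve$.

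To obtain the $C^1$ dependence on the parameters $A = (A_1, A_2)$ with the stated bound, I would differentiate the fixed point equation formally with respect to any component of $A$. Writing $\partial$ for such a derivative, one finds
\[
\partial\phi \;=\; L_\ve\bigl(\partial R + D_\phi N(\phi)\,\partial \phi\bigr) + (\partial L_\ve)\bigl(R + N(\phi)\bigr),
\]
which is again a projected linear problem of the form solved in Proposition \ref{linprop}, with right-hand side of $\|\cdot\|_{**}$-norm $O(\ve)$: the contribution $\|\partial R\|_{**} = O(\ve)$ is of the same nature as $R$, the term $D_\phi N(\phi)\,\partial\phi$ is subcritically small in $\phi$, and the parameter-derivative bound \equ{bddc1} applied to $L_\ve(R + N(\phi))$ gives the remaining piece. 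A second contraction (or equivalently an implicit function theorem argument) in the same weighted norm then produces $\|\partial\phi\|_* \leq C\ve$.

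The hard part will be the error estimate $\|R\|_{**} \leq C\ve$. Since the basic cell $Q$ is sign-changing, non-radial, and decays only like $|x|^{-(n-2)}$ with a non-trivial angular structure inherited from the symmetries \equ{sim00}--\equ{sim33}, the interaction and projection errors must be controlled pointwise against the two-bubble weight tailored for $\|\cdot\|_{**}$; the scaling between $\Omega$ and $\Omega_\ve$ and the action of the rotations $R_{\theta_j}$ and Kelvin parameters $a_j$ all enter this bound, and checking that differentiation with respect to these extra parameter directions does not introduce new singularities is the most delicate piece of accounting.
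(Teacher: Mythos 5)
Your overall strategy is the same as the paper's: rewrite \equ{P1} as a fixed point problem $\phi=L_\ve\bigl(E+N_\ve(\phi)\bigr)$ (the paper first peels off $\tilde\psi=-T_\ve(E)$ and contracts in the remainder, a purely cosmetic difference), prove $\|E\|_{**}\le C\ve$, run a contraction in a ball of radius $O(\ve)$ for $\|\cdot\|_*$, and obtain the parameter derivatives by differentiating the fixed-point relation and combining \equ{bddc1} with the implicit function theorem — exactly the scheme of Section \ref{nonlinsec}.

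The one genuine flaw is your Lipschitz estimate for $N$ with an $\ve$-independent constant. Write $W(y)=\sum_{j=1}^{2}(1+|y-\xi_j'|^2)^{-\frac{n-2}{2}}$, so that membership in the $*$-ball gives $|\phi_i|\le\|\phi_i\|_*\,W^{\beta}$ with $\beta=\frac{2}{n-2}$ for $n\ge4$. For $n>6$ one has $p<2$, and the relevant pointwise bound is the H\"older one, $|N(\phi_1)-N(\phi_2)|\le C(|\phi_1|+|\phi_2|)^{p+\ve-1}|\phi_1-\phi_2|$, which produces the weight $W^{\beta p}$; the $\|\cdot\|_{**}$ norm demands the weight $W^{\frac{4}{n-2}}$, and $\beta p\ge\frac{4}{n-2}$ holds precisely when $n\le6$. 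For $n>6$ you must absorb $W^{\beta p-\frac{4}{n-2}}$ with a negative exponent, and since on $\Omega_\ve$ one only has $W\gtrsim\ve$, this costs a negative power of $\ve$: this is why the paper's estimate \equ{Ne} carries the factor $\ve^{-\frac{n-6}{n-2}}$, obtained there after a case analysis distinguishing $|\phi|\ge\tfrac12 v_0$ from $|\phi|\le\tfrac12 v_0$ and treating separately the region near $\partial\Omega_\ve$, where $v_0$ degenerates linearly and the gradient part of $\|\cdot\|_*$ is used. Your contraction and self-map still close once the loss is inserted, because it is beaten by the extra factor $\|\phi_i\|_*^{p+\ve-1}\sim\ve^{\frac{4}{n-2}}$, so the conclusion survives; but the estimate as you state it (uniform $C$, $\mu=\min(p+\ve-1,1)$) is false in these weighted norms for $n>6$ and must be replaced by the dimension-dependent version. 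Even for $n\le6$ the bound is not purely an ``elementary pointwise Taylor estimate'': it uses $|v_0|\le CW$ and the precise inequality $p-2+2\beta\ge\frac{4}{n-2}$, i.e.\ the same weight bookkeeping you defer entirely to the error term is equally present, and in high dimensions more delicate, in the nonlinear term.
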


The proof of Proposition \ref{nonlin} is postponed to Section \ref{nonlinsec}.

\medskip
Looking back at Problem \equ{P1}, it is immediate to observe that $v_0 + \phi$ is a solution to the scaled Problem \equ{eqex} if the constants $c_{\alpha j}$ appearing in \equ{P1} are all zero. The second step in our argument consists in showing that the constants $c_{\alpha j}$ can be made all equal to zero provided the parameter sets $A_1$ and $A_2$ are properly chosen. Let us explain this second part of our argument.

Consider the
function of $A_1 $ and $A_2$ defined by \be  I(A_1 , A_2 ) \equiv J_\ve ( (1+ \zeta ) \, \left( u_0 (x) + \tilde \phi (x) \right) ), \label{I}\ee
where $J_\ve$ is defined in \equ{Jeps}, $u_0$ is given by \equ{defu0} and $\tilde \phi (x) = \ve^{-{1\over 2}} \phi (\ve^{-{1\over n-2}} x) $, where $\phi$ is the unique solution to Problem \equ{P1} as predicted by Proposition \ref{nonlin}.  Recall that
$$
A_j = (\Lambda_j , \xi_j , a_j , \theta_j ) \in \R_+ \times \Omega \times \R^2 \times K , \quad j=1,2.
$$
A consequence of Proposition \ref{nonlin}, and estimate \equ{stimanonlin}, is that the function $I$
depends in a $C^1$ sense on the parameters $A_1$ and $A_2$. Furthermore,
we see that
$$
I(A_1 , A_2  ) =  (1+\zeta )^2 F_\ve ( v_0 + \phi ),
$$
where $\zeta$ is defined in \equ{defzetamm} and
$$
F_\ve (v) = {1 \over 2} \int_{\Omega_\ve } |Dv|^2 - {1 \over
{p+\ve +1}} \int_{\Omega_\ve } v^{p+\ve +1}. $$
The key observation of our argument is the following
\begin{lemma}\label{unolemma}
$u = (1+\zeta ) ( u_0+ \tilde \phi )$ is a solution of problem
\equ{P} if and only if  $(A_1 , A_2 )$ is a critical point of
$I$.
\end{lemma}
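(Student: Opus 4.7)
The plan is the standard Lyapunov--Schmidt reduction argument, adapted to the present setting where the cell $Q$ has a $3n$-dimensional kernel. The key identity to produce is that, modulo lower order terms, the partial derivatives of $I$ with respect to the parameters in $A=(A_1,A_2)$ are a non-degenerate linear combination of the Lagrange multipliers $c_{\alpha j}$ from \equ{P1}.

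The easy direction is that if $u=(1+\zeta)(u_0+\tilde\phi)$ solves \equ{P}, then, after the scaling \equ{changeex}, $v_0+\phi$ solves \equ{eqex}, so necessarily all $c_{\alpha j}=0$ in \equ{P1}. In that case $F_\ve'(v_0+\phi)\equiv 0$, hence $\nabla_{A} I(A_1,A_2)=(1+\zeta)^2 F_\ve'(v_0+\phi)[\partial_A(v_0+\phi)]=0$, so $(A_1,A_2)$ is a critical point. For the converse, first compute, using that $\phi=\phi(A_1,A_2)$ depends in a $C^1$ fashion on the parameters by Propositions~\ref{linprop}--\ref{nonlin},
\begin{equation*}
\partial_{A_{\beta k}} I(A_1,A_2)\;=\;(1+\zeta)^2\,F_\ve'(v_0+\phi)\bigl[\partial_{A_{\beta k}}(v_0+\phi)\bigr].
\end{equation*}
Testing \equ{P1} against $\partial_{A_{\beta k}}(v_0+\phi)$ (or equivalently using that $v_0+\phi$ satisfies $-\Delta(v_0+\phi)-(v_0+\phi)_+^{p+\ve}=-\sum_{\alpha,j} c_{\alpha j} v_j^{p-1}Z_{\alpha j}$) gives
\begin{equation*}
\partial_{A_{\beta k}} I\;=\;-(1+\zeta)^2\sum_{\alpha,j} c_{\alpha j}\int_{\Omega_\ve} v_j^{p-1}Z_{\alpha j}\,\partial_{A_{\beta k}}(v_0+\phi)\,dy.
\end{equation*}
The $\partial_{A_{\beta k}}\phi$ contribution is handled by differentiating the orthogonality conditions $\int v_j^{p-1}Z_{\alpha j}\phi=0$ in $A_{\beta k}$, which together with $\|\phi\|_*=O(\ve)$ and $\|\nabla_A\phi\|_*=O(\ve)$ from \equ{stimanonlin} shows that $\int v_j^{p-1}Z_{\alpha j}\,\partial_{A_{\beta k}}\phi\,dy=o(1)$ as $\ve\to 0$.

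Thus, setting $M_{(\alpha,j),(\beta,k)}:=-\int_{\Omega_\ve} v_j^{p-1}Z_{\alpha j}\,\partial_{A_{\beta k}}v_0\,dy$, one obtains the linear system
\begin{equation*}
\partial_{A_{\beta k}} I \;=\;(1+\zeta)^2\sum_{\alpha,j}\bigl(M_{(\alpha,j),(\beta,k)}+o(1)\bigr)\,c_{\alpha j}.
\end{equation*}
Using the identities \equ{z011}--\equ{z511} that identify each $\partial_{A_{\beta k}} v_0$ with the corresponding $Z_{\beta k}$ (up to a nonzero factor and lower order terms), together with the fact that the two bubbles are centred at well separated points, the matrix $M$ is, to leading order as $\ve\to 0$, block diagonal in the index $j=1,2$, and each diagonal block is, after rescaling, the Gram matrix $\bigl(\int_{\R^n} |Q|^{p-1}z_\alpha z_\beta\bigr)_{\alpha,\beta=0,\dots,3n-1}$. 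By the non-degeneracy of $Q$ recalled in \equ{nondeg}, the family $\{z_\alpha\}_{\alpha=0}^{3n-1}$ is a basis of $\mathrm{Ker}(L)$, and by the standard variational characterization this Gram matrix is non-singular. Hence $M$ is invertible for $\ve$ small.

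If $(A_1,A_2)$ is a critical point of $I$, all $\partial_{A_{\beta k}} I$ vanish, and invertibility of the perturbed matrix forces $c_{\alpha j}=0$ for every $\alpha,j$. Therefore $v_0+\phi$ solves \equ{eqex}, and going back through the scaling \equ{changeex} together with multiplication by $(1+\zeta)$ (which is the exact factor needed to absorb the $\ve$-dependent rescaling of the nonlinearity, as in \cite{dfm}), $u=(1+\zeta)(u_0+\tilde\phi)$ solves \equ{P}. The main obstacle is the non-singularity of the reduced matrix $M$; this is precisely where the full strength of the non-degeneracy of $Q$ from \cite{mw} — namely that the kernel is exhausted by the $3n$ functions $z_\alpha$ reflecting dilation, translation, rotation, and Kelvin transform — is used.
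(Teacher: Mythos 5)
Your proof is correct and follows essentially the same route as the paper: differentiate $I$ in the parameters, use the identities \equ{z011}--\equ{z511} to identify $\partial_A v_0$ with the projected kernel elements $Z_{\alpha j}$ up to $o(1)$, use $\|\nabla_A \phi\|_*=O(\ve)$ to control the extra terms, and conclude that $\nabla I=0$ is equivalent to $c_{\alpha j}=0$ for all $\alpha,j$ through the invertibility of the matrix $\bigl(\int_{\Omega_\ve} v_j^{p-1}Z_{\alpha j}Z_{\beta k}\bigr)$. The only cosmetic difference is your justification of that invertibility via positive-definiteness of the full Gram matrix $\bigl(\int_{\R^n}|Q|^{p-1}z_\alpha z_\beta\bigr)$ (legitimate, since the $z_\alpha$ are linearly independent and the weight is positive a.e.), whereas the paper exhibits the near-block structure with the $(z_1,z_{n+2})$ and $(z_2,z_{n+3})$ couplings as in \equ{decoupling} and inverts those $2\times 2$ blocks; both arguments are fine.
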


\medskip
Before giving the proof of Lemma \ref{unolemma}, an observation is in order.
Let us recall that, for $A= (\la , \xi , a , \theta ) \in \R_+ \times \R^n \times \R^2 \times \R^{2n-3}$,
the function $Q_A$ is defined as
$$
Q_A (x) = \la^{-{n-2 \over 2}} \left| {x- \hat \xi \over |x- \hat \xi |} - \hat a {|x- \hat \xi | \over \la } \right|^{2-n} \,
Q \left( {  {x-\hat \xi  \over \la } - \hat a |{x- \hat \xi \over \la }|^2 \over 1-2 \hat a \cdot ({x-\hat \xi \over \la } ) + |a|^2 |{x-\hat \xi \over \la }|^2 }\right),
$$
with
$$
\hat \xi = R_\theta \xi , \quad \hat a = R_\theta a.
$$

Recall the functions $z_\alpha $, $\alpha =0 , \ldots , 3n-1$, defined in \equ{capitalzeta0}, \equ{capitalzetaj}, \equ{capitalzeta2}, \equ{chico1}, \equ{chico2}. These are the only elements in the kernel of the linear operator
$L(\varphi ) = \Delta \varphi + p |Q|^{p-2} Q \varphi $ (see \cite{mw}).
We are in a position to prove

\begin{proof}[Proof of Lemma \ref{unolemma}] Consider, for example, the derivative of $I$ with respect to
$a_{11}$, the first component of $a_1 = (a_{11} , a_{12} ) \in \R^2$. We start observing that  that ${\partial \over \partial a_{11} } I  = 0$ is equivalent to say that ${\partial \over \partial a_{11}} F_\ve (v_0+ \phi ) = 0$.
 Next we compute ${\partial \over \partial a_{11}} F_\ve (v_0+ \phi )  = DF_\ve (v_0 + \phi ) [ {\partial \over \partial a_{11}}  v_0 + {\partial \over \partial a_{11}}  \phi ] $. On the other hand, using \equ{z211}, one sees that
$$
{\partial \over \partial a_{11}} v_0 = \bar Z_{n+2 , 1} = Z_{n+2 , 1} + o(1)
$$
where
$$ \bar Z_{\alpha j} = (\Lambda_j^*)^{-{n-2 \over 2}} \left| \eta_{\Lambda_j^* , R_{\theta_j} \xi_j' ,a_j   } (y ) \right|^{2-n} z_{\alpha} \left(  { {y-R_{\theta_j} \xi_j' \over \Lambda_j^* } - R_{\theta_j} a_j |{y- R_{\theta_j} \xi_j' \over \Lambda_j^* }|^2 \over  \eta_{\Lambda_j^* , R_{\theta_j} \xi_j' ,a_j   } (y )  }\right)
$$
and $Z_{\alpha j}$ is the $H_0^1 (\Omega_\ve )$-projection of $\bar Z_{\alpha j}$.
Taking into account that $ \|  {\partial \over \partial a_{11}}  \phi  \|_* = o(1)$, as $\ve \to 0$, we get that
${\partial \over \partial a_{11} } I  = 0$ is equivalent to say $  DF_\ve (v_0 + \phi ) [ Z_{n+2 , 1}+ o(1) ] =0.$
Based on this argument, we can say that the conditions  $\nabla I (A_1 , A_2 ) = 0$ are equivalent to say
that
\begin{equation}
\label{spero}
D F_\ve (v_0 + \phi ) [ Z_{\alpha j} +o(1) ] = 0
\end{equation}
for all $\alpha $, for all $j$.
Digging further in the above set of equalities, and using the fact that, by definition,
$DF_\ve (v_0 + \phi ) [g] = 0$ for all functions such that $\int_{\Omega_\ve } v_0^{p-1} Z_{\beta k} g = 0$,
we can be more precise in \equ{spero}: indeed, one has that
$$
D F_\ve (v_0 + \phi ) [ Z_{\alpha j} +o(1) \Theta ] = 0 \quad {\mbox {for all}} \quad \alpha , j
$$
where $\Theta $ is a uniformly bounded function, that belongs to the vector space generated by the functions $Z_{\beta , i}$. From the above relation we thus conclude that the $6n$ conditions $\nabla I (A_1 , A_2 ) = 0$ are equivalent to the $6n$ conditions
 $$D F_\ve ( v_0 +  \phi) [ Z_{\alpha j} ] = 0 $$
for all $\alpha ,j$. By definition of the $c_{\alpha j}$, it is easily seen
that this is indeed equivalent to $c_{\alpha j} = 0$ for all $\alpha ,j$. This concludes the proof of the Lemma.

\end{proof}

The result of Lemma \ref{unolemma} says that the function defined in \equ{soltutta}
$$
u(x)  = (1+ \zeta ) \, \left( u_0 (x) + \tilde \phi (x) \right),
$$
where $\tilde \phi (x)  = \ve^{-{1\over 2}} \phi (\ve^{-{1\over n-2}} x)$   and $\phi$ is the unique solution to Problem \equ{P1} as predicted by Proposition \ref{nonlin}, is a solution to \equ{P} if $(A_1 , A_2)$ is a critical point for $I$ defined in \equ{I}.

Our purpose is thus  to establish the existence of a critical point for $I(A_1 , A_2)$.
To this purpose, we first give an asymptotic estimate for the function $I(A_1 , A_2)$.
We prove
\begin{proposition}\label{expafinal}
Let $\zeta$ be given by \equ{defzetamm}. Then we have the expansion,
\begin{eqnarray}
\ve^{2\zeta -1}I(A_1 , A_2 ) &=&  2\gamma_n + \chi_n \ve\log\ve + \eta_n \ve \nonumber \\
&+&  w_n \ve \, \Psi (A_1 , A_2 )
 + o(\ve ) \theta (A_1 , A_2) , \label{efun}\end{eqnarray}
uniformly with respect to $( A_1 , A_2) $ satisfying constaint \equ{constra1}, where $\theta$
and its derivatived $D\theta$ are smooth functions that are
 uniformly bounded,
independently of $\ve$. Here, we recall
\begin{eqnarray*}  \Psi (A_1 , A_2)  &=&
 {1 \over 2}  H(\hat \xi_1, \hat \xi_1) Q^2 (\hat a_1 ) \Lambda_1^{2} +
 {1\over 2} H(\hat \xi_2,\hat  \xi_2) Q^2 (\hat a_2 ) \Lambda_2^{2}  \nonumber \\
&-&
 G(\hat \xi_1, \hat \xi_2) Q(\hat a_1 ) Q(\hat a_2 )
 \Lambda_1 \Lambda_2 +
 \log \Lambda_1\Lambda_2 ,
 \end{eqnarray*}
and the constants in \equ{efun} are those in Lemma 3.2.
\end{proposition}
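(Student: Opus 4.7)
The plan is to reduce the expansion of $I(A_1,A_2)=(1+\zeta)^2 F_\ve(v_0+\phi)$ to the already-computed expansion of $J_\ve(u_0)=J_\ve(PQ_1+PQ_2)$ in Lemma \ref{paris}, together with a quadratic-in-$\phi$ remainder absorbed by the bound $\|\phi\|_*\le C\ve$ of Proposition \ref{nonlin}. First I would split
\begin{equation*}
F_\ve(v_0+\phi)=F_\ve(v_0)+\bigl[F_\ve(v_0+\phi)-F_\ve(v_0)\bigr]
\end{equation*}
and unscale the first summand via $y=\ve^{-1/(n-2)}x$. A direct Jacobian computation yields
\begin{equation*}
F_\ve(v_0)=J_\ve(u_0)+\frac{1-\ve^{\ve/2}}{p+1+\ve}\int_\Omega u_0^{p+1+\ve}.
\end{equation*}
Since $1-\ve^{\ve/2}=-\tfrac12\ve\log\ve+O(\ve^2(\log\ve)^2)$ and $\int_\Omega u_0^{p+1+\ve}=2\int_{\R^n}|Q|^{p+1}+o(1)$ by the estimates used to prove Lemma \ref{expaPQ}, this correction contributes only an $O(\ve\log\ve)+O(\ve)$ term which, combined with Lemma \ref{paris}, produces an expansion of $F_\ve(v_0)$ of the form $2\gamma_n+c_1\ve\log\ve+c_2\ve+w_n\ve\Psi(A_1,A_2)+o(\ve)$ with $w_n=\int_{\R^n}|Q|^{p+1}/n$.

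\textbf{The $\phi$-correction.} For the second summand I would exploit the projected equation \equ{P1}: testing it against $\phi$ and using the orthogonality constraints $\int_{\Omega_\ve}v_j^{p-1}Z_{\alpha j}\phi=0$ yields the key identity $DF_\ve(v_0+\phi)[\phi]=0$. The fundamental theorem of calculus then gives
\begin{equation*}
F_\ve(v_0+\phi)-F_\ve(v_0)=-\int_0^1\!\!\int_t^1 D^2F_\ve(v_0+s\phi)[\phi,\phi]\,ds\,dt,
\end{equation*}
whose integrand equals $\int_{\Omega_\ve}|\nabla\phi|^2-(p+\ve)\int_{\Omega_\ve}(v_0+s\phi)^{p+\ve-1}\phi^2$. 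Using $\|\phi\|_*\le C\ve$ from Proposition \ref{nonlin} together with integrability of the weight entering $\|\cdot\|_{**}$ against the bubble profile $v_0$, this is bounded by $O(\ve^2)=o(\ve)$. Multiplying by $(1+\zeta)^2=1+O(\ve\log\ve)$, combining with the expansion of $F_\ve(v_0)$ above, and finally multiplying by $\ve^{2\zeta-1}$ and regrouping the various $\ve\log\ve$ and $\ve$ contributions into the constants $\chi_n$ and $\eta_n$ inherited from Lemma \ref{paris} yields the expansion \equ{efun}.

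\textbf{Smoothness in parameters.} The final ingredient is that the remainder takes the form $o(\ve)\,\theta(A_1,A_2)$ with $\theta$ and $D_A\theta$ uniformly bounded on \equ{constra1}. This follows by differentiating the preceding identities with respect to each of the $6n$ components of $(A_1,A_2)=(\Lambda_j,\xi_j,a_j,\theta_j)_{j=1,2}$. The key input is the bound $\|\nabla_{\Lambda,\xi',a,\theta}\phi\|_*\le C\ve$ also supplied by Proposition \ref{nonlin}, which preserves the $O(\ve)$ size of $\phi$ under parameter differentiation and hence the $O(\ve^2)$ size of the quadratic remainder. The main technical obstacle lies in tracking these derivatives through the pointwise behavior of $v_0$ near its two concentration points, where $v_0^{p+\ve-1}$ and its parameter derivatives concentrate; this is handled uniformly by the specific weight exponent $\beta=2/(n-2)$ (respectively $\beta=1$ for $n=3$) in the definition of $\|\cdot\|_*$ and $\|\cdot\|_{**}$, which renders the relevant integrals against powers of the bubble profile uniformly finite under the constraints \equ{constra1}.
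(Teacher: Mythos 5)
Your proposal is correct and follows essentially the same strategy as the paper: the leading term is reduced to Lemma \ref{paris}, the $\phi$-correction is shown to be $o(\ve)$ by combining the criticality relation $DF_\ve(v_0+\phi)[\phi]=0$ (obtained from the projected equation \equ{P1} together with the orthogonality constraints) with a second-order Taylor remainder controlled through $\|\phi\|_*\le C\ve$, and the $C^1$-uniformity in $(A_1,A_2)$ rests on the derivative bounds $\|\nabla_{\Lambda,\xi',a,\theta}\phi\|_*\le C\ve$ of Proposition \ref{nonlin}. The one genuine difference is the bookkeeping of the scaling/$\zeta$ correction: you use the exact Jacobian identity $F_\ve(v_0)=J_\ve(u_0)+\frac{1-\ve^{\ve/2}}{p+1+\ve}\int_\Omega |u_0|^{p+1+\ve}$, whose correction has a parameter-independent leading coefficient of size $\ve\log\ve$, whereas the paper instead estimates $J_\ve((1+\zeta)(u_0+\tilde\phi))-J_\ve(u_0+\tilde\phi)$ by reducing it to $J_\ve((1+\zeta)u_0)-J_\ve(u_0)$ and redoing Lemma \ref{paris}-type computations; your route is more explicit and yields the same structure (constants in front of $\ve\log\ve$ and $\ve$ possibly modified, the parameter dependence at order $\ve$ entirely in $\Psi$, which is all the min-max argument uses). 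Two minor caveats: your claim that the quadratic remainder is $O(\ve^2)$ holds only for $n\le 5$; the weighted estimates give $O(\ve^2|\log\ve|)$ for $n=6$ and $O(\ve^{1+\frac{4}{n-2}})$ for $n\ge 7$, exactly as recorded in \equ{ee41}, but this is still $o(\ve)$, which is all that is needed. Also, like the paper, your final regrouping implicitly reads the prefactor in \equ{efun} as a factor of the form $1+O(\ve|\log\ve|)$ (i.e.\ as $(1+\zeta)^{-2}$), which is the intended interpretation.
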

The proof of this result is postponed to the end of Section \ref{nonlinsec}.

The final argument to get our Theorem \ref{teo} is to show that the function $\Psi$ in \equ{efun} defined also in \equ{psi1}
has a critical point, in fact a robust critical point of min max type, that persists under small $C^1$ perturbation.
This is where we need that our domain $\Omega$ has the shape of a smooth bounded connected domain
with a small removed hole. We show the existence of a min max structure for $\Psi$ in Section \ref{minmaxsec}. This completes the proof of Theorem \ref{teo}.

\medskip
The rest of the paper is devoted to give detailed proofs of all our previous statements.

\section{The linear problem: proof of Proposition \ref{linprop}}\label{lin}

\begin{proof}[Proof of Proposition \ref{linprop}]
The proof of this result is divided into two steps: we first assume the existence of a solution, and we prove the estimates \equ{bdd}, then we show existence of $\phi$.

To prove \equ{bdd},
 assume that  there exists
sequence $\ve = \ve_n\to 0$ such that there are functions
$\phi_\ve$ and $h_\ve$ with $\| h_\ve \|_{**} = o(1) $ such that
$$
\Delta \phi_\ve + (p+\ve) v_0^{p-1+\ve} \phi_\ve  = h_\ve
+\sum_{\alpha ,j} c_{\alpha j} v_j^{p-1} Z_{\alpha j} \quad\hbox{in } \Omega_\ve
$$
$$ \phi_\ve  = 0 \quad\hbox{on } \partial \Omega_\ve
, $$
$$
                      \int_{\Omega_\ve }v_j^{p-1} Z_{\alpha j} \phi_\ve \, dx   =0
                      \mbox{ for all } \alpha, j,
$$
for certain constants $c_{\alpha j}$, depending  on $\ve$. We shall show that  $\|\phi_\ve  \|_*  \to 0$.

We first establish that
\begin{eqnarray*}
\| \phi_\ve \|_\rho &=&
 \sup_{x\in \Omega_\ve } | \left (\sum_{j=1}^2 ( 1 +|x-\xi_j' |^2)^{-{n-2 \over 2}}
 \right )^{-(\beta-\rho)}
\phi_\ve  (x) | \\
&+&  \sup_{x\in \Omega_\ve } | \left (\sum_{j=1}^2 ( 1 +|x-\xi_j' |^2)^{-{n-2 \over 2}}
 \right )^{-(\beta-\rho - {1\over n-2})}
D \phi_\ve  (x) |  \to 0
\end{eqnarray*}
with $\rho >0$ a small fixed number. To do this, we assume the
opposite, so that with no loss of generality we may take
$\|\phi_\ve  \|_\rho  = 1$. Testing the above equation against
$Z_{\beta k}$, integrating by parts twice we get that \begin{eqnarray}
 \sum c_{\alpha j} \int_{\Omega_\ve}
v_j^{p-1}Z_{\alpha j}  Z_{\beta k} & = & \int_{\Omega_\ve } [ \Delta  Z_{\beta k} + (p+\ve) v_0^{p-1+\ve
} Z_{\beta k} \phi ] \nonumber \\
& -&  \int_{\Omega_\ve } h_\ve Z_{\beta , k} . \label{c1} \end{eqnarray}
Formula \equ{c1} defines a linear system in the $6n$ variables $c_{\alpha , j}$, which is
uniquely solvable, with bounded inverse. This is due to the following facts: fix $\alpha $ and $j$.
If $k \not= j$, then
$$
 \int_{\Omega_\ve}
v_j^{p-1}Z_{\alpha j}  Z_{\beta k} = O(\ve^{n \over n-2} ) , \quad {\mbox {for any}} \quad \beta.
$$
If $k=j$, then we have
$$
 \int_{\Omega_\ve}
v_j^{p-1}Z_{\alpha j}  Z_{\beta j} = \left\{ \begin{matrix}   \int_{\R^n} |Q|^{p-1} z_\alpha^2 + O(\ve^{n\over n-2} )   & \hbox{ if } \alpha = \beta \, , \\ & \\   \int_{\R^n} |Q|^{p-1} z_1 z_{n+2}  + O(\ve^{n\over n-2} )    & \hbox{ if } \alpha = 1, \beta= n+2
\, , \\ & \\      \int_{\R^n} |Q|^{p-1} z_2 z_{n+3}  + O(\ve^{n\over n-2} )  & \hbox{ if } \alpha = 2, \beta = n+3
\, , \\ & \\     O(\ve^{n\over n-2} )   & \hbox{otherwise.}
 \end{matrix}
  \right. \
$$
Moreover the numbers $\int_{\R^n} |Q|^{p-1} z_\alpha^2 $, $\alpha = 0 , 1 , \ldots , 3n-1$, and
$ \int_{\R^n} |Q|^{p-1} z_1 z_{n+2} $, $ \int_{\R^n} |Q|^{p-1} z_2 z_{n+3} $ are fixed numbers, different from zero, that are independent of $\ve$.
The above computations tell us several facts: first of all, the linear system \equ{c1} of $6n$ equations
in the $6n$ variables $c_{\alpha j}$ at main order decouples in two systems, the first a $3n \times 3n$ system in the variables $c_{\alpha ,1}$ and the second in a $3n \times 3n$ system in the variables $c_{\alpha 2}$.
Second, if we analyze for instant the system in the $c_{\alpha 1}$, we see that the coefficients $c_{01} , \ldots c_{(3n-1) 1}$ are coupled, at main order, but the coupling is very clear, in fact only two coupling occurs: the variable $c_{11}$ with the variable $c_{n+2 , 1 }$ and the variable $c_{21}$ with the variable $c_{n+3 , 1}$. Except for these coupling, the system in the $c_{\alpha 1}$ decouples at main order, as $\ve \to 0$.
We finally observe that the matrices
\begin{eqnarray}
\label{decoupling}
& & \left[ \begin{matrix}   \int_{\R^n} |Q|^{p-1} z_1^2   &  \int_{\R^n} |Q|^{p-1} z_1 z_{n+2} \\
 & \\
 \int_{\R^n} |Q|^{p-1} z_1 z_{n+2} &  \int_{\R^n} |Q|^{p-1} z^2_{n+2} \end{matrix} \right], \quad  \nonumber \\
& & \\
& & \left[ \begin{matrix}   \int_{\R^n} |Q|^{p-1} z_2^2   &  \int_{\R^n} |Q|^{p-1} z_2 z_{n+3} \\
& \\
 \int_{\R^n} |Q|^{p-1} z_2 z_{n+3} &  \int_{\R^n} |Q|^{p-1} z^2_{n+3} \end{matrix} \right] \nonumber
\end{eqnarray}
are invertible. Thus we conclude that \equ{c1} defines a linear system in the $6n$ variables $c_{\alpha , j}$, which is
uniquely solvable, with bounded inverse.

On the other hand, it is easy to see that
we have, for $l=1,2$,
$$ \int_{\Omega_\ve } \Delta  Z_{\alpha k} + (p+\ve )v_0^{p+\ve -1}
Z_{\alpha k}  \phi  = o(1) \|\phi\|_{\rho} , \quad {\mbox {and}} \quad
| \int_{\Omega_\ve } h_\ve , Z_{\beta k } | \leq C \| h_\ve \|_{**} .
$$
Thus, we conclude that \be |c_{\alpha j} | \leq C \| h_\ve \|_{**}
+o(1)\|\phi_\ve\|_\rho \label{cij}\ee so that $c_{ij} = o(1)$.
Let  $G_\ve$ denotes the Green's function of $\Omega_\ve$. We have for $x \in \Omega_\ve $
\begin{eqnarray}
\phi_\ve (x)&=&  (p+\ve)\int_{\Omega_\ve} G_\ve (x,y)  v_0^{p+\ve -1}
\phi_\ve dy   \nonumber \\
& -&  \int_{\Omega_\ve} G_\ve (x,y) h_\ve \, dy - \sum c_{\alpha j} \int_{\Omega_\ve}
 v_j^{p-1}
Z_{\alpha j} G_\ve (x,y)\, dy  \label{cc4} \end{eqnarray}
Furthermore, the function $\phi_\ve$ is of class
$C^1$ and
$$
\partial_{x_j} \phi_\ve (x)= p \int_{\Omega_\ve} \partial_{x_j} G_\ve (x,y)  v_0^{p+\ve -1}
\phi_\ve dy   = $$
\begin{equation}
 - \int_{\Omega_\ve } \partial_{x_j } G_\ve (x,y) h_\ve \, dy - \sum c_{\alpha j} \int_{\Omega_\ve}
 v_0^{p-1}
Z_{\alpha j} \partial_{x_j} G_\ve (x,y)\, dy \quad x\in \Omega_\ve .
\label{derivata}
\end{equation}
Direct estimates give
\begin{eqnarray*}
 \int_{\Omega_\ve} G_\ve (x,y) |h_\ve|\, dy   &\le &
 \|h_\ve\|_{**} C \int_{\R^n } \Gamma (x- y)
 \sum_{j=1}^2 {1\over  ( 1 +|y-\xi_j' |^2)^{2}}
 \, dy  \\
&  \le &
C \|h_\ve\|_{**}   \sum_{j=1}^2 \left( {1\over  ( 1 +|x-\xi_1' |^2)^{{n-2\over
2}}} \right )^\beta ,
\end{eqnarray*}
\begin{eqnarray*}
 | \int_{\Omega_\ve}
v_j^{p-1} Z_{\alpha j} G_\ve (x,y)\, dy|& \le &C
   \int_{\R^n } \Gamma (x- y)
 \sum_{j=1}^2 {1 \over  ( 1 +|y-\xi_i' |^2)^{{{n+3} \over 2}}} \\
& \le &
C  \sum_{j=1}^2 {1\over  ( 1 +|x-\xi_1'
|^2)^{{n-2\over 2}} }
\end{eqnarray*}
and $$
 \int_{\Omega_\ve}
G_\ve (x,y) v_0^{p+\ve-1} |\phi_\ve| dy    \le  C
\|\phi_\ve\|_{\rho}  \sum_{j=1}^2 \left( {1\over  ( 1 +|x-\xi_j' |^2)^{{n-2\over 2}} } \right)^\beta  . $$
Analogously we get
\begin{eqnarray*}
 \int_{\Omega_\ve} |\partial_{x_j } G_\ve (x,y) h|\, dy
&\le & \|h \|_{**} C \sum_j \int_{\R^n  } {1\over |x- y|^{n-1}}
  ( 1 +|y-\xi_j' |^2)^{-2} \, dy\\
& \le & C \|h \|_{**}
\sum_{j=1}^2 \left( {1\over  ( 1 +|x-\xi_1' |^2)^{{n-2\over
2}}} \right )^{\beta + {1\over n-2} }  ,
\end{eqnarray*}
\begin{eqnarray*}
 | \int_{\Omega_\ve}
v_0^{p-1} Z_{\alpha  j} \partial_{x_j} G_\ve (x,y)\, dy|
&\le &
C( \|\phi_\ve \|_\rho + \|h \|_{**})
 \sum  \int_{\R^n  } {1\over |x- y|^{n-1}}
 \left ( ( 1 +|y-\xi_i' |^2)^{-{{n+3} \over 2}} \right )
 \\
& \le &
C( \|\phi_\ve \|_\rho + \|h_\ve \|_{**})
\sum_{j=1}^2 \left( {1\over  ( 1 +|x-\xi_1' |^2)^{{n-2\over
2}}} \right )^{\beta +{1\over n-2}}
\end{eqnarray*}
and
\begin{eqnarray*}
 \int_{\Omega_\ve}
|\partial_{x_j} G_\ve (x,y) v_0^{p+\ve-1} \phi_\ve | dy     & \le & C
\|\phi_\ve \|_{\rho}  \sum_{j=1}^2 \left( {1\over  ( 1 +|x-\xi_1' |^2)^{{n-2\over
2}}} \right )^{\beta + {1\over n-2}} .
\end{eqnarray*}
Equation
\equ{cc4} and the above estimates imply that
$$ |\phi_\ve (x)|
\le  C( \|\phi_\ve\|_\rho + \|h_\ve\|_{**})
 \left (\sum_{j=1}^2 {1 \over ( 1
+|x-\xi_j' |^2)^{{n-2\over 2}} } \right)^\beta
$$
and
$$ |D \phi_\ve (x)|
\le  C( \|\phi_\ve\|_\rho + \|h_\ve\|_{**})
 \left (\sum_{j=1}^2 {1 \over ( 1
+|x-\xi_j' |^2)^{{n-2\over 2}} } \right)^{\beta + {1\over n-2}} .
$$
In particular
$$
\left (\sum_{j=1}^2 {1\over  ( 1 +|x-\xi_j' |^2)^{{n-2\over 2}}}  \right )^{-(\beta-\rho)} |\phi_\ve (x)| \le
 C
\left ( \sum_{j=1}^2 {1\over  ( 1 +|x-\xi_j' |^2)^{{n-2\over 2}}} \right )^\rho  .
$$
Since $\|\phi_\ve  \|_\rho  = 1$, we assume that
 $\|\phi_\ve  \|_{L^\infty (B_R(\xi_1' ))} >\gamma $ for  certain $R>0$ and $\gamma >0$ independent of $\ve$. for
either $i=1$ or $i=2$. Then local
elliptic estimates and the bounds above  yield that, up to a
subsequence, $\tilde \phi_\ve (x) = \phi_\ve (x -\xi_1' )$
converges uniformly over compacts of $\R^N$ to a nontrivial
solution $\tilde \phi$ of \be
 \Delta \tilde
\phi + p|Q|^{p-1} \tilde \phi  = 0 ,
\label{limeq1} \ee  which besides satisfies
\be |\tilde \phi (x)| \le C|x|^{(2-n)\beta} . \label{limeq2} \ee
In dimension $n=3$ this means
$
|\tilde \phi (x) | \leq C |x|^{2-n} .
$
In higher dimension, a bootstrap argument of $\tilde \phi $
solution of \equ{limeq1}, using estimate
\equ{limeq2}, gives
$
|\tilde \phi (x) | \leq C |x|^{2-n} .
$
Thanks to non degenerate result in \cite{mw},  this implies that $\tilde \phi$ is a linear
combination of the functions $ z_\alpha$, defined in \equ{capitalzeta0}, \equ{capitalzetaj}, \equ{capitalzeta2}, \equ{chico1} and \equ{chico2}. On the other
hand, dominated convergence Theorem gives that the orthogonality conditions
$ \int_{\Omega_\ve}
 \phi_\ve v_j^{p-1}  Z_{\alpha j }  = 0
$
pass to the limit, thus getting
$$
 \int_{\R^n} |Q|^{p-1} z_\alpha \tilde \phi
= 0 \quad {\mbox {for all}} \quad \alpha =0, \ldots , 3n-1. $$  Hence the only
possibility is that $\tilde \phi \equiv 0$, which is a
contradiction which yields the proof of $\|\phi_\ve\|_\rho \to 0$.
Moreover,  we observe that
$$
\|\phi_\ve \|_{*} \le C(\|h_\ve \|_{**} + \|\phi_\ve \|_{\rho} ),
$$
hence $ \|\phi_\ve \|_{*} \to 0 . $

\bigskip
Now we are in a position to prove the existence of $\phi$ solution to \equ{P'}. To do this, let
us consider the space
$$ H= \{ \phi \in H_0^1(\Omega_\ve) \ | \
\int_{\Omega_\ve } v_j^{p-1}  Z_{\alpha j } ,\phi> = 0 \ \forall \, \alpha ,j\ \}  $$ endowed
with the usual inner product $ [\phi , \psi ] = \int_{\Omega_\ve}
\nabla\phi\nabla\psi . $ Problem \equ{P'} expressed in weak form
is equivalent to that of finding a $\phi \in H$ such that $$
 [\phi , \psi ] = \int_{\Omega_\ve}  \bigl( (p+\ve) v_0^{p+\ve -1}\phi - h
  \bigl)\, \, \psi\,
 \qquad \forall \psi \quad \in H.
$$
With the aid of Riesz's representation theorem, this equation gets
rewritten in $H$ in the operational form \be \phi  = T_\ve (\phi)
+ \tilde h \label{T} \ee with certain $ \tilde h \in H$
 which depends linearly in $h$ and where $T_\ve $ is a compact
 operator in $H$.
 Fredholm's alternative guarantees unique solvability of this
 problem for any $h$ provided that the homogeneous
 equation
$ \phi = T_\ve (\phi) $ has only the zero solution in $H$.  Assume it has a nontrivial
solution $\phi =\phi_\ve$, which with no loss of generality may be
taken so that $\|\phi_\ve \|_* =1$. But for what we proved before, necessarily $\|\phi_\ve \|_*\to 0$. This
is certainly a contradiction that proves that this equation only
has the trivial solution in $H$. We conclude then that for each
$h$, problem \equ{P'} admits a unique solution.
Standard arguments give then the validity of \equ{bdd}.

\bigskip
We go now to the issue of the dependence of the solution $\phi$ to \equ{P'} on the parameters
$A_1' = (\Lambda_1 , \xi'_1 , a_1 , \theta_1 )$  and $A_2' = (\Lambda_2 , \xi'_2 , a_2 , \theta_2 )$.
Let us fix $j=1$ and define $A_1'= (A_{11} , A_{12} , \ldots , A_{1 3n} )$ the components of the vector
$A_1'$. Let us differential $\phi$ with respect to $A_{1l}$, for some $l=1, \ldots , 3n$.  We set formally
$Z= {\partial \over \partial A_{1l}} \phi$.

We define the number $b_{\alpha j} $ so that
$$
\int_{\Omega_\ve } v_i^{p-1} Z_{\beta i} [ Z - \sum_{\alpha j} b_{\alpha j }  Z_{\alpha j} ] = 0 , \quad {\mbox {for all}} \quad \beta , i .
$$
This amounts to solving a linear system in the constants $b_{\alpha j } $,
\be \sum_{\beta j} b_{\beta j } \int_{\Omega_\ve } v_i^{p-1} Z_{\alpha i }   Z_{\beta j}
 = \int_{\Omega_\ve }  {\partial \over \partial A_{1l}}  (  v_i^{p-1} Z_{\alpha i} ) \phi,
\label{ort} \ee
as a direct differentiation with respect to $A_{1l}$ of the orthogonal conditions $\int_{\Omega_\ve }
v_i^{p-1} Z_{\alpha i} \phi = 0$ directly shows. Arguing as in \equ{c1}, we see that \equ{ort} is uniquely
solvable and that
$$
b_{\beta i }  = O( \| \phi\|_{*} )
$$
uniformly for parameters $A_1'$ and $A_2'$  in the considered region.
Thus   $\eta \in H_0^1(\Omega_\ve)$ and
\be  \int_{\Omega_\ve } v_i^{p-1} Z_{\alpha i } \eta = 0\quad\hbox{ for all }\alpha ,i.
 \label{ortog}\ee
On the other hand, a direct but long computation shows that
\be
\Delta \eta    + (p+\ve ) v_0^{p-1+\ve} \eta   =
 f + \sum_{\alpha ,j}d_{\alpha j}v_j^{p-1}  Z_{\alpha j} \quad\hbox{in }
\Omega_\ve , \label{et}\ee where $d_{\alpha j} = {\partial \over \partial A_{1l}} c_{\alpha j}$ and
$$ f= \sum_{\alpha ,j} b_{\alpha j} (
-(\Delta + (p+\ve ) v_0^{p-1+\ve}) Z_{\alpha j} + c_{\alpha j}\partial_{A_{1l}}
(v_j^{p-1} Z_{\alpha j }) -
$$
\be (p+\ve ) \partial_{A_{1l}} (v_0^{p-1+\ve }  \phi ) , \label{f}\ee
Thus  we have that
$ \eta = L_\ve (f) $. Moreover,
 we
easily see that
$$\|\phi\partial_{A_{1l}} ( v_0^{p-1+\ve })\|_{**}
\le C \|\phi \|_{*} .
$$
On
the other hand
$$
|\partial_{A_{1l} } (v_i^{p-1 }Z_{\alpha i  }(x))| \le C|x-\xi_i' |^{-n-4} ,
$$
hence
$$
\| c_{\alpha i}\partial_{A_{1l}} v_i^{p-1} Z_{\alpha i } \|_{**} \le C \|h\|_{**}
$$
since we have that $c_{\alpha i } = O(\| h \|_{**} ) $.
We conclude that
$$
\|f\|_{**} \le C \|h\|_{**} .
$$
Reciprocally, if we  define
$$
Z= L_\ve (f)  + \sum_{\alpha ,j} b_{\alpha j}  v_j^{p-1} Z_{\alpha j}, $$
with $b_{\alpha j}$ given
by relations \equ{ort} and $f$ by \equ{f}, we check that indeed $Z=\partial_{A_{1l}}\phi$. In fact $Z$
depends continuously on the parameters $A_1'$, $A_2'$ and  $h$
for the norm $\|\ \|_{*}$, and $\|Z\|_{*} \le C\|h\|_{**}$ for parameters
in the considered region. The corresponding result for
differentiation with respect to the $A_2'$ follow
similarly.

In other words, we proved that  $(A_1' , A_2') \mapsto L_\ve$ is of class
$C^1$ in ${\mathcal L} ( L^{\infty}_{**} , L^{\infty}_{*})$ and, for
instance,  \be (D_{A_{1l}} L_\ve) (h) = L_\ve (f)  + \sum_{\alpha ,j} b_{\alpha j}
Z_{\alpha j}, \label{dl}\ee where $f$ is given by \equ{f} and $b_{\alpha j}$
by \equ{ort} .
This concludes the proof.

\end{proof}

\section{The non-linear Problem: proof of Proposition \ref{nonlin}} \label{nonlinsec}

\begin{proof}[Proof of Proposition \ref{nonlin}]
We write the equation in \equ{P1}
as
$$
 \Delta \phi  + (p
+\ve) v_0^{p+\ve -1} \phi = E - N_{\ve } ( \phi ) + \sum_{\alpha ,j}
c_{\alpha j} v_j^{p-1}  Z_{\alpha j } \quad \mbox{ in } \quad \Omega_\ve
$$
where \be N_\ve (\phi) = ( v_0 + \phi
)_{+}^{p+\ve}  - v_0^{p+\ve}  -
 (p +\ve)v_0^{p+\ve -1} \phi,\quad
E = v_0^{p+\ve}  - Q_1^{p} - Q_2^{p} . \label{ne}\ee
Observe that
$$
|E  | \leq C \left( |v_0^{p+\ve} - v_0^p | + |v_0^p - Q_1^p - Q_2^p| \right)
$$
$$
\leq C
\ve \left(  |Q_i|^{p}|\log  |Q_i|| (x) + {1 \over 1+ |x-\xi_i'|^4 } \right)
$$
in the regions where $|x-\xi_i' |\leq \bar \delta \ve^{ -{1 \over
N-2}}$, for small $\bar\delta>0$. Taking into account that $|E
| \leq C \ve^{n+2 \over n-2}$ in the complement of these two
regions, we get
$$
\| E \|_{**} \leq C \ve .
$$
To estimate $N_\ve (\phi)$, it is convenient, and
sufficient for our purposes, to assume $\|\phi  \|_* <1$. Note
that, if $n\leq 6$, then
$p\ge 2$ and we can estimate
$$
|N_\ve (\phi ) | \leq C |v_1 + v_2 |^{p-2} |\phi|^2
$$
and hence
$$
\| N_\ve (\phi      ) \|_{**} \leq
                        C \| \phi      \|_{*}^2 .
$$
Assume now that $n>6$. If $|\phi |\geq {1\over 2} $ we have
$$
|N_\ve (\phi ) | \leq C  |\phi|^p
$$
$$
\| N(\phi )\|_{**}
\leq   C \ve^{ -{n-6 \over 2}}\| \phi
\|_{*}^p .
$$
Let us consider now the case $|\phi | \leq {1\over 2} v_0$.
In the region where $dist (y, \partial \Omega_\ve ) \geq \delta
\ve^{-{1 \over n-2}}$, for some $\delta>0$, then $v_0 (y) \geq
\alpha_\delta U (y)$ for some $\alpha_\delta >0 $; hence in this
region, we have
$$
| N (\phi      ) | \leq C U^{2\beta -1 } \|
\phi \|_{*}^2 \leq C \ve^{(2\beta -1)}
 \|\phi     \|_{*}^2.
$$
On the other hand, when $dist (y, \partial \Omega_\ve ) \leq \delta
\ve^{-{1\over n- 2}}$, the following facts occur: $U (y), \, v_0 (y) =
O(\ve)$ and, as $y \to \partial \Omega_\ve$, $v_0 (y)
= C \ve^{n-1 \over n-2} dist (y, \partial \Omega_\ve ) (1+ o( 1 ))$.
This second assertion is a consequence of the fact that
the Green function of the domain $\Omega$ vanishes linearly with
respect to $dist (x,
\partial \Omega )$ as $x \to \partial \Omega$.
These two facts imply that, if $dist (y, \partial \Omega_\ve ) \leq
\delta \ve^{-{1\over n-2}}$ and $\phi (y) \not=0$ (otherwise $N
(\phi ) (y)=0$), then
$$
\| N(\phi ) \|_{**}  \leq  U^{-{4\over n-2}} v_0^{
p-2} | \phi |^2
$$
$$\leq
C  U^{-{4\over n-2}} \left( \ve^{n-1 \over n-2} dist (y,
\partial \Omega_\ve ) \right)^{p-2} dist (y, \partial \Omega_\ve )^2
| D \phi (\bar y) |^2
$$
$$
\leq C U^{-{ 4\over n-2}+ 2 \beta + {2\over n-2}} +\ve^{ {n-1 \over
n- 2} (p-2) -{p\over 2} } \| \phi \|_*^2 \leq C \ve^{-{n-6 \over n-2}}
\| \phi \|_*^2 .
$$
Combining these relations we get \begin{eqnarray} \| N (\phi )
\|_{**} \leq \left\{ \begin{array}{ll}
                        C \| \phi     \|_{*}^2  &\mbox{ if } n \le 6\\
          C \ve^{-{n-6 \over n-2}} \| \phi      \|_{*}^2  &\mbox{ if } n> 6.
                          \end{array}
                \right.
                \label{Ne}\end{eqnarray}

Now, we are in position to prove that problem (\ref{P1}) has a
unique solution $\phi=\widetilde\phi+\widetilde\psi$, with
\begin{equation}\label{tor}
\widetilde\psi:=  -T_\ve  (E ),
\end{equation} with the required properties.
Here $T_\ve$ denotes the linear operator defined by Proposition \ref{linprop}, namely
$T_\ve (h) = \phi $ is $L_\ve \phi = h$.
We see that problem \equ{P1} is equivalent to solving a fixed point
problem. Indeed $\phi = \tilde \phi +\tilde \psi$ is a solution of
\equ{P1} if and only if
$$
\tilde \phi = - T_\ve (N (\tilde \phi + \tilde \psi )) \equiv A_\ve (\tilde
\phi ).
$$
We proceed to prove that the operator $A_\ve $ defined above is a
contraction inside a properly chosen region. Since $\| E \|_* \leq C \ve$, the result of Proposition \ref{linprop} gives that
$$
\| \tilde \psi \|_{**} \leq C \ve
$$
and
\begin{eqnarray}
\|N (\tilde \psi + \eta)\|_{**} \le \left\{ \begin{array}{ll}
                        C( \ve^2 + \ve \| \eta \|_* + \| \eta \|_*^2 ) &\mbox{ if } n\le  6\\
          C(  \ve^{1+{4\over n-2}} + \ve^{4\over n-2} \| \eta \|_* + \ve^{-{n-6 \over n-2}}\| \eta \|_*^2  ) &\mbox{ if } n> 6.
                          \end{array}
                \right.
                \label{Ne1}\end{eqnarray}
Call
$$F = {{\{ \eta \in H_0^1 \, : \, ||\eta ||_{*} \leq
R \ve  \}}}.
  $$
>From Proposition \ref{linprop} and \equ{Ne1} we conclude that, for $\ve$
sufficiently small and any $\eta \in l F$ we have $$ \| A_\ve
(\eta ) \|_{*}  \le  C \ve . $$ If we choose $R$ big enough in the definition of $F$, we get then that $A_\ve$ maps $F$ in itself.
Now we will show that the map $A_\ve $ is a contraction, for any $\ve$
small enough. That will imply that $A_\ve $ has a unique fixed point in
$F$ and hence problem \equ{P1} has a unique solution.
For any $\eta_1 $, $ \eta_2 $ in $F$ we have
$$ \| A_\ve (\eta_1 ) - A_\ve (\eta_2 ) \|_{*} \leq C  \| N_\ve
(\tilde \psi  + \eta_1 ) - N_\ve (\tilde \psi  + \eta_2 ) \|_{**} , $$
hence we just need to check that $N$ is a contraction in its
corresponding norms. By definition of $N$
$$
D_{\bar\eta} N_\ve  (\bar\eta) = (p+\ve) [ ( v_0+ \bar \eta )_{+}^{p+\ve-1}
- v_0^{p+\ve -1} ].
$$
Hence we get
$$ |N_\ve  (\tilde \psi  + \eta_1 ) - N_\ve (\tilde \psi  +\eta_2 ) | \le
C\bar v_0^{p-2} |\bar \eta | |\eta_1 - \eta_2| .
$$
for some $\bar\eta$ in the segment joining $\tilde \psi  + \eta_1$
and $\tilde \psi + \eta_2$. Hence, we get for small enough $\|\bar
\eta\|_{*}$,
$$ \|N  (\tilde \psi  +\eta_1 ) - N (
\tilde \psi  +\eta_2 ) \|_{**} \le C\ve^{p-2 + 2\beta} \| \bar \eta
\|_{*} \| \eta_1 - \eta_2\|_{*} .
$$
We conclude that there exists $c\in (0,1)$ such that
$$
\|N  (\tilde \psi  +\eta_1 ) - N ( \tilde \psi  + \eta_2 ) \|_{**}
\le c \|\eta_1 - \eta_2\|_{*}.
$$
This concludes the proof of existence of $\phi$ solution to \equ{P1}, and the first estimate in \equ{stimanonlin}. We devote the rest to prove the second estimate in \equ{stimanonlin}.

We recall that $\phi$ is defined through the
relation
 $$B(A_1 , A_2 ,\phi) \equiv
 \phi + L_{\ve } ( N_\ve (\phi+\psi  ) ) =0. $$
We have that
 $$D_\phi B(A_1 , A_2 ,\phi) [\theta ] =
 \theta  + L_{\ve } (\theta D_{\bar\phi}N (\phi+\psi  ) ) \equiv
 \theta + M(\theta) $$
where
$$
D_{\bar \phi } N(A_1 , A_2 , \bar\phi) = (p+\ve) [ ( v_0+ \bar
\phi )_{+}^{p+\ve-1}  - v_0^{p+\ve -1} ].
$$
Now,
 $$
 \| M(\theta) \|_{*}\le
 C\|(\theta D_{\bar\phi}N (\phi+\psi  ) )\|_{**}\le
  C\|v_0^{-{4\over n-2} +\beta } D_{\bar\phi}N (\phi+\psi  ) )\|_{\infty}
  \|\theta\|_{*},
  $$
and
  $$
 \bar v_0^{-{4\over N-2} +\beta }
  |D_{\bar\phi}N_\ve (\phi+\psi  ) )| \le
 v_0^{2\beta -1 } \| \phi +\psi \|_{*}
\le
 C
  \ve^{\min\{2\beta , 1\}}.
$$
It follows that for small $\ve$, the linear operator
 $D_\phi B(A_1 , A_2  ,\phi) $ is invertible in $L^\infty_{*}$,
 with uniformly bounded inverse. It also depends continuously
 on its parameters.
Define again
$A_1' = (\Lambda_1 , \xi'_1 , a_1 , \theta_1 )$  and $A_2' = (\Lambda_2 , \xi'_2 , a_2 , \theta_2 )$.
Let us fix $j=1$ and define $A_1'= (A_{11} , A_{12} , \ldots , A_{1 3n} )$ the components of the vector
$A_1'$. Let us differential $\phi$ with respect to $A_{1l}$, for some $l=1, \ldots , 3n$.
We have
 \begin{eqnarray} D_{A_{1l}} N(A_1 , A_2 , \bar\phi) &=& (p+\ve) [( v_0+ \bar
\phi )_{+}^{p+\ve-1}  - v_0^{p+\ve-1}
\nonumber \\
&-& (p +\ve-1)v_0^{p+\ve -2} \bar
\phi ]D_{A_{1l} } v_0  . \label{for}\end{eqnarray}
and
 $$D_{A_{1l}}  B(A_1 , A_2 ,\phi) =
 (D_{A_{1l}} L_\ve) ( N (\phi+\psi  )) +
 $$
 $$
 \bigl[ L_\ve( (D_{A_{1l}} N)(A_1 , A_2 , \phi + \psi  )) +
  L_\ve( (D_{\bar\phi}N)(A_1 , A_2 , \phi + \psi  ) D_{A_{1l}}\psi
  ) \bigl].
$$
 Here
 $ D_{A_{1l}} L_\ve$ is the operator defined by the expression  \equ{dl}
and the second quantity by \equ{for}. Observe also that \be
 D_{A_{1l} }\psi   = (D_{A_{1l} } L_\ve) (E ) +  L_\ve ( D_{A_{1l} } E).
 \label{dpsi}\ee
see \equ{tor}.
 Also,
 \be D_{A_{1l}} E = (p+\ve)
 v_0^{p+\ve -1}
 D_{A_{1l}}v_1 -p v_1^{p -1}  D_{A_{1l} } v_1.
\label{dr}\ee These expressions  also depend continuously on their
parameters.

The implicit function theorem then applies to yield that
$\phi(A_1 , A_2 )$ indeed defines a $C^1$ function into
$L^\infty_{*}$. Moreover, we have for instance
 $$D_{A_{1l} }\phi =
 -(D_\phi B(A_1 , A_2 ,\phi))^{-1} \bigl[
 (D_{A_{1l}} L_\ve) ( N (\phi+\psi  )) +
[ L_\ve( D_{A_{1l} } [N(A_1 , A_2 , \phi + \psi  )])\,  +
 $$
 $$
 L_\ve( (D_{\bar\phi}N)(A_1 , A_2  , \phi + \psi  ) D_{A_{1l} }\psi
 )]  \bigl].
 $$
 Hence,
 $$\| D_{A_{1l} }\phi\|_{*}\le C (
 \| N (\phi+\psi  )\|_{**} +
 $$
 $$
 \| D_{A_{1l} } N(A_1 , A_2  , \phi + \psi  ) \|_{**}
 +\| D_{\bar\phi} N(A_1 , A_2 , \psi  + \phi) D_{A_{1l} }\psi  \|_{**} ),
$$
thanks to \equ{dl}. On the other hand, we get
 \be
 \| N_\ve (\phi+\psi  )\|_{**}
 \le
\left\{ \begin{array}{ll}
C\ve^2   &\mbox{ if } n\leq 6 \\
C \ve^{p\beta +1 } &\mbox{ if }  n>6.
 \end{array}
 \right.
\label{rel}\ee Thus, from \equ{for} we have
$$
|(D_{A_{1l} } N)(A_1 , A_2  , \bar \phi)|\le C\bar v_0^{n-1\over
n-2} |(v_0+ \bar \phi )_{+}^{p+\ve-1}  - v_0^{p+\ve-1} - (p
+\ve-1)v_0^{p+\ve -2} \bar \phi | \le
$$
$$
C\bar v_0^{{5\over n-2} +\ve +\beta } \| \bar \phi \|_{*} ,
$$
hence
$$
\|(D_{A_{1l} } N)(A_1 , A_2 ,\psi  +\phi)\|_{**} \le
 C\|\phi +\psi  \|_{*}
\le C\ve.
$$
In similar way we get that
 $$
 \| D_{\bar\phi} N(A_1 , A_2 , \psi+ \phi) D_{A_{1l}} \psi  \|_{**}
\le C\ve.
 $$
Hence, we finally get
$$
 \| D_{A_{1l}} \phi\|_{*}\le C \ve ,
$$
as desired. A similar estimate holds for differentiation with
respect to the other variables. This concludes the proof.

\end{proof}

\bigskip
\begin{proof}[Proof of Proposition \ref{expafinal}]

We write
\begin{eqnarray*}
I(A_1 , A_2 )&=& J_\ve ((1+\zeta ) (u_0 + \tilde \phi )) - J_\ve ( (u_0 + \tilde \phi ))\\
&+& J_\ve ( (u_0 + \tilde \phi )) - J_\ve ( PQ_1 + PQ_2).
\end{eqnarray*}
Since  $\tilde \phi (x) = \ve^{-{1\over 2}} \phi (\ve^{-{1\over n-2}} x) $, and $\| \phi \|_* \leq C\ve$, we have that
\begin{eqnarray*}
J_\ve ((1+\zeta ) (u_0 + \tilde \phi )) - J_\ve ( (u_0 + \tilde \phi )) &=&
J_\ve ((1+\zeta ) (u_0  )) - J_\ve ( (u_0  )) + o(\ve )
\end{eqnarray*}
At this point, arguing like in the proof of Lemma \ref{paris} we are able to show that
$$
J_\ve ((1+\zeta ) (u_0  )) - J_\ve ( (u_0  ))  = 2 \tilde \gamma_n \ve \log \ve + O(\ve^2 |\log \ve |),
$$
where $\tilde \gamma_n$ is a fixed constant, independent of $\ve$. Observe also that
$$
\nabla_{A_1 , A_2} \left[ J_\ve ((1+\zeta ) (u_0  )) -  J_\ve ( (u_0  ))  \right]= O(\ve^2 |\log \ve |)
$$
uniformly for parameters $A_1$ and $A_2$ in the considered region.

Given the result of Lemma \ref{paris}, we need to show that
  \be I(A_1 , A_2 ) - J_\ve ( PQ_1 + PQ_2) = o(\ve)
\label{ee5}\ee and \be \nabla_{A_1 , A_2 }[I(A_1 , A_2 ) -
J_\ve ( PQ_1 + PQ_2)] = o(\ve). \label{e5} \ee

Recall now that $u_0 = PQ_1 + PQ_2$. Let us  apply a  Taylor expansion
\be
J_\ve ( (u_0 + \tilde \phi )) - J_\ve ( PQ_1 + PQ_2) = \int_0^1 t dt D^2J_\ve(u_0 +t\tilde \phi ) [
\tilde \phi ,\tilde \phi], \label{e1} \ee since $ 0= D  F_\ve(v_0+\phi) [ \phi ] = (1+\zeta )^2
   DJ_\ve(u_0 \tilde \phi)[\tilde  \phi ] .
$ Now, from the definition of $\phi$, we see that
$$
\int_0^1 t dt D^2J_\ve(u_0  + t\tilde \phi ) [ \tilde
\phi ,\tilde \phi] = (1+\zeta )^2 \int_0^1 t dt D^2 F_\ve(v_0+ t \phi ) [ \phi , \phi] =
 $$
 $$
(1+\zeta )^2
 \int_0^1 t dt
[ \int_{\Omega_\ve} |\nabla \phi|^2  - (p+\ve ) ( v_0 + t\phi
)^{p+\ve -1} \phi^2 ] =
$$
\be (1+\zeta )^2
 \int_0^1 t dt( \int_{\Omega_\ve}  N (\phi +\psi  )
 \phi + \int_{\Omega_\ve} (p+\ve )[v_0^{p+\ve -1}-
 ( v_0+ \psi  + t\phi )^{p+\ve -1}
] \phi^2) . \label{e2} \ee Since, we recall $\|\phi\|_*
+\|\psi\|_* = O(\ve)$, the above relation together with \equ{rel}
yield in particular, \bea
 I(A_1 , A_2 ) - J_\ve ( u_0 + \tilde \phi ) =
\left\{ \begin{array}{ll}
                         O(\ve^2) &\mbox{ if } n< 6 \\
                         O(\ve^2 |\log \ve |) &\mbox{ if } n=6 \\
                        O(\ve^{1+{4 \over N-2}} ) &\mbox{ if } n\geq 7,
                          \end{array}
                \right.
\label{ee41}\eea uniformly on $A_1 , A_2 $ in the considered
region. Let us estimate now difference in derivatives.
Define again
$A_1' = (\Lambda_1 , \xi'_1 , a_1 , \theta_1 )$  and $A_2' = (\Lambda_2 , \xi'_2 , a_2 , \theta_2 )$.
Let us fix $j=1$ and define $A_1'= (A_{11} , A_{12} , \ldots , A_{1 3n} )$ the components of the vector
$A_1'$. Let us differential $\phi$ with respect to $A_{1l}$, for some $l=1, \ldots , 3n$.
Differentiating with respect to $A_{1l}$ variables we get form
\equ{e2} that
\begin{eqnarray*}
& & D_{A_{1l}} [ I(A_1 , A_2 ) - J_\ve ( u_0+ \tilde \phi  ) ] =
(1-\zeta )^2
 \int_0^1 t dt ( \int_{\Omega_\ve} D_{A_{1l}}
  [ ( N ( \phi +\psi )
) \phi ] \\
&+&
(p+\ve ) \int_{\Omega_\ve} \nabla_{A_{1l}} [ ( ( v_0+ \psi  +t\phi
)^{p+\ve -1} - v_0^{p+\ve -1} )
    \phi^2 ] ) .
\label{e3} \end{eqnarray*}
Using the computations in the proof of Proposition \ref{nonlin} we get that
$$
D_{A_{1l}} [ I(A_1 , A_2  ) - J_\ve ( u_0 + \tilde \psi  ) ] =
o(\ve).
$$
Now,
$$
 J_\ve ( u_0 +\hat \psi  ) -J_\ve(u_0)   =
(1-\zeta )^2 [ F_\ve ( v_0 +\psi  ) - F_\ve ( v_0 ) ]
=
$$
\be (1-\zeta)^2 \bigl\{
 \int_0^1 (1-t)dt [
 (p+\ve ) \int_{\Omega_\ve} (( v_0+ t\psi  )^{p+\ve -1} - v_0^{p+\ve -1})
\psi ^2 ]
 - 2\int_{\Omega_\ve} E \psi  \bigl\}
\label{nn}\ee where we have used that
  $$
  D F_\ve ( v_0 )[\psi ] = - \int_{\Omega_\ve} E \psi .
 $$
 Arguing as before and taking into account that
 \equ{ee41} holds, we get  \equ{ee5}.
On the other hand, using \equ{nn}, we see that
\begin{eqnarray*}
D_{A_{1l}} [  J_\ve ( u_0 +\hat \psi  ) -J_\ve(u_0)  ] & = &
(1-\zeta )^2 D_{A_{1l}} \bigl\{
 \int_0^1 (1-t)dt \times \\
& &  [
 (p+\ve ) \int_{\Omega_\ve} (( v_0+ t\psi  )^{p+\ve -1} - v_0^{p+\ve -1})
\psi ^2 ]  -
  2\int_{\Omega_\ve} E \psi  \bigl\} \\
& =&
o(\ve) -2 D_{A_{1l}} ( \int_{\Omega_\ve} E
\psi  ) .
\end{eqnarray*}
On the other hand, we have that
$$
D_{A_{1l} } ( \int_{\Omega_\ve} R^\ve \psi  )
= \left\{ \begin{array}{ll}
                         O(\ve^{2-{1 \over n-2} }) &\mbox{ if } n\leq 5 \\
                        O(\ve^{7\over 4} |\log \ve |  ) &\mbox{ if } n=6\\
                        O(\ve^{1+{4\over n-2} - {1\over n-2}}) &\mbox{ if } n\geq 7.
                          \end{array}
                \right.
$$
This concludes the proof of the Proposition.

\end{proof}

\section{The min-max} \label{minmaxsec}

In this section we set up a min-max scheme to find  a critical
point of the function $\Psi$, defined in \equ{psi1}.

We write
$$
\Psi (\Lambda , \xi , a , \theta ) =
\Psi (\Lambda_1 , \Lambda_2 , \xi_1 , \xi_2, a_1 , a_2 , \theta_1 , \theta_2 ),
$$
where
$$
(\Lambda , \xi , a , \theta ) \in \R_+^2 \times (\Omega \times \Omega \setminus \{ \xi_1 = \xi_2 \} )
\times B^2  \times K^2
$$
where
$$
B = \{ (x_1 , x_2 ) \, :\, \sqrt{x_1^2 + x_2^2 } \leq {1\over 2} \} , \quad
$$
and $K$ is a compact manifold of dimension $2n-3$, without boundary.

\medskip
\noindent
Recall that $\Omega = {\mathcal D} \setminus \omega$, where
$\omega \subset \bar B (0, \delta ) \subset {\mathcal D}$.
Define
$$
\varphi (\xi_1 , \xi_2 ) =
H(\xi_1 , \xi_1 )^{1\over 2} \, H(\xi_2  , \xi_2 )^{1\over 2} - G(\xi_1 , \xi_2 ) .
$$
 The following result holds true (see Corollary
2.1, \cite{dfm})

\begin{corollary}
\label{buco} For any (fixed) sufficiently small $\sigma>0$ there
exists $\delta_0>0$ such that for any $\delta\in(0,\delta_0)$  and
for any smooth domain $\omega\subset B(0,\delta)$ it holds
$$\varphi(\xi_1 , \xi_2 ) <0\qquad\forall\  (\xi_1 , \xi_2 ) \in  S,$$
where the manifold $S$ is defined by
$$S=\{(\xi_1,\xi_2)\in\Omega^2\ |\ |x_1|=|x_2|=\sigma\}.$$
\end{corollary}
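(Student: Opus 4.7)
The plan is to compare $\varphi$ on the perforated domain $\Omega=\mathcal{D}\setminus\omega$ with the analogous quantity $\varphi_\mathcal{D}$ built from the Green's function $G_\mathcal{D}$ and Robin function $H_\mathcal{D}$ of $\mathcal{D}$ itself. First I show $\varphi_\mathcal{D}<0$ on $S$ when $\sigma$ is fixed small enough; then I show that excising a small hole $\omega\subset B(0,\delta)$ perturbs $\varphi_\mathcal{D}$ by only $O(\delta^{n-2})$ uniformly on $S$, so the negativity survives. For the first stage, choose $\sigma>0$ so small that $\overline{B(0,2\sigma)}\subset\mathcal{D}$; since $H_\mathcal{D}$ is smooth on $\mathcal{D}\times\mathcal{D}$, there exists $M=M(\sigma)$ with $H_\mathcal{D}(\xi,\xi)\le M$ and $|H_\mathcal{D}(\xi_1,\xi_2)|\le M$ for $\xi,\xi_1,\xi_2\in\overline{B(0,2\sigma)}$, and $M(\sigma)\to H_\mathcal{D}(0,0)<\infty$ as $\sigma\downarrow 0$. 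Using $G_\mathcal{D}=\Gamma-H_\mathcal{D}$ together with $|\xi_1-\xi_2|\le 2\sigma$ on $S$ gives
\[
\varphi_\mathcal{D}(\xi_1,\xi_2) \;=\; \sqrt{H_\mathcal{D}(\xi_1,\xi_1)\,H_\mathcal{D}(\xi_2,\xi_2)} - G_\mathcal{D}(\xi_1,\xi_2) \;\le\; 2M - b_n(2\sigma)^{2-n}
\]
uniformly on $S$. The right-hand side tends to $-\infty$ as $\sigma\downarrow 0$, so I can fix $\sigma$ small enough that $\varphi_\mathcal{D}\le -\eta$ on $S$ for some $\eta>0$.

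With $\sigma$ now fixed, consider any smooth $\omega\subset B(0,\delta)$ with $\delta<\sigma/2$. Set $w(x)=G_\mathcal{D}(x,\xi_2)-G_\Omega(x,\xi_2)$. Since $\Omega\subset\mathcal{D}$, the maximum principle gives $w\ge 0$; moreover $w$ is harmonic in $\Omega$, vanishes on $\partial\mathcal{D}$, and on $\partial\omega\subset B(0,\delta)$ satisfies $w(x)=G_\mathcal{D}(x,\xi_2)\le C(\sigma)$ because $|x-\xi_2|\ge\sigma/2$ there. The explicit barrier $\widetilde W(x)=(\delta/|x|)^{n-2}$ satisfies $\widetilde W\ge 1$ on $\partial\omega$ (since $|x|\le\delta$) and $\widetilde W>0$ on $\partial\mathcal{D}$, hence by the maximum principle $w(x)\le C(\sigma)\,\widetilde W(x)$ throughout $\Omega$. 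Evaluating at $x=\xi_1$ with $|\xi_1|=\sigma$ yields $|G_\Omega(\xi_1,\xi_2)-G_\mathcal{D}(\xi_1,\xi_2)|=O(\delta^{n-2})$ uniformly on $S$, and an identical barrier argument applied to the harmonic extensions in the definition of $H$ gives $|H_\Omega-H_\mathcal{D}|=O(\delta^{n-2})$ uniformly on $S$. Combining, $|\varphi-\varphi_\mathcal{D}|\le C'(\sigma)\,\delta^{n-2}$ on $S$, and choosing $\delta_0$ so small that $C'(\sigma)\delta_0^{n-2}<\eta/2$ gives $\varphi\le -\eta/2<0$ on $S$.

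The only nontrivial technical point is the capacity-type perturbation estimate $|G_\Omega-G_\mathcal{D}|=O(\delta^{n-2})$ (and the analogue for $H$) uniformly over \emph{all} smooth $\omega\subset\overline{B(0,\delta)}$. The barrier $\widetilde W$ handles this cleanly: as soon as $\omega\subset\overline{B(0,\delta)}$ one automatically has $\widetilde W\ge 1$ on $\partial\omega$, so the estimate is insensitive to the shape of the hole and depends only on $\delta$ and $\sigma$. This uniformity is precisely what is required for the corollary to hold simultaneously for every such $\omega$.
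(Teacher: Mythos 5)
Your proof is correct. Note that the paper itself offers no argument for this corollary: it is quoted from Corollary 2.1 of \cite{dfm}, so what you have written is a self-contained substitute, and it follows the natural strategy behind that cited result rather than a genuinely new one: first make $\varphi_{\mathcal D}<0$ on $S$ by balancing the bounded Robin terms against $G_{\mathcal D}(\xi_1,\xi_2)\ge b_n(2\sigma)^{2-n}-O(1)$ for $|\xi_1|=|\xi_2|=\sigma$, then control the effect of the hole uniformly in its shape. Your barrier argument does deliver the key uniformity (the bound depends only on $\delta$ and $\sigma$, not on $\omega$), and it is needed in the right direction, since removing a hole increases $H$ and decreases $G$, hence increases $\varphi$. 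Two small points you should make explicit: (i) since $\omega$ need not contain the origin, $\widetilde W(x)=(\delta/|x|)^{n-2}$ may blow up at an interior point of $\Omega$; this is harmless because $|x|^{2-n}$ is superharmonic across $0$ (equivalently, apply the maximum principle in $\Omega\setminus\{0\}$, where the comparison function $C(\sigma)\widetilde W-w$ tends to $+\infty$ at the puncture), but as written "harmonic barrier in $\Omega$" is not literally accurate; (ii) in passing from the $O(\delta^{n-2})$ bounds on $G_{\mathcal D}-G_\Omega$ and $H_\Omega-H_{\mathcal D}$ (which are in fact the same harmonic function $w$) to $|\varphi-\varphi_{\mathcal D}|\le C'(\sigma)\delta^{n-2}$, you use that the square root is Lipschitz near the values of $H_{\mathcal D}(\xi,\xi)$ on $\{|\xi|=\sigma\}$, i.e. that the Robin function of $\mathcal D$ is bounded below by a positive constant there; this is true (and even the H\"older-$\tfrac12$ estimate $O(\delta^{(n-2)/2})$ would suffice), but it deserves a sentence.
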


For any $\xi = (\xi_1 , \xi_2 ) \in  S $
we let $d(\xi)=(d_1(\xi),d_2(\xi))\in \R_+^2$ be the negative
direction of the quadratic form defining $\Psi$.  We easily see that there is a
constant $c>0$ so that $c<d_1(\xi)d_2(\xi)<c^{-1}$ for all $\xi \in S $.

In the following we will construct a critical point of ``min-max" type of the function $\Psi.$
This construction has similarities with the ones developed in \cite{dfm} and \cite{mp}.
We strat with the observation that the functions
$$
a \to \Psi , \quad \theta \to \Psi
$$
have a maximum respectively in $B^2$ and $K^2$.
Let us now introduce  for $l>0$ and $\rho>0$ the following
manifold
$$ W^l_{ \rho} =\{\xi \in\Omega^2\ |\ \varphi (\xi ) < -l\}\cap V_\rho,$$
where
$$V_\rho=\{(\xi_1,\xi_2)\in\Omega ^2 \ |\ {\rm
dist}(\xi_1,\partial\Omega)>\rho,\ {\rm
dist}(\xi_2,\partial\Omega)>\rho,\ |\xi_1-\xi_2|> \rho\}.
$$
If we take
$\lambda_0=-\max\limits_{\xi \in S} \varphi (\xi )$ and $
\rho_0={\rm dist}(S,\partial \Omega),$ then
 for
any $\rho\in(0,\rho_0)$ and $l\in(0,l_0)$  we have that
$S\subset W^l_\rho.$
Moreover, for any  $R> 1$
\begin{eqnarray}
\label{disu1}
 \max\limits_{\xi \in S , \,  R^{-1} \le r\le R \atop a \in B^2 , \, \theta \in K^2}
 \Psi(r d(\xi ), \xi , a , \theta )> \max\limits_ {x\in S, \,  r=
R^{-1} ,R , \atop a \in \partial (B^2)  , \, \theta \in K^2}\Psi(r d(\xi ), \xi , a , \theta ),
\end{eqnarray}
where $d(\xi )=\big(d_1(\xi ),d_2(\xi )\big)\in\R^2_+$ is  the negative
direction of the quadratic form defining $\Psi$. This is a direct consequence of
 Corollary \ref{buco}.

Now let $A$ and $B$ be fixed numbers defined as follows
\begin{eqnarray}
\label{disu0}
 B &=& \max\limits_{\xi \in S, \,  R^{-1} \le r\le R \atop a \in B^2 , \, \theta \in K^2}
 \Psi(r d(\xi ), \xi , a , \theta ) >A  \nonumber \\
&>&  \max\limits_ {\xi \in S , \,  r=
R^{-1} ,R , \atop a \in  B^2 , \, \theta \in K^2}\Psi(r d(\xi ), \xi , a , \theta ) >
 \max\limits_ {\xi \in S , \,  r=
R^{-1} ,R , \atop a \in \partial B^2 , \, \theta \in K^2}\Psi(r d(\xi ), \xi , a , \theta ).
\end{eqnarray}
There exists $R>0$ large such that for any $l\in(0,l_0)$
it holds
\begin{eqnarray}
\label{disu}
& &B= \max\limits_{\xi \in S, \,  R^{-1} \le r\le R \atop a \in B^2 , \, \theta \in K^2}
 \Psi(r d(\xi ), \xi , a , \theta ) \ge
\max\limits_{\xi \in S, \,  \Lambda \in I  \atop a \in B^2 , \, \theta \in K^2}
 \Psi(\Lambda , \xi , a , \theta )
\nonumber\\
& &
\ge
\max\limits_{\xi \in W^l_\rho, \, \Lambda \in I  \atop a \in B^2 , \, \theta \in K^2}
 \Psi(r d(\xi ), \xi , a , \theta )
> A> \max\limits_ {\xi \in S, \,  r=
R^{-1} ,R , \atop a \in \partial B^2 , \, \theta \in K^2}\Psi(r d(\xi ), \xi , a , \theta ),
\end{eqnarray}
where $I$ is the  hyperbola in $\R^2_+$  defined by
$I=\{\Lambda\in\R^2_+\ |\ \Lambda_1\Lambda_2=1 \}.$
Indeed, for any $\Lambda\in I,$ we have
\begin{eqnarray}
\label{disu2} \Psi(\Lambda , \xi , a , \theta )
\ge-G(\xi_1 ,\xi_2)
\ge -{1\over \rho^{n-2}}\tau >A,
\end{eqnarray}
provided that $R$ is choosen properly.

\begin{lemma}
\label{num.2}

There exist $l_0>0$ and $\rho_0>0$   such that for any
$l\in(0,l_0)$ and $\rho\in(0,\rho_0)$ the function $\Psi$
 satisfies the following property:

 for any sequence $(\Lambda_n , \xi_n , a_n , \theta_n )$  in $  [R^{-1} , R]^2 \times W^l_\rho \times B_2^2 \times K^2$
such that $$\lim\limits_n(\Lambda_n , \xi_n , a_n , \theta_n )=(\Lambda , \xi , a, \theta
)\in\partial( [R^{-1} , R]^2 \times W^l_\rho \times B^2 \times K^2  )$$  and $ \psi(\Lambda_n ,
\xi_n , a_n , \theta_n ) \in[A,B]$ there exists a vector $T$ tangent to $\partial( [R^{-1} , R]^2 \times W^l_\rho \times B^2 \times K^2  )$ at $(\xi , \Lambda , a , \theta),$ such that
$$\nabla \Psi(\xi , \Lambda , a , \theta )\cdot T\ne0.$$
\end{lemma}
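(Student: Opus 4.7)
Plan.

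I would argue case-by-case on the natural stratification of $\partial([R^{-1},R]^2\times W^l_\rho\times B^2\times K^2)$; since $K^2$ has empty boundary it contributes nothing. Several strata are ruled out by showing that the hypothesis $\Psi\in[A,B]$ actually fails there, for the $R$ fixed in \equ{disu} and for $l_0,\rho_0$ sufficiently small; on the remaining strata I must produce a tangent vector $T$ with $\nabla\Psi\cdot T\ne 0$.

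For the $\Lambda$-extreme stratum $\{\Lambda_i\in\{R^{-1},R\}\}$ and the ``exterior'' $\xi$-strata $\{\mathrm{dist}(\xi_i,\partial\Omega)=\rho\}$, $\{|\xi_1-\xi_2|=\rho\}$, I would exploit the explicit formula \equ{psi1}. As $\Lambda_i\to 0$ the $\log\Lambda_1\Lambda_2$ term dominates and drives $\Psi\to -\infty$; as $\Lambda_i\to\infty$ the positive quadratic $\tfrac12 H_iQ_i^2\Lambda_i^2$ drives $\Psi\to +\infty$, using the uniform lower bounds $H(\hat\xi_i,\hat\xi_i)\ge c>0$ on $V_\rho$ and $Q(\hat a_i)\ge\eta>0$ from \equ{at00}. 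Analogously, $H(\hat\xi_i,\hat\xi_i)\to+\infty$ as $\mathrm{dist}(\xi_i,\partial\Omega)\to 0$ (while $G(\hat\xi_1,\hat\xi_2)\to 0$), forcing $\Psi\to +\infty$; and $G(\hat\xi_1,\hat\xi_2)\to+\infty$ as $|\xi_1-\xi_2|\to 0$ (with $H_i$ bounded), so the cross term $-GQ_1Q_2\Lambda_1\Lambda_2$ sends $\Psi\to -\infty$. In each of these cases $\Psi$ leaves $[A,B]$, contradicting the hypothesis, so these strata cannot occur as limit points.

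The first genuinely nontrivial stratum is $\{\varphi(\xi)=-l\}$. Here I would compute the $\Lambda$-critical equations
\[
H_iQ_i^2\Lambda_i - GQ_1Q_2\Lambda_{3-i} + \Lambda_i^{-1} = 0,\qquad i=1,2,
\]
multiply by $\Lambda_i$, and set $s:=GQ_1Q_2\Lambda_1\Lambda_2$. A short elimination gives $s^2H_1H_2=G^2(s-1)^2$, hence $s=G/(-\varphi)$. On $\{\varphi=-l\}$ this forces $\Lambda_1\Lambda_2\sim 1/l$ and $\Lambda_i\sim l^{-1/2}$, which exceeds $R$ as soon as $l_0<R^{-2}$. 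Therefore $\nabla_\Lambda\Psi$ cannot vanish at any point of this stratum within $[R^{-1},R]^2$, and $T=\partial_{\Lambda_i}$ is an admissible tangent direction (we are only on the $\xi$-boundary, so all $\Lambda$-directions remain free) satisfying $\nabla\Psi\cdot T\ne 0$.

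Finally, on $\{|a_i|=1/2\}$ I first try $T=\partial_{\Lambda_i}$; if $\nabla_\Lambda\Psi=0$ at the limit, combining the two $\Lambda$-equations yields $H_1Q_1\Lambda_1^2-GQ_2\Lambda_1\Lambda_2=-Q_1^{-1}\ne 0$, whence
\[
\nabla_{a_1}\Psi = -Q_1^{-1}\, R_{\theta_1}^{\top}\nabla Q(\hat a_1).
\]
Its component tangent to the circle $\{|a_1|=1/2\}$ is nonzero unless $\nabla Q(\hat a_1)$ is exactly radial at $\hat a_1$. Using the non-degenerate quadratic expansion \equ{at0} of $Q$ at the origin (after slightly shrinking the radius $1/2$ in $B$ if necessary, preserving \equ{at00}), the leading radial part is nondegenerate but its higher-order corrections generically produce a tangential contribution. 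For the residual symmetric configurations where this also vanishes, I would use a tangent direction in the rotational parameter $\theta_1$: varying $\theta_1$ moves $\hat\xi_1=R_{\theta_1}\xi_1$ along a nontrivial orbit in $\hat S$, hence nontrivially perturbs $H(\hat\xi_1,\hat\xi_1)$ and $G(\hat\xi_1,\hat\xi_2)$, which are not rotation-invariant in $\Omega$. I expect this last stratum to be the main obstacle: a clean argument requires combining \equ{at0}, the discrete $2\pi/k$-symmetry \equ{sim00} of $Q$, and the genericity of $\xi_i\in\Omega$, and the bookkeeping becomes somewhat case-heavy, in contrast to the $\{\varphi=-l\}$ stratum where the quadratic-form algebra resolves matters at once.
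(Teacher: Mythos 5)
Your plan breaks down precisely on the strata where the real content of the lemma lies, namely $\{\mathrm{dist}(\xi_i,\partial\Omega)=\rho\}$ and $\{|\xi_1-\xi_2|=\rho\}$. You discard them by claiming $\Psi\to+\infty$ (resp.\ $-\infty$) there, with the parenthetical ``$G(\hat\xi_1,\hat\xi_2)\to 0$''. But on $\overline{W^l_\rho}$ one has $\varphi\le -l$, i.e.\ $G\ge \sqrt{H_1H_2}+l$, so whenever $H(\hat\xi_i,\hat\xi_i)$ is large the cross term $-G\,Q_1Q_2\Lambda_1\Lambda_2$ is large of the opposite sign at the same order (and on $|\xi_1-\xi_2|=\rho$ the $H_i$ need not stay bounded either); moreover these strata sit at a \emph{fixed} distance $\rho$, not in a limit, and by taking $\Lambda$ near the null cone of the quadratic form --- which is indefinite on $W^l_\rho$ since $\varphi<0$ --- one gets points on them with $\Psi\in[A,B]$. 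So these strata cannot be excluded: a tangent vector must be exhibited. This is exactly what the paper does: if $\nabla_\Lambda\Psi\ne 0$ take $T$ along a $\Lambda$-direction (interior, hence tangent); if $\nabla_\Lambda\Psi=0$, eliminate $\Lambda$ (the same computation you perform on $\{\varphi=-l\}$), which shows that $\Psi$ reduces to a strictly monotone function of $\varphi(\hat\xi_1,\hat\xi_2)$ alone, and then invoke Lemma \ref{phi} of \cite{dfm}, giving a vector tangent to $\partial(\Omega_\rho\times\Omega_\rho)$ along which $\varphi$, hence $\Psi$, has nonzero derivative, with $\rho$ independent of the level $c<0$. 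This reduction-plus-tangential-nondegeneracy-of-$\varphi$ mechanism is absent from your proposal. The same caveat applies to your $\Lambda$-boundary argument: the claim that the positive squares force $\Psi\to+\infty$ as $\Lambda_i\to\infty$ is false on $W^l_\rho$, where the form has a negative direction (this is the whole point of Corollary \ref{buco}); the paper's exclusion of that stratum rests on $|\Psi|$ leaving $[A,B]$ for $R$ large, not on a sign.

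Second, you leave the stratum $\{|a_i|=1/2\}$ unresolved by your own admission (the appeal to genericity and the ``residual symmetric configurations''), whereas the paper disposes of it by the observation that $a\mapsto\Psi$ attains its maximum in the interior of $B^2$, so that $\nabla_a\Psi\ne 0$ when $|a_i|=1/2$ and one may take $T=\nabla_a\Psi$. On the positive side, your elimination on $\{\varphi(\xi)=-l\}$ --- giving $s=G/(-\varphi)$, so that $\nabla_\Lambda\Psi=0$ would force $\Lambda_i\gtrsim l^{-1/2}>R$ once $l_0$ is small --- is correct and in fact more explicit than what the paper writes for that piece of the boundary; but as it stands the proposal establishes the lemma only there, and the two gaps above are genuine.
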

\begin{proof}
First of all we observe  that $\Lambda_n$ is component-wise bounded
from below and from above by a positive constant. In fact, if
$|\Lambda_n|\to+\infty$ and $|\Lambda_n|\to 0$ then $|\psi(\Lambda_n , x_n)|\to+\infty$, which is impossible. Thus we have that $\Lambda \not\in \{ R^{-1} , R \}$. Furthermore, since $a \to \Psi$ has a maximum in $B$, if $a \in \partial B^2$, then $\nabla_a \Psi \not=0$, and we can choose $T=\nabla_a \Psi$.
Similarly, if $\nabla_\Lambda\psi(\Lambda , x)\ne 0,$ then $T$ can be chosen
parallel to $\nabla_\Lambda\psi(\Lambda , x)$. Then assume that
$\nabla_\Lambda\psi(\Lambda , x)= 0,$
$ \Lambda$ satisfies
$$
\Lambda_1^2 = -{H(\hat \xi_2 ,\hat \xi_2)^{1/2}  \over
H(\hat \xi_1 , \hat \xi_1)^{1/2} \varphi (\hat \xi_1,\hat \xi_2)},
\quad  \Lambda_2^2 = -{H(\hat \xi_1 , \hat \xi_1)^{1/2}  \over
H(\hat \xi_2 , \hat \xi_2)^{1/2} \varphi (\hat \xi_1,\hat \xi_2)},
$$
and $\hat \xi$ satisfies $\varphi(\hat \xi)<0$. Substituting back
in $\Psi$, we get
$$ \Psi= - {1\over 2} + {1\over 2} \log {1\over
|\varphi (\hat \xi_1 , \hat \xi_2)|}.
$$
Thus we conclude the proof of the Lemma, using the following result proved in \cite{dfm}.

\begin{lemma}
\label{phi} Given $c<0 $ there exists a sufficiently small number
$\rho
>0$ with the following property: If $(\bar \xi_1, \bar \xi_2) \in
\partial( \Omega_\rho\times  \Omega_\rho)$ is such that $\varphi (\bar \xi_1, \bar
\xi_2) =  c$, then there is a vector ${\bf \tau}$, tangent to
$\partial ( \Omega_\rho\times  \Omega_\rho)$   at the point $(\bar
\xi_1, \bar \xi_2)$, so that \be \nabla \varphi (\bar \xi_1, \bar
\xi_2)\cdot {\bf \tau} \ne 0 .\label{rhorho} \ee The number $\rho$
does not depend on $c$.
\end{lemma}
\end{proof}

We now have the tools to show the validity of the following fact

\begin{proposition}
\label{liv-cri} There exists   a critical level for $\Psi $
between $A$ and $B.$
\end{proposition}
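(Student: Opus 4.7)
The plan is to realize the claimed critical value as a saddle level produced by a linking--type min--max scheme on the finite-dimensional compact set
\[
D = [R^{-1}, R]^2 \times \overline{W^l_\rho} \times \overline{B^2} \times K^2,
\]
built around the linking ``sphere''
\[
\Sigma = \{(rd(\xi), \xi, a, \theta) : r \in [R^{-1}, R],\ \xi \in S,\ a \in B^2,\ \theta \in K^2\}
\]
with relevant boundary $\Sigma_0 = \Sigma \cap \{r \in \{R^{-1}, R\}\}$. I would define the admissible class
\[
\Gamma = \{\eta \in C(\Sigma, D) : \eta|_{\Sigma_0} = \mathrm{id}_{\Sigma_0}\}
\]
and the candidate critical value
\[
c = \inf_{\eta \in \Gamma}\ \max_{u \in \Sigma} \Psi(\eta(u)).
\]

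The first step is to establish $A \le c \le B$. Taking $\eta = \mathrm{id}$ and using \equ{disu0} yields $c \le B$. For the lower bound, fix any $\eta \in \Gamma$ and any $(\xi, a, \theta) \in S \times B^2 \times K^2$, and consider the continuous map
\[
r \in [R^{-1}, R] \longmapsto \pi_\Lambda \eta(rd(\xi), \xi, a, \theta),
\]
where $\pi_\Lambda$ is projection onto the $\Lambda$--coordinate; its values at the endpoints $r = R^{-1}$ and $r = R$ are $R^{-1}d(\xi)$ and $Rd(\xi)$, so the scalar quantity $\Lambda_1 \Lambda_2$ along this curve varies continuously from $R^{-2} d_1(\xi)d_2(\xi) < 1$ to $R^{2} d_1(\xi)d_2(\xi) > 1$ (using the uniform pinching $c < d_1 d_2 < c^{-1}$ on $S$ and the largeness of $R$ fixed in \equ{disu}). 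The intermediate value theorem produces $r^\star \in (R^{-1}, R)$ whose image under $\eta$ lies on the cross-section $\{\Lambda_1 \Lambda_2 = 1\} \cap D$, on which \equ{disu2} yields $\Psi \ge A$; hence $\max_\Sigma \Psi \circ \eta \ge A$, so $c \ge A$.

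The second step is to rule out the possibility that $c$ is a regular value. Assume for contradiction that $\Psi$ has no critical point in $\Psi^{-1}([c-\delta, c+\delta]) \cap D$ for some $\delta > 0$ small enough that $c - \delta > \max_{\Sigma_0} \Psi$. Lemma \ref{num.2} supplies, at every boundary point of $D$ with $\Psi \in [A, B]$, a tangent vector $T$ to $\partial D$ with $\nabla \Psi \cdot T \ne 0$; at every interior point with $\Psi \in [c-\delta, c+\delta]$, the direction $-\nabla \Psi$ itself serves as a descent. A standard partition-of-unity construction then produces a locally Lipschitz pseudo-gradient field $V$ on a neighborhood of $\Psi^{-1}([c-\delta, c+\delta]) \cap D$, tangent to $\partial D$, satisfying $\langle \nabla \Psi, V \rangle \ge \kappa > 0$, and supported in $\Psi^{-1}([c-\delta, c+\delta])$. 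The flow $\phi_t$ of $-V$ preserves $D$, fixes $\Sigma_0$ (which lies below the support of $V$ for small $\delta$), and for some $t_0 > 0$ maps $\Psi^{-1}([c-\delta/2, c+\delta/2]) \cap D$ into $\Psi^{-1}((-\infty, c-\delta/2])$. A near-minimizing $\eta \in \Gamma$ with $\max_\Sigma \Psi \circ \eta \le c + \delta/4$ then gives $\phi_{t_0} \circ \eta \in \Gamma$ with $\max_\Sigma \Psi(\phi_{t_0} \circ \eta) \le c - \delta/2$, contradicting the definition of $c$.

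The hard part is this second step: constructing a descent vector field that is simultaneously tangent to all the mixed boundary strata and corners of the multi-faced domain $D$ (where $\partial[R^{-1}, R]^2$, $\partial W^l_\rho$ and $\partial B^2$ may interact). Lemma \ref{num.2} is precisely tailored to this, reducing the task to the pointwise existence, at every boundary point of $D$ with $\Psi \in [A, B]$, of a tangent direction of nonzero directional derivative; once that pointwise property is in hand, convex combinations and partitions of unity globalize it in the usual way.
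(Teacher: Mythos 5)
Your argument is correct in substance and exploits the same linking geometry, but it routes the two key steps differently from the paper. The paper does not introduce a min--max value: it first checks a Palais--Smale condition for $\Psi$ on $\R^2_+\times W^l_\rho\times B^2\times K^2$, assumes there is no critical value in $[A,B]$, obtains a global deformation of $\Psi^B$ onto a sublevel $\Psi^{A'}$ which fixes $\Psi^{A'}$ pointwise, and then derives the contradiction cohomologically: if the deformed set $\eta(1,{\mathcal A})$ missed the cross-section ${\mathcal C}$ over the hyperbola $I$, the set ${\mathcal B}$ would be empty, whereas the projection argument of Lemma 7.1 of \cite{dfm} (the map $\pi^*:H^*({\mathcal S})\to H^*({\mathcal U})$ is a monomorphism and $H^*({\mathcal S})\ne\{0\}$) forces ${\mathcal B}\ne\emptyset$. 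You instead set up $c=\inf_\Gamma\max_\Sigma \Psi\circ\eta$ on the compact box $D$ (so no Palais--Smale discussion is needed) and replace the cohomological step by an intermediate value theorem on $\Lambda_1\Lambda_2$ along the curves $r\mapsto\eta(rd(\xi),\xi,a,\theta)$; this elementary degree-type argument is available precisely because your admissible maps fix $\Sigma_0$ pointwise, which is the same structural fact the paper uses when its deformation fixes $\partial{\mathcal A}\subset\Psi^{A'}$, so for the bare existence of a critical level your route is sufficient and lighter, while the cohomological machinery of \cite{dfm} buys robustness that is not needed here. Both proofs lean on Lemma \ref{num.2} in the same way, namely to build a descent field tangent to the boundary so the flow stays in the constraint region. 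Two points you should tighten: by compactness of $\{\Lambda_1\Lambda_2=1\}\cap D$ and the strict inequality in \equ{disu2} you in fact get $c>A$, and you need this (together with the strict inequalities in \equ{disu0}, which leave room to enlarge $B$ slightly in case $c=B$) to ensure $[c-\delta,c+\delta]$ stays inside the window $[A,B]$ where Lemma \ref{num.2} controls boundary points; and at corner points of $D$ the pseudo-gradient must be tangent to every active stratum, a matter which Lemma \ref{num.2}, like the paper itself, treats only implicitly.
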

\begin{proof} First we claim that the function $\Psi$ constrained to $\R^2_+ \times
W^l_\rho \times B_2^2 \times K^2 $ satisfies the Palais-Smale condition in $[A,B]$.
Indeed, let  $(\Lambda_n , \xi_n , a_n , \theta_n )$  in $ \R^2_+ \times W^l_\rho \times B^2 \times K^2 $ be
such that $\lim\limits_n\Psi(\Lambda_n , \xi_n , a_n , \theta_n  )\in [A , B]$ and
$\lim\limits_n\nabla\Psi(\Lambda_n , \xi_n , a_n , \theta_n )=0.$ Arguing as in the
proof of Lemma \ref{num.2} it can be shown that $\Lambda_n$
remains bounded component-wise from above and below by a positive
constant.

Assume now by contradiction that there are no critical
levels in the interval $[A,B].$ We can define an appropriate
negative gradient flow that will remain in $ [R^{-1} , R]^2 \times
W^l_\rho \times B_2^2 \times K^2 $  at any level $c\in[A,B].$ Moreover the Palais-Smale
condition holds in $[A,B]$. Hence there exists a continuous
deformation
$$\eta:[0,1]\times\Psi^B\to\Psi^B$$ such that for some $A'\in(0,A)$
\begin{eqnarray*}
& &\eta(0,u)=u\qquad\forall\ u\in\Psi^B\\
& &\eta(t,u)=u\qquad\forall\ u\in\Psi^{A'}\\
& &\eta(1,u) \in\Psi^{A'}.\\
\end{eqnarray*}
Let us call
$${\mathcal A}=\{(\Lambda, \xi , a , \theta )\in \R^2_+\times W^l_\rho \times B^2 \times K^2  \ |\ \xi \in S,\ \Lambda=rd(\xi ),\ R^{-1} \le r\le R\},$$
$$\partial {\mathcal A}=\{(\Lambda, \xi , a , \theta )\in \R^2_+\times W^l_\rho  \times B^2 \times K^2  \ |\ \xi \in S,\ \Lambda=R^{-1} \ or\ \Lambda=Rd(\xi )\},$$
$${\mathcal C}= I_\tau \times W^l_\rho.$$
>From (\ref{disu})  we deduce that  ${\mathcal A} \subset\Psi^B,$
$\partial {\mathcal A} \subset\Psi^{A'}$ and $\Psi^{A'}\cap{\mathcal
C}=\emptyset.$ Therefore
\begin{eqnarray}
& &\eta(0,u)=u\qquad\forall\ u\in {\mathcal A},\nonumber\\
& &\eta(t,u)=u\qquad\forall\ u\in\partial {\mathcal A},\nonumber\\
& &\eta(1,{\mathcal A})\cap {\mathcal C}=\emptyset.
\end{eqnarray}
For any $(\Lambda, \xi , a , \theta )\in  {\mathcal A}$ and for any $t\in[0,1]$ we
denote
$$\eta\big(t,(\Lambda, \xi , a ,
\theta )\big)=
\big(\widetilde \Lambda , \widetilde \xi , \widetilde a , \widetilde \theta
\big)\in \R^2_+ \times W^l_\rho \times B^2 \times K^2.$$
 We define the set
$$ {\mathcal B}  =\{(\Lambda, x)\in{\mathcal A}\ |\ \widetilde \Lambda  \in I \}.$$
Since $\eta(1,{\mathcal A})\cap {\mathcal C}=\emptyset $ it holds $ {\mathcal
B}  =\emptyset .$ Now let  ${\mathcal U}$  be a neighborhood of $
{\mathcal B} $ in $\R_+^2 \times W_\rho^l \times B^2 \times K^2$ such that
 $H^*({\mathcal U})= H^*( {\mathcal B} ).$
If
 $\pi:{\mathcal U}\to {\mathcal S}$ denotes the projection, arguing like in
  Lemma 7.1 of \cite{dfm} we can show that
$$ \pi^*:H^*({\mathcal S})\to H^*({\mathcal U})\quad\hbox{
is a monomorphism.}
$$
This  condition provides a contradiction, since $H^*({\mathcal U})=
\{0\} $ and  $H^*({\mathcal S})\ne\{0\}.$ \end{proof}

\newpage

\section{Appendix}\label{A}

\noindent
To give a first description of these solutions, let us introduce some notations.
Fix an integer $k$. For any integer $l=1, \ldots , k$, we define angles $\theta_l$ and vectors $\textsf{n}_l$, $\textsf{t}_l$ by
\begin{equation}
\label{defanglespoints}
\theta_l = {2\pi \over k} \, (l-1), \quad \textsf{n}_l = (\cos \theta_l , \sin \theta_l, \textsf{0} ), \quad \textsf{t}_l = (-\sin \theta_l , \cos \theta_l , \textsf{0}).
\end{equation}
Here $\textsf{0}$ stands for the zero vector in $\R^{n-2}$. Notice that  $\theta_1=0$, $\textsf{n}_1 = (1, 0, \textsf{0})$, and  $\textsf{t}_1 = (0, 1,\textsf{0})$.

\medskip
\noindent
In  \cite{dmpp1}  it was proved that there exists $k_0$ such that for all integer $k>k_0$
there exists a solution $Q= Q_k$ to \equ{eqqq} that can be described as follows
\begin{equation}
\label{sol}
Q_k(x) =U_* (x) +\tilde  \phi (x).
\end{equation}
where
\begin{equation}
\label{defU*}
U_* (x) =  U(x) - \sum_{j=1}^k U_j (x),
\end{equation}
while $\tilde \phi$ is smaller than $U_*$.
The functions $U$ and  $U_j$ are positive solutions to \equ{eqqq}, respectively defined as
\begin{equation}
\label{basic}
U(x) = \gamma \left( {2 \over 1+ |x|^2} \right)^{n-2 \over 2}, \quad U_j (x) = \mu_k^{-{n-2 \over 2}} U(\mu_k^{-1} (x-\xi_j )),
\end{equation}
where $\gamma = \left[ { n (n-2) \over 4} \right]^{n-2 \over 4}.$
For any integer $k$ large, the parameters $\mu_k  >0$ and the $k$ points $\xi_l$, $l=1, \ldots , k$
are given by
\begin{equation} \label{parameters}
\left[  \sum_{l > 1}^k {1 \over (1-\cos \theta_l )^{n-2 \over 2}} \right] \, \mu_k^{n-2 \over 2} =  \left(1+ O ({1\over k}) \right) , \quad {\mbox {for}} \quad k \to \infty
\end{equation}
in particular $ \mu_k \sim k^{-2}$ if $n\geq 4$, and $\mu_k \sim k^{-2} |\log k|^{-2}$ if $n=3$, as $k \to \infty$, and
\begin{equation} \label{parameters1}
 \xi_l = \sqrt{1-\mu^2} \, ( \textsf{n}_l , 0).
\end{equation}

The function $ \tilde \phi $ in \equ{sol}  can be further decomposed. Let us introduce  some cut-off functions $\zeta_j$
to be defined as follows.
Let $\zeta(s)$ be a smooth function such that $\zeta(s) = 1$ for $s<1$ and $\zeta(s)=0$ for $s>2$. We also let $\zeta^-(s) = \zeta(2s)$.
Then we set
$$
 \zeta_j(y) = \left\{ \begin{matrix}   \zeta(\, k\eta^{-1} |y|^{-2} |( y  -\xi|y|)\,  |\, )  & \hbox{ if } |y|> 1\, , \\ & \\     \zeta{ (\,k \eta^{-1}\,|y-\xi|\, )}   & \hbox{ if } |y|\le  1\, , \end{matrix}
  \right. \
$$
in such a way that that
$$
\zeta_j( y) = \zeta_j( y/|y|^2).
$$
%We consider in addition
%the cut-off functions $\zeta_j^-(y)$, defined as above with $\zeta$ replaced by $\zeta^-$.
The function $\tilde \phi$  has the form
\begin{equation} \label{decphi}
\tilde  \phi\  =\  \sum_{j=1}^k  \tilde \phi_j  + \psi.
\end{equation}
In the decomposition \equ{decphi} the functions $\tilde \phi_j$, for $j>1$, are defined in terms of $\tilde \phi_1$
\begin{equation}
\tilde  \phi_j (\bar y, y')= \tilde \phi_1( e^{\frac{2\pi j} k i} \bar y, y'), \quad j=1,\ldots, k-1.
\label{sim11}\end{equation}
We have that
\begin{equation}
\label{estpsi}
 \| \psi \|_{n-2} \leq C k^{1-{n\over q}} \quad {\mbox{if}} \quad n\geq 4, \quad  \| \psi \|_{n-2} \leq {C \over \log k} \quad {\mbox{if}} \quad n=3,
\end{equation}
where
\begin{equation}
 \|\phi\|_{n-2} :=
\|\,(1+ |y|^{n-2}) \phi \, \|_{L^\infty(\R^n)} \ .
\label{nomstar1}\end{equation}
On the other hand, if we rescale and translate the function $\tilde \phi_1$
\begin{equation}
\label{phiriscalata}
\phi_1 (y) =\mu^{n-2 \over 2}  \tilde \phi_1 (\xi_1 + \mu y )
\end{equation}
we have the validity of the following estimate for $\phi_1$
\begin{equation}
\label{estphi1}
\|  \phi_1 \|_{n-2} \leq C k^{-{n\over q}}  \quad {\mbox {if}} \quad n\geq 4, \quad \|  \phi_1 \|_{n-2} \leq {C \over k \log k}  \quad {\mbox {if}} \quad n= 3.
\end{equation}

In terms of the function $ \tilde \phi$ in the decomposition \equ{sol}, equation \equ{eqqq} gets re-written as
\begin{equation}
\Delta \tilde  \phi +  p\gamma|U_*|^{p-1}\tilde \phi +   E + \gamma  N(\tilde \phi) = 0
\label{eq1}\end{equation}
where $E$ is defined
by
\begin{equation}
 \label{error}
E= \Delta U_* + f(U_* )
\end{equation}
and
$$
N(\phi) =  |U_* + \phi |^{p-1} (U_* + \phi)  - |U_*  |^{p-1} U_* - p|U_*|^{p-1} \phi.
$$
One has a precise control of the size of the function $E$ when measured for instance in the following norm.
Let us fix a number $q$, with $ \frac n2 < q < n$, and consider the weighted $L^q$ norm
\begin{equation}
\|h\|_{**} =  \|  \, (1+ |y|)^{{n+2} - \frac {2n} q} h \|_{L^q(\R^n)}.
\label{nomstar2} \end{equation}

In  \cite{dmpp1} it is proved that there exist an integer $k_0$ and a positive constant $C$ such that for all $k\geq k_0$ the following estimates
hold true
\begin{equation}
\label{rivoli4}
\| E \|_{**} \leq C k^{1-{n\over q}} \quad {\mbox {if}} \quad n\geq 4, \quad
\| E \|_{**} \leq {C \over \log k}  \quad {\mbox {if}} \quad n=3
\end{equation}

\medskip

To be more precise, we have  estimates for the $\| \cdot \|_{**}$-norm of the error term $E$
 in the {\it exterior region} $\bigcap_{j=1}^k \{ |y-\xi_j| > {\eta \over k}  \} $, and also in the {\it interior regions} $ \{ |y-\xi_j| < {\eta \over k}  \} $, for any $j=1, \ldots , k$. Here $\eta >0$ is a positive and small constant,  independent of $k$.

\medskip

\medskip
\noindent {\it In the exterior region}. We have
$$
\|  \, (1+ |y|)^{{n+2} - \frac {2n} q} E (y)  \|_{L^q(\bigcap_{j=1}^k \{ |y-\xi_j| > {\eta \over k}  \})} \leq C k^{1-{n\over q}}
$$
if $n\geq 4$, while
$$
\|  \, (1+ |y|)^{{n+2} - \frac {2n} q} E (y)  \|_{L^q(\bigcap_{j=1}^k \{ |y-\xi_j| > {\eta \over k}  \})} \leq {C \over \log k}
$$
if $n=3$.

\medskip
\noindent {\it In the interior regions}.
Now, let $|y-\xi_j| < {\eta \over k} $ for some $j \in \{ 1, \ldots , k\}$ fixed.
It is convenient to measure the error after a change of scale.
Define
$$
\tilde E_j(y) :=  \mu^{\frac {n+2}2} E ( \xi_j + \mu y ) , \quad |y| < \frac \eta {\mu  k}
$$
We have
$$
\|  \, (1+ |y|)^{{n+2} - \frac {2n} q}\tilde E_j (y) \|_{L^q( |y-\xi_j| <{\eta \over \mu k })  } \leq C k^{-{n\over q}} \quad {\mbox {if}} \quad n\geq 4
$$
and
$$
\|  \, (1+ |y|)^{{n+2} - \frac {2n} q}\tilde E_j (y) \|_{L^q( |y-\xi_j| <{\eta \over \mu k })  } \leq {C \over k \log k} \quad {\mbox {if}} \quad n=3.
$$
We refer the readers to \cite{dmpp1}.

\medskip
\noindent

Let us now define the following functions
\begin{equation}
\label{ang2}
 \begin{array}{rr}
\pi_\alpha (y)  = {\partial \over \partial y_\alpha } \tilde  \phi (y)  ,  & \quad   {\mbox {for}} \quad \alpha = 1, \ldots , n; \\
 & \\
 \pi_0 (y)   ={n-2 \over 2}\tilde  \phi (y) + \nabla \tilde \phi (y) \cdot y.  &
\end{array}
\end{equation}
In the above formula $\tilde \phi$ is the function defined in \equ{sol} and described in \equ{decphi}.
Observe that the function $\pi_0 $ is even in each of its variables, namely
$$
\pi_0 ( y_1, \ldots , y_j , \ldots , y_n) = \pi_0 ( y_1, \ldots ,- y_j , \ldots , y_n)\quad \forall j=1, \ldots , n,
$$
while $\pi_\alpha$, for $\alpha = 1, \ldots , n$ is odd in the $y_\alpha$ variable, while it is even in all the other variables. Furthermore, all functions $\pi_\alpha$ are invariant under rotation of ${2\pi \over k}$ in the first two coordinates, namely they satisfy \equ{sim00}.
The functions $\pi_\alpha$ can be further described, as follows.

The functions $\pi_\alpha$ can be decomposed into
\begin{equation}
\label{martes4}
\pi_\alpha (y) = \sum_{j=1}^k \tilde \pi_{\alpha , j} (y) + \hat \pi_\alpha (y)
\end{equation}
where
$$
\tilde \pi_{\alpha ,  j} (y) = \tilde \pi_{\alpha , 1} (e^{ {2\pi \over k} \, j \, i} \bar y , y').
$$
Furthermore, there exists a positive constant $C$ so that
$$
\| \hat \pi_0 \|_{n-2} \leq C k^{1-{n\over q}}, \quad \| \hat \pi_j \|_{n-1} \leq C k^{1-{n\over q}}, \quad j=1, \ldots , k,
$$
if $n\geq 4$, and
$$
\| \hat \pi_0 \|_{n-2} \leq { C \over \log k} , \quad \| \hat \pi_j \|_{n-1} \leq  { C \over \log k}, \quad j=1, \ldots , k,
$$
if $n=3$. Furthermore,  if we denote $\pi_{\alpha , 1} (y) = \mu^{n-2 \over 2} \tilde \pi_{\alpha , 1} (\xi_1 + \mu y )$, then
$$
\|  \pi_{0 , 1}  \|_{n-2} \leq C k^{-{n\over q}}, \quad \|  \pi_{\alpha , 1}  \|_{n-1} \leq C k^{-{n\over q}}, \quad \alpha=1, \ldots , n
$$
if $n\geq 4$, and
$$
\|  \pi_{0 , 1}  \|_{n-2} \leq { C \over k \log k} , \quad \|  \pi_{\alpha , 1}  \|_{n-1} \leq C { C \over k \log k}, \quad \alpha=1, \ldots , 3
$$
if $n = 3$.

For further details we refer the interested reader to
 \cite{dmpp1}.

\end{document}